\newtheorem{thm}{Theorem}[section]
\newtheorem{defi}[thm]{Definition}
\newtheorem{prop}[thm]{Proposition}%[section]
\newtheorem{conj}[thm]{Conjecture}%[section
\newtheorem{lemma}[thm]{Lemma}%[section]
\newtheorem{cor}[thm]{Corollary}%[section]
\newcommand{\Alg}{\mbox{{\rm Alg}}}
\newcommand{\SZ}{{\mathcal{Z}}^{\Delta}}
\newcommand{\Z}{\Bbb Z}
\newcommand{\C}{\Bbb C}
\newcommand{\R}{\Bbb R}
\newcommand{\K}{\Bbb K}
\newcommand{\RP}{\Bbb R\mbox{{\rm P}}}
\newcommand{\Map}{\mbox{{\rm Map}}}
\newcommand{\Rat}{\mbox{{\rm Rat}}}
\newcommand{\CP}{\Bbb C {\rm P}}
\newcommand{\dis}{\displaystyle}
\newcommand{\p}{\prime}
\newcommand{\E}{\tilde{E}}
\newcommand{\I}{\mbox{{\rm (i)}}}
\newcommand{\II}{\mbox{{\rm (ii)}}}
\newcommand{\III}{\mbox{{\rm (iii)}}}
\newcommand{\mo}{\mbox{{\rm (mod $2$)}}}
\numberwithin{equation}{section}
\begin{document}

% \title[short text for running head]{full title}
\title[Spaces of algebraic maps]
{Spaces of algebraic maps from real projective spaces into complex projective spaces
}

%    Only \author and \address are required; other information is
%    optional.  Remove any unused author tags.

%    author one information
% \author[short version for running head]{name for top of paper}
\author{Andrzej Kozlowski}
\address{Tokyo Denki Univesity, Inzai, Chiba 270-1382 Japan}
\curraddr{}
\email{akoz@mimuw.edu.pl}
\thanks{}

%    author two information
\author{Kohhei Yamaguchi}
\address{The University of Electro-Communications,  Chofu Tokyo 182-8585 Japan}
\curraddr{}
\email{kohhei@im.uec.ac.jp}
\thanks{}

\subjclass[2000]{Primary 55P35, 55P10; Secondly 55P91.
\\
\hspace{6.5mm}{\it Key words and phrases.}  Spaces of algebraic maps, equivariant homotopy equivalence}

\date{}

%%%(Abstract)%%%%%%%
\begin{abstract}
We study the homotopy types of  spaces of algebraic (rational) maps  
from real projective spaces 
into complex projective spaces.
We showed in \cite{AKY1} that in this setting
the inclusion of the space of rational maps into the space of all continuous maps is
a homotopy equivalence. 
%%%%
 In this paper
we prove that  the homotopy types of the terms of the natural  \lq degree\rq\   filtration approximate closer 
and closer  the homotopy type
of the space of continuous maps  and obtain bounds that describe the closeness of the approximation in terms of the degree. 
%Complex conjugation induces the $\Z/2$-actions on the space of algebraic maps and that of continuous maps
%and we also prove the equivariant result for the above inclusion map.
%%
Moreover, we compute low dimensional homotopy groups  of these spaces.
These results combined with those of \cite{AKY1} can be formulated as a single statement about $\Bbb Z/2$-equivariant homotopy equivalence between 
these spaces,
%the spaces of 
%algebraic and continuous maps from real to complex projective spaces, 
where the $\Bbb Z/2$-action is induced by the complex conjugation.  This generalizes a theorem of \cite{GKY2}.  %%%%%%
\end{abstract}

\maketitle

%    Text of article.

%%%(Section 1)%%%%%
\section{Introduction.}
%%%%%%%%%%%%%%%%%%%%
%%%

\subsection{Summary of the contents}
Let $M$ and $N$ be manifolds with some additional structure, e.g holomorphic, symplectic, real algebraic etc. The relation between the topology of the space of continuous maps preserving this structure and that of the space of all continuous maps has long been an object of study in several areas of topology and geometry. Early examples  were provided by Gromov's h-principle for holomorphic maps \cite{Grom}. In these cases the manifolds are complex, the structure preserving maps are the holomorphic ones, and the spaces of holomorphic and continuous maps turn out to be homotopy equivalent. However,  in many other cases, the space of structure preserving maps approximates, in some sense, the space of all continuous ones and becomes homotopy equivalent to it only after some kind of stabilization.  A paradigmatic example of this type was given in a seminal paper of Segal \cite{Se}, where the space of rational (or holomorphic) maps of a fixed degree from the Riemann sphere to a complex projective space was shown to approximate the space of all continuous maps in homotopy, with the approximation becoming better as the degree increases. Segal's result was extended to a variety of other target spaces by various authors (e.g. \cite{BHM}). Although it has been sometimes stated that these phenomena are inherently related to complex or at least symplectic structures, real analogs of Segal's result were given in \cite{Se}, \cite{Mo1}, \cite{GKY2}, \cite{Va}. In fact, Segal  formulated the complex and real approximation theorems which he had proved  as a single statement involving equivariant equivalence, with respect to complex conjugation (see the remark after Proposition 1.4 of \cite{Se}). A similar idea was used in \cite{GKY2}, Theorem 3.7.  This theorem amounts to two equivariant ones, one of which is equivalent to (a stable version of)  Segal's equivariant one, while the other one is related to the \lq real version\rq\  of Segal's theorem proved in \cite{Mo1}. 

All the results mentioned above (except the ones involving Gromov's h-principle), assume that the domain of the mappings is one dimensional (complex or real). It is natural to try to generalize them to the situation where the domain is higher dimensional. That such generalizations might be possible was first suggested by Segal (see the remark under Proposition 1.3 of \cite{Se}). 
A large step in this direction appeared to have been made when Mostovoy
 \cite{Mo2}  showed that the homotopy types of spaces of holomorphic maps 
from $\CP^m$ to $\CP^n$
 (for $m\le n$) approximate the homotopy types of the spaces of continuous maps, with the approximation becoming better as the degree increases. Unfortunately Mostovoy's published argument contains several gaps.  A new version of the paper, currently only available from the author, appears to correct all the mistakes, with the main results remaining essentially unchanged. There are two major changes in the proofs. One is that the space $\Rat_f(p,q)$ of $(p,q)$ maps from $\CP^m$ to $\CP^n$  that restrict to a fixed map $f$ on a fixed hyperplane, used in section 2 of the published article is replaced by the space $\overline {\Rat}_f(p,q)$ of pairs of $n+1$-tuples of polynomials in $m$ variables that produce these maps. In the published version of the article it is assumed that these two spaces are homotopy equivalent, which is clearly not the case. However, they are homotopy equivalent after stabilisation, both being equivalent to $\Omega^{2m}\CP^n$ - the space of continuous maps that restrict to $f$ on a fixed hyperplane.
 The second important change is the introduction of a new filtration on the simplicial resolution $X^\Delta \subset \Bbb R^N\times Y $ of a map $h: X \to Y$ and an embedding $i: X \to \Bbb R^N = \Bbb C^{N/2}$. This filtration is defined by means of complex skeleta  (where the complex $k$-skeleton of a simplex in a complex affine space is the union of all its faces that are contained in complex affine subspaces of dimension at most $k$) and replaces the analogous \lq\lq real\rq\rq\ filtration in the arguments of section 4.

 In \cite{AKY1}  a variant of Mostovoy's idea  was applied to the case of algebraic maps from $\RP^m$ to $\RP^n$.  This leads naturally to the question whether one can generalize the $\Bbb Z/2$-equivariant Theorem 3.7 of \cite{GKY2} to an analogous equivariant equivalence of the spaces of algebraic maps and
continuous maps between projective spaces, in which the domain is either a real or a complex projective space of dimension $m>1$, the range is a  complex projective space of dimension greater or equal to that of the domain.
(Here, the $\Bbb Z/2$-action is induced by complex conjugation.)  

Note that Theorem 3.7 of \cite{GKY2} has two parts, in the first the domain being complex and in the second real. Clearly, to prove the  first part we need a  Mostovoy's complex theorem. We plan to consider this problem in a future paper. 
Here we concentrate on the second part, concerning equivariant algebraic maps from real projective spaces to complex projective ones. Our main theorem is new, but it uses the main result of   \cite{AKY1} and where  our arguments are very similar to those used in that paper we omit their details and refer the reader to  \cite{AKY1}. 
%%%%
\par\vspace{2mm}\par
%%%
In the remainder of this section we introduce our notation  and state the main definitions and theorems. 
\subsection{Notation and Main Results}

We first introduce notation which is analogous to the one used in \cite{AKY1}, the presence of $\Bbb C$ indicating that the complex case is being considered (i.e. maps take values in $\CP^n$ or polynomials have coefficients in $\Bbb C$).
\par
Let $m$ and $n$ be positive integers such that
$1\leq m <2\cdot (n+1)-1$.
We choose  ${\bf e}_m=[1:0:\cdots :0]\in \RP^m$ and
${\bf e}_n^{\p}=[1:0:\cdots :0]\in\Bbb \CP^n$
as the base points of $\RP^m$ and  $\Bbb  \CP^n$, respectively.
%For $d(\K)\leq m <d(\K) \cdot (n+1)-1$,
Let $\Map^*(\RP^m,\Bbb  \CP^n)$ denote the space
consisting of all based maps
$f:(\RP^m,{\bf e}_{m})\to (\Bbb \CP^n,{\bf e}^{\p}_n)$. 
%%%
When $m\not= 1)$,
%$d(\K)\leq m< d(\K)\cdot (n+1)-1$, 
we denote
by $\Map_{\epsilon}^* (\RP^m,\CP^n)$
the corresponding path component
of $\Map^* (\RP^m,\CP^n)$
for each
$\epsilon \in \Z/2=\{0,1\}=
\pi_0(\Map^*(\RP^m,\Bbb  \CP^n))$
(\cite{CS}). 
%the corresponding path component
%of $\Map^* (\RP^m,\KP^n)$
%(\cite{CS}).
Similarly, let $\Map (\RP^m,\CP^n)$ denote the space
of all free maps $f:\RP^m\to\CP^n$ and
$\Map_{\epsilon}(\RP^m,\CP^n)$ the corresponding path component
of $\Map (\RP^m,\CP^n)$.
%%%%%
\par
We shall use the symbols $z_i$ when we refer to complex valued coordinates or  variables or when we refer to complex and real valued ones at the same time while the notation $x_i$ will be restricted to the purely real case.
%%%%
\par\vspace{2mm}\par
%%%
A  map $f:\RP^m\to \CP^n$ is called a
{\it algebraic map of the degree }$d$ if it can be represented as
a rational map of the form
$f=[f_0:\cdots :f_n]$ such that
 $f_0,\cdots ,f_n\in\Bbb C [z_0,\cdots ,z_m]$ are homogeneous polynomials of the same degree $d$
with no common {\it real } roots except ${\bf 0}_{m+1}=(0,\cdots ,0)\in\R^{m+1}$.
 %%%
 We  denote by $\Alg_d(\RP^m,\CP^n)$ (resp. $\Alg_d^*(\RP^m,\CP^n)$) the space
 consisting of all (resp. based) algebraic maps $f:\RP^m\to \CP^n$
 of degree $d$.
 It is easy to see that there are inclusions $\Alg_d(\RP^m,\CP^n)\subset \Map_{[d]_2}(\RP^m,\CP^n)$
 and
 $\Alg^*_d(\RP^m,\CP^n)\subset \Map^*_{[d]_2}(\RP^m,\CP^n)$,
 where $[d]_2\in\Z/2=\{0,1\}$ denotes the integer $d$ mod $2$.
%%%
\par
Let
$ A_{d}(m,n)(\Bbb C)$ denote  the space consisting of all $(n+1)$-tuples 
$(f_0,\cdots ,f_n)\in \Bbb C[z_0,\cdots ,z_m]^{n+1}$
of  homogeneous polynomials of degree $d$  with coefficients in $\Bbb C$  and without non-trivial common real roots
(but possibly with non-trivial common {\it non-real} ones).
\par  
Let $ A_{d}^{\Bbb C}(m,n)\subset A_d(m,n)(\Bbb C)$ be the subspace consisting of $(n+1)$-tuples 
$(f_0,\cdots ,f_n)\in A_d(m,n)(\Bbb C)$
such that the coefficient of $z_0^d$ in $f_0$ is 1 and $0$ in the other $f_k$'s
$(k\not= 0$).
%%%
Then there is a natural surjective projection map
%%(1.1)%%
\begin{equation}\label{Psi}
\Psi_d^{\Bbb C}:A_d^{\Bbb C}(m,n) \to \Alg_d^*(\RP^m,\CP^n).
\end{equation}
%%%%%%%%%% 
If $d=2d^*\equiv 0$ $\mo$ is an even positive integer,
we also have a natural projection map
%%(1.2)%%
\begin{equation}\label{}
j_d^{\Bbb C}:A_d^{\Bbb C}(m,n) \to \Map^*(\RP^m,\Bbb C^{n+1}\setminus\{{\bf 0}\})
\simeq
\Map^*(\RP^m,S^{2n+1})
\end{equation}
%%%%
defined by
$$
j_d^{\Bbb C}(f)([x_0:\cdots :x_m])=
\Big(\frac{f_0(x_0,\cdots ,x_m)}{(\sum_{k=0}^mx_k^2)^{d^*}},\cdots
,\frac{f_n(x_0,\cdots ,x_m)}{(\sum_{k=0}^mx_k^2)^{d^*}}\Big)
$$
for $f=(f_0,\cdots ,f_n)\in A_d^{\Bbb C}(m,n)$.
Note that the map
$j_d^{\Bbb C}$ is well defined only if $d\geq 2$ is an even integer.
\par\vspace{2mm}\par
For $m\geq 2$ and $g\in \Alg_d^*(\RP^{m-1},\CP^n)$ a fixed algebraic map,
we denote by
$\Alg_d^{\Bbb C}(m,n;g)$ and $F_d^{\Bbb C}(m,n;g)$  the spaces defined by
$$
\begin{cases}
\Alg_d^{\Bbb C}(m,n;g) &=\ \{f\in \Alg_d^*(\RP^m,\CP^n):f\vert \RP^{m-1}=g\},
\\
F_d^{\Bbb C}(m,n;g) & =\ \{f\in \Map_d^*(\RP^m,\CP^n):f\vert \RP^{m-1}=g\}.
\end{cases}
$$
%%%
\par
It is well-known that there is a homotopy equivalence
$F_d^{\Bbb C}(m,n;g)\simeq  \Omega^m\CP^n$ (\cite{Sasao}).
Let $A_d^{\Bbb C}(m,n;g)\subset A_d^{\Bbb C}(m,n)$ denote the subspace
given 
by 
$$A_d^{\Bbb C}(m,n;g)=(\Psi_d^{\Bbb C})^{-1}(\Alg^{\Bbb C}_d(m,n;g)).
$$
%%%
%%%%
Observe that if an algebraic map $f\in \Alg_d^*(\RP^m,\CP^n)$ can be represented as
$f=[f_0:\cdots :f_n]$ for some 
$(f_0,\cdots ,f_n)\in A_d^{\Bbb C}(m,n)$ then  the same map can also be represented as
$f=[\tilde{g}_mf_0:\cdots :\tilde{g}_{m}f_n]$, where
$\tilde{g}_{m}=\sum_{k=0}^mz_k^2$.
So there is an inclusion 
$\Alg_d^*(\RP^m,\CP^n)\subset \Alg_{d+2}^*(\RP^m,\CP^n)$
and we can define
{\it the stabilization map}
$s_d:A_d^{\Bbb C}(m,n)\stackrel{}{\to} A_{d+2}^{\Bbb C}(m,n)$
by $s_d(f_0,\cdots ,f_n)=(\tilde{g}_mf_0,\cdots ,\tilde{g}_mf_n).$
It is easy to see that there is a commutative diagram
$$
\begin{CD}
A_d^{\Bbb C}(m,n) @>s_d>> A_{d+2}^{\Bbb C}(m,n)
\\
@V{\Psi_d^{\Bbb C}}VV @V{\Psi_{d+2}^{\Bbb C}}VV
\\
\Alg^{*}_d(\RP^m,\CP^n) @>\subset>> \Alg_{d+2}^{*}(\RP^m,\CP^n)
\end{CD}
$$
A map $f\in \Alg_d^*(\RP^m,\CP^n)$ is  called an algebraic map of
{\it minimal degree} $d$ if 
$f\in \Alg_d^*(\RP^m,\CP^n)\setminus \Alg_{d-2}^*(\RP^m,\CP^n)$.
It is easy to see that
if $g\in\Alg_d^*(\RP^{m-1},\CP^n)$ is an algebraic map of minimal degree $d$, then
the restriction
%%(1.3)%%
\begin{equation}\label{restriction}
\Psi_d^{\Bbb C}\vert A_d^{\Bbb C}(m,n;g):A_d^{\Bbb C}(m,n;g)\stackrel{\cong}{\rightarrow}
\Alg_d^{\Bbb C}(m,n;g)
\end{equation}
is a homeomorphism.
%%%
Let 
%%(1.4)%%
\begin{equation}
\begin{cases}
i_{d,\Bbb C}:\Alg_d^*(\RP^m,\CP^n)\stackrel{\subset}{\rightarrow} \Map_{[d]_2}^*(\RP^m,\CP^n)
\\
i_{d,\Bbb C}^{\p}:\Alg_d^{\Bbb C}(m,n;g)\stackrel{\subset}{\rightarrow} F(m,n;g)\simeq \Omega^m\CP^n
%\\
%i_d^{\Bbb K}=i_{d,\K}\circ \Psi_d:A_d^{\Bbb K}(m,n)\to \Map_{[d]_2}^*(\RP^m,\KP^n)
\end{cases}
\end{equation}
%%%%
denote the inclusions and let
%%(1.5)%%
\begin{equation}
i_d^{\Bbb C}=i_{d,\Bbb C}\circ \Psi_d^{\Bbb C}:A_d^{\Bbb C}(m,n)\to \Map_{[d]_2}^*(\RP^m,\CP^n).
\end{equation}
be the natural projection.
For a connected space $X$, 
let $F(X,r)$ denote the configuration space  of distinct $r$ points in $X$.
The symmetric group $S_r$ of $r$ letters acts on $F(X,r)$ freely by permuting 
coordinates. Let $C_r(X)$ be the configuration space of unordered 
$r$-distinct  points in $X$ given by 
$C_r(X)=F(X,r)/S_r$.
Note that there is a stable homotopy equivalence
%(the Snaith splitting)
$\Omega^mS^{m+l}\simeq_s \bigvee_{r=1}^{\infty}F(\R^m,r)_+\wedge_{S_r}\big(\bigwedge^rS^l\big)$
(\cite{Sn}), and it is known that
there is an isomorphism
$H_k(F(\R^m,r)_+\wedge_{S_r}\big(\bigwedge^rS^l\big),\Z)
\cong H_{k-rl}(C_r(\R^m),(\pm \Z)^{\otimes r})$ for $k,l\geq 1$
(\cite{CLM},  \cite{Va}),
where
$\bigwedge^rX=X\wedge \cdots \wedge X$
($r$ times).
\par
Let $G^d_{m,N}$ denote the abelian group $G^d_{m,N}=\bigoplus_{r=1}^{\lfloor \frac{d+1}{2}\rfloor}
H_{k-(N-m)r}(C_r(\R^m),(\pm \Z)^{\otimes (N-m)} )$, where
the meaning of  $(\pm \Z)^{\otimes (N-m)}$  is the same as in \cite{Va}.

Let   $D_{\Bbb K}(d;m,n)$
be the positive integer defined by
%%(1.6)%%

$$D_{\K}(d;m,n)=
\begin{cases}
(n-m)\big(\lfloor \frac{d+1}{2}\rfloor  +1\big) -1
& \mbox{if } \K =\R,
\\
(2n-m+1)\big(\lfloor \frac{d+1}{2}\rfloor  +1\big) -1
& \mbox{if } \K =\C,
\end{cases}
$$
%%%%%%%
, and 
$\lfloor x\rfloor$ is the integer part of a real number $x$.
Note that $D_{\C}(d;m,n)=D_{\R}(d;m,2n+1)$.
%%%%%%
\par\vspace{2mm}\par
%%%
%%
First, recall 
the following 3 results. Here we use the notation of \cite{AKY1}.

%Since the stable result follows from the unstable one we only state the latter.
%
%%%(Theorem 1.1; KY1 etc)%%%%%%%
\begin{thm}[\cite{KY1}, \cite{Y5}]\label{thm: A1}
%%%%%%]
If $n\geq 2$ and then
the natural projection
$i_{d}^{\R}:A_d^{\R}(1,n)\to  \Map^*_{[d]_2}(\RP^1,\RP^n)\simeq \Omega S^{n}$
is a homotopy equivalence
up to dimension $D_1(d,n)=(d+1)(n-1)-1$.
\end{thm}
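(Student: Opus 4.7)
The plan is to proceed by Vassiliev's method of non-degenerate simplicial resolutions, following the approach in \cite{Va} and adapted for the real projective domain in \cite{KY1}, \cite{Y5}. First I would represent $A_d^{\R}(1,n)$ as the complement, inside the real affine space $V_d$ of normalized $(n+1)$-tuples of homogeneous degree-$d$ polynomials in $(z_0,z_1)$, of the discriminant $\Sigma_d$ consisting of tuples with a nontrivial common real root in $\RP^1$. Alexander duality in the one-point compactification of $V_d$ yields
\[
\tilde{H}^{k-1}(A_d^{\R}(1,n);\Z)\;\cong\;\bar{H}_{\dim V_d - k}(\Sigma_d;\Z),
\]
reducing the problem to a computation of the Borel--Moore homology of $\Sigma_d$ in low codegree.

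Second, I would construct the non-degenerate simplicial resolution $\pi_d\colon\tilde{\Sigma}_d\to\Sigma_d$, a proper homotopy equivalence, and equip it with the natural increasing filtration $F_1\subset F_2\subset\cdots\subset F_d=\tilde{\Sigma}_d$ indexed by the number of distinct common real roots of a tuple. Each stratum $F_k\setminus F_{k-1}$ is the total space of a bundle over the configuration space $C_k(\R)$ whose fibre is the product of the open simplex $\mathring{\Delta}^{k-1}$ with a real affine space parametrizing tuples that vanish at the $k$ prescribed points. A Thom--Gysin argument identifies the associated spectral sequence with
\[
E^1_{k,q}\;\cong\;H_{q-k(n-1)}\bigl(C_k(\R);(\pm\Z)^{\otimes(n-1)}\bigr),\qquad 1\le k\le d,
\]
the sign twist encoding how orientations transform under permutation of the $k$ roots.

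Third, by the Snaith stable splitting
\[
\Omega S^n\;\simeq_s\;\bigvee_{k\ge 1}C_k(\R)_+\wedge_{S_k}(S^{n-1})^{\wedge k},
\]
the very same $E^1$ expression, but with $k$ unbounded, computes $H_*(\Omega S^n)$. The map $i_d^{\R}$ preserves both filtrations and induces an isomorphism on the $E^1$ pages in the range $k\le d$. Since the $k$-th Snaith summand contributes its lowest-dimensional class in degree $k(n-1)$, the first homology class missed by $A_d^{\R}(1,n)$ lies in degree $(d+1)(n-1)$. Hence $i_d^{\R}$ is a homology isomorphism through degree $(d+1)(n-1)-1=D_1(d,n)$, and since $n\ge 2$ both spaces are simply connected this upgrades to a homotopy equivalence in the same range.

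The main obstacle is the delicate identification of the local system $(\pm\Z)^{\otimes(n-1)}$ on $C_k(\R)$, together with verifying that no higher-stratum differentials perturb the range $k\le d$; in the present simpler one-dimensional setting this is manageable, whereas the analogous control for higher-dimensional domains is precisely the source of the technical difficulties encountered in \cite{Mo2} and \cite{AKY1}. The remaining task is routine bookkeeping in the Vassiliev spectral sequence, carried out in \cite{KY1}, \cite{Y5}.
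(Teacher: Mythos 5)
Your reconstruction follows exactly the method that this paper (and the cited sources \cite{KY1}, \cite{Y5}) uses: the paper does not reprove Theorem \ref{thm: A1} but quotes it, and its own Section 3 argument for the higher-dimensional analogue (Theorem \ref{thm: I*}) is precisely your scheme --- discriminant complement, Alexander duality, non-degenerate simplicial resolution filtered by the number of distinct common real roots, identification of the strata as affine/simplex bundles over configuration spaces via a Veronese-type embedding, Thom isomorphism giving $E^1_{r,s}\cong H_{s-\ast r}(C_r(\R^m),(\pm\Z)^{\otimes\ast})$, and comparison with Vassiliev's spectral sequence for the loop space, the truncation at the top stratum producing exactly the bound $(d+1)(n-1)-1$ in the one-variable case. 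Your bookkeeping for $m=1$ (roots lie in the affine chart because $f_0$ is monic in $z_0$; vanishing at $k\le d$ points imposes independent conditions; $C_k(\R)$ is contractible so the $k$-th column contributes a single $\Z$ in degree $k(n-1)$, matching $H_*(\Omega S^n)$) is correct.

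The one genuine gap is the final sentence: ``since $n\ge 2$ both spaces are simply connected'' is false for $n=2$, where $\pi_1(\Omega S^2)\cong\pi_2(S^2)\cong\Z$, and $A_d^{\R}(1,2)$ is only $(n-2)=0$-connected by the codimension count. So for $n=2$ the homology equivalence does not upgrade to a homotopy equivalence by the simply connected Whitehead theorem; one must either invoke that both spaces are simple (e.g.\ via an $H$-space/loop-sum structure and triviality of the $\pi_1$-action, together with an isomorphism on $\pi_1$), or treat $n=2$ separately, which is exactly the kind of distinction this paper is careful about in its own boundary cases ($m+1=n$, resp.\ $m=2n$, where only a homology equivalence is asserted). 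For $n\ge 3$ your argument is complete as stated. A second, minor point: the map $i_d^{\R}$ does not literally induce a map of resolutions; as in the proof of Theorem \ref{thm: I*} one passes through the non-degenerate resolution and Vassiliev's finite-dimensional polynomial approximation of the mapping space to get a filtration-preserving comparison --- routine, but it should be said.
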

%%(End of Theorem 1.1)%%%%
%%
%%(Theorem 1.2: AKY1)%%
\begin{thm}[\cite{AKY1}]
\label{thm: AKY1-I}
Let $2\leq m <n$ be integers and 
let $g\in \Alg_d^*(\RP^{m-1},\RP^n)$ be an algebraic map of minimal degree $d$.
%%%
\begin{enumerate}
%%(i)%%
\item[$\I$]
The inclusion
$i_{d,\R}^{\p}:\Alg_d^{\R}(m,n;g)\to F_d(m,n;g)\simeq \Omega^mS^n$
is a homotopy equivalence through dimension $D_{\R}(d;m,n)$ if $m+2\leq n$
and a homology equivalence through dimension $D_{\R}(d;m,n)$ if $m+1=n$. 
%%(ii)%%%
\item[$\II$]
For any $k\geq 1$, 
$H_k(\Alg_d^{\R}(m,n;g),\Z)$ contains the subgroup
$G^d_{m,n}$ as a direct summand.
%Here $G^d_{m,N}$ denote the abelian group defined by
%$$
%G^d_{m,N}=\bigoplus_{r=1}^{\lfloor \frac{d+1}{2}\rfloor}
%H_{k-(N-m)r}(C_r(\R^m),(\pm \Z)^{\otimes (N-m)} ),
%$$
%where the meaning of  $(\pm \Z)^{\otimes (N-m)}$  is the same as in \cite{Va}.
%%%
%%%%%%
\end{enumerate}
\end{thm}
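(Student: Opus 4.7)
The plan is to adapt, to the real setting with higher-dimensional domain, Mostovoy's simplicial-resolution strategy from \cite{Mo2}, following the version carried out in \cite{AKY1}. First, using the homeomorphism (\ref{restriction}) I identify $\Alg_d^{\R}(m,n;g)$ with $A_d^{\R}(m,n;g)$ and embed the latter as the complement, inside a contractible real affine space $V_d$ of $(n+1)$-tuples of real homogeneous polynomials of degree $d$ extending $g$, of a discriminant variety $\Sigma_d$ consisting of tuples that possess a common real zero in $\RP^m\setminus \RP^{m-1}\cong \R^m$. Alexander duality inside a one-point compactification of $V_d$ then reduces the computation of $H^*(\Alg_d^{\R}(m,n;g))$ to that of the Borel--Moore homology of $\Sigma_d$.

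Next, I construct a non-degenerate simplicial resolution $\Sigma_d^{\Delta}\to \Sigma_d$ and filter it by the cardinality $r$ of the supporting zero-configuration, yielding a spectral sequence whose first page satisfies $E^1_{r,k}\cong H_{k-(n-m)r}(C_r(\R^m),(\pm\Z)^{\otimes(n-m)})$ for $1\leq r\leq \lfloor (d+1)/2\rfloor$. These are exactly the terms appearing in Vassiliev's spectral sequence for $\Omega^m S^n$ built from the Snaith stable splitting. Since the ``missing'' columns $r>\lfloor (d+1)/2\rfloor$ would first contribute in total degree $(n-m)(\lfloor (d+1)/2\rfloor+1)=D_{\R}(d;m,n)+1$, a term-by-term comparison along the map $i_{d,\R}^{\p}$ shows that the two spectral sequences agree through total degree $D_{\R}(d;m,n)$. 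This gives the homology equivalence asserted in \I.

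For the homotopy statement under $m+2\leq n$ I invoke the fact that both spaces are simply connected in this range (since $\pi_1(\Omega^m S^n)\cong\pi_{m+1}(S^n)=0$ for $m+2\leq n$), so the homology equivalence upgrades to a homotopy equivalence through the same dimension by a relative Whitehead argument; in the borderline case $m+1=n$ the target $\Omega^m S^{m+1}$ fails to be simply connected, which is precisely why only the homology statement survives. Part \II\ is then read directly off the spectral sequence: the columns $r=1,\ldots,\lfloor (d+1)/2\rfloor$ survive to $E^{\infty}$ and contribute exactly the summands of $G^d_{m,n}$, and the existence of the Snaith stable splitting on the target side lets one split these off as direct summands in $H_*(\Alg_d^{\R}(m,n;g))$.

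The main technical obstacle is ensuring that the differentials originating from the degenerate fibres of the non-degenerate resolution (which replace the \lq\lq missing\rq\rq\ columns $r>\lfloor (d+1)/2\rfloor$) do not interfere with the surviving columns before total degree $D_{\R}(d;m,n)$. This is where Mostovoy's complex-skeleta refinement of the filtration is essential: by further stratifying each degenerate fibre according to the complex-affine dimension of the span of its vertices, one gains enough connectivity of the fibres to push their homological contributions past the threshold. Implementing this refinement in the real setting, while correctly tracking the twisted coefficients $(\pm\Z)^{\otimes (n-m)}$ that arise from permuting zeros in $\R^m$, is the delicate point of the whole argument and the step where the precise form of $D_{\R}(d;m,n)$ is pinned down.
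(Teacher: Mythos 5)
Your outline reproduces essentially the argument of \cite{AKY1} (which is the proof the present paper invokes, and which Section 3 here transposes to the complex target): identify $\Alg_d^{\R}(m,n;g)$ with $A_d^{\R}(m,n;g)$ via the minimal-degree condition, apply Alexander duality in the affine space $A_d^*$ to the discriminant $\Sigma_d$, resolve $\Sigma_d$ simplicially, and compare the resulting spectral sequence column-by-column with Vassiliev's spectral sequence for $\Omega^mS^n$; the identification of $E^1_{r,s}$ for $r\leq\lfloor(d+1)/2\rfloor$, the vanishing range giving $D_{\R}(d;m,n)$, the simple-connectivity dichotomy at $m+1=n$, and the degeneration of Vassiliev's sequence giving part (ii) all match. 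The one point where you diverge from what is actually done is the final paragraph: the complex-skeleta refinement of the filtration belongs to Mostovoy's holomorphic $\CP^m\to\CP^n$ setting and is not used (or needed) here. In the real-domain case the resolution is made non-degenerate in the relevant range by the Veronese embedding $\psi_d^*$, which makes any $r\leq d+1$ distinct points of $\R^m$ span an $(r-1)$-simplex (Lemma \ref{lemma: simplex*}), and the columns $r>\lfloor(d+1)/2\rfloor$ are disposed of by a straightforward dimension estimate on the strata (the analogue of Lemma \ref{lemma: range**}), not by a finer stratification of degenerate fibres. This does not break your argument, but the ``delicate point'' you identify is solved by a simpler mechanism than the one you propose.
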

%%%
%%(Theorem 1.3)%%%%
\begin{thm}[\cite{AKY1}]
\label{thm: AKY1-II}
%%%%
If $2\leq m <n$ and $d\equiv 0$ $\mo$ are positive integers,
$$\begin{cases}
j_d^{\R}:A_d^{\R}(m,n)\to \Map^*(\RP^m,S^n)
\\
i_d^{\R}:A_d^{\R}(m,n)\to \Map_0^*(\RP^m,\RP^n)
\end{cases}
$$
are homotopy equivalence through dimension $D_{\R}(d;m,n)$ if
$m+2\leq n$ and homology equivalences through dimension
$D_{\R}(d;m,n)$ if $m+1=n$.
\end{thm}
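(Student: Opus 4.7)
The plan is to deduce Theorem \ref{thm: AKY1-II} from the fiberwise Theorem \ref{thm: AKY1-I}(\I) by an induction on $m$, comparing a restriction fibration on the algebraic side with the analogous evaluation fibration on the continuous side. A preliminary reduction is that, since $d$ is even, $[d]_2=0$, and for $n\ge 2$ the universal covering $S^n\to \RP^n$ induces a weak homotopy equivalence
$$
\Map^*(\RP^m,S^n)\stackrel{\simeq}{\longrightarrow}\Map_0^*(\RP^m,\RP^n),
$$
under which $j_d^{\R}$ corresponds to $i_d^{\R}$. Hence the two assertions are equivalent, and I concentrate on $j_d^{\R}$.

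Fix a minimal-degree algebraic map $g\in \Alg_d^*(\RP^{m-1},\RP^n)$ and its unique preimage $g^{\p}\in A_d^{\R}(m-1,n)$ under the real analogue of $\Psi_d^{\Bbb C}$. Restriction along the hyperplane $\{z_m=0\}$ defines a map
$$
r_d:A_d^{\R}(m,n)\to A_d^{\R}(m-1,n),
$$
whose fiber over $g^{\p}$ coincides with $A_d^{\R}(m,n;g)$ under the real analogue of the homeomorphism (\ref{restriction}), by the minimal-degree hypothesis on $g$. Assembling this with the usual restriction fibration for continuous maps yields a ladder
$$
\begin{CD}
A_d^{\R}(m,n;g) @>>> A_d^{\R}(m,n) @>r_d>> A_d^{\R}(m-1,n) \\
@VVV @VV j_d^{\R} V @VVV \\
\Omega^m S^n @>>> \Map^*(\RP^m,S^n) @>>> \Map^*(\RP^{m-1},S^n),
\end{CD}
$$
in which the left-hand vertical arrow is (up to the homeomorphism above) the inclusion $i_{d,\R}^{\p}$ of Theorem \ref{thm: AKY1-I}(\I), and the bottom row is a genuine Serre fibration with fiber $\Omega^m S^n\simeq F_d(m,n;g)$.

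The inductive step combines three ingredients. First, the induction hypothesis makes the right vertical arrow a homotopy (resp. homology) equivalence through dimension $D_{\R}(d;m-1,n)$, and a direct check from the definition gives $D_{\R}(d;m-1,n)\ge D_{\R}(d;m,n)$. Second, Theorem \ref{thm: AKY1-I}(\I) makes the fibre map a homotopy equivalence through $D_{\R}(d;m,n)$ when $m+2\le n$ and a homology equivalence through the same range when $m+1=n$. Third, comparison of the long exact sequences of homotopy groups (respectively of Serre spectral sequences on homology) via the five lemma then transports these through-the-dimension equivalences to the middle column, yielding the assertion for $j_d^{\R}$. The base case $m=1$ follows from Theorem \ref{thm: A1} together with the identification $\Map^*(\RP^1,S^n)\simeq \Map^*(S^1,S^n)=\Omega S^n$ for $n\ge 2$.

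The principal technical obstacle will be promoting $r_d$ to a bona fide quasi-fibration over the relevant subspace of $A_d^{\R}(m-1,n)$: restriction maps on spaces of algebraic maps typically fail to be quasi-fibrations in the naive sense because fibres jump in dimension as the restricted tuple acquires extra common complex (though not real) roots. The standard remedy, already used in \cite{AKY1} following Mostovoy, is to replace $A_d^{\R}(m,n)$ by an auxiliary affine space of polynomial tuples with prescribed behaviour on the hyperplane $\{z_m=0\}$, for which the quasi-fibration property is established by an explicit local trivialisation, and then to argue that the inclusion of this auxiliary space into $A_d^{\R}(m,n)$ is an equivalence through the required range. Once the ladder is set up on the quasi-fibration level, the numerical bookkeeping with $D_{\R}(d;m,n)$ is routine.
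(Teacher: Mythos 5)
First, a point of reference: the paper you are reading does not prove Theorem \ref{thm: AKY1-II} at all; the statement is imported from \cite{AKY1} and used as a black box (for instance in the proof of Theorem \ref{thm: II}, where the complex case is deduced from it via the homeomorphism $\psi_{m,n}$). So the comparison has to be with the argument of \cite{AKY1}, and that argument is not the one you propose. There the global statement is proved by the same discriminant/simplicial-resolution method as the fibrewise one: one resolves the discriminant of $A_d^{\R}(m,n)$ inside the full affine space $\mathcal{H}_d^0\times(\mathcal{H}_d^1)^n$ and compares the resulting spectral sequence with Vassiliev's spectral sequence for $\Map^*(X,S^n)$ taken with $X=\RP^m$, so that configuration spaces of $\RP^m$ minus the base point replace $C_r(\R^m)$. (This is why Lemma \ref{lemma: Va} is stated for a general finite $m$-dimensional complex $X$ even though only $X=S^m$ is used in Section \ref{section 3}.) No fibration over $A_d^{\R}(m-1,n)$ appears anywhere in that proof.

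The genuine gap in your proposal is precisely the point you defer to your last paragraph. You have no argument that the top row of your ladder induces a long exact sequence of homotopy groups, let alone a Serre spectral sequence in homology --- and the latter is what you actually need in the boundary case $m+1=n$, where the five lemma on homotopy groups is unavailable because the fibre map is only a homology equivalence. The restriction map $r_d$ is the projection onto the first factor of an open subset of $A_d^{\R}(m-1,n)\times\mathcal{H}_{d-1}^{\;n+1}$; its fibres are open subsets of $\mathcal{H}_{d-1}^{\;n+1}$ which vary with the base point, and there is no evident local trivialisation and no Dold--Thom criterion that applies. Your assertion that the quasi-fibration property ``is established by an explicit local trivialisation'' in \cite{AKY1} is not correct: nothing of the sort appears there, precisely because the authors chose the spectral-sequence route that avoids it. Unless you can actually prove that $r_d$ is a homology fibration through dimension $D_{\R}(d;m,n)$ --- a statement comparable in difficulty to the theorem itself --- the inductive step does not go through. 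The peripheral parts of your outline are fine: the reduction of $i_d^{\R}$ to $j_d^{\R}$ via the covering $S^n\to\RP^n$ (Lemma \ref{lemma: Hopf}$\,$(i)), the monotonicity $D_{\R}(d;m-1,n)\ge D_{\R}(d;m,n)$, and the identification of the fibre of $r_d$ over a minimal-degree $g$ with $A_d^{\R}(m,n;g)$.
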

%%%%
%%(Remark)%%%
\par
{\it Remark. }
%%%
A map $f:X\to Y$ is called {\it a homotopy} (resp. {\it a homology}) {\it equivalence up to dimension} $D$ if
$f_*:\pi_k(X)\to \pi_k(Y)$ (resp.$ f_*:H_k(X,\Z)\to H_k(Y,\Z)$) is an isomorphism for any
$k<D$ and an epimorphism for $k=D$.
Similarly, it is called {\it a homotopy} (resp. {\it a homology}) {\it equivalence through dimension} $D$ if
$f_*:\pi_k(X)\to \pi_k(Y)$ (resp.$ f_*:H_k(X,\Z)\to H_k(Y,\Z)$) is an isomorphism for any $k\leq D$.
%%
%%
%%%%%%%%%
\par\vspace{3mm}\par
%%%
In this paper, from now on let $m,n\geq 2$ be positive integers. Our main results are as follows.
%%
%%(Theorem 1.4: Theorem I)%%
\begin{thm}\label{thm: I}
%%%%%%%
Let  $2\leq m\leq 2n$, and 
let $g\in \Alg_d^*(\RP^{m-1},\CP^n)$ be an algebraic map of minimal degree $d$.
%%%
\begin{enumerate}
%%(i)%%
\item[$\I$]
%%%
The inclusion
$i_{d,\C}^{\p}:\Alg_d^{\C}(m,n;g)\to F_d(m,n;g)\simeq \Omega^mS^{2n+1}$
is a homotopy equivalence through dimension $D_{\C}(d;m,n)$ if
$m<2n$ and a homology equivalence through dimension $D_{\C}(d;m,n)$
if $m=2n$. 
%%(ii)%%%
\item[$\II$]
For any $k\geq 1$, 
$H_k(\Alg_d^{\C}(m,n;g),\Z)$ contains the subgroup
$G^d_{m,2n+1}$ as a direct summand.
%%%%%%
\end{enumerate}
\end{thm}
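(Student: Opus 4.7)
The plan is to transplant the Vassiliev--Mostovoy simplicial resolution argument used for the real case in \cite{AKY1} to the complex-coefficient setting. The crucial observation is that the defining condition of $A_d^{\C}(m,n;g)$ — absence of common \emph{real} zeros — imposes $2(n+1)$ real conditions at each point of $\R^{m+1}$, twice as many as in the real case. This is precisely the content of the identity $D_{\C}(d;m,n)=D_{\R}(d;m,2n+1)$, and it indicates that the argument of \cite{AKY1} should go through with $n$ replaced by $2n+1$ in the dimensional bookkeeping. Concretely, I would identify $\Alg_d^{\C}(m,n;g)$ with $A_d^{\C}(m,n;g)$ via the homeomorphism (\ref{restriction}) and realize the latter as the complement of a real-algebraic discriminant $\Sigma_d^{\C}(g)$ inside the complex affine space $V_d^{\C}(g)\cong\C^N$ of $(n+1)$-tuples restricting to $g$ on $\RP^{m-1}$ with the prescribed normalization on the $z_0^d$-coefficient; here $\Sigma_d^{\C}(g)$ consists of tuples with at least one common real zero in $\RP^m\setminus\RP^{m-1}\cong\R^m$. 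Alexander duality inside $V_d^{\C}(g)\cong\R^{2N}$ then reduces the computation of $H_*(A_d^{\C}(m,n;g))$, through the desired range, to that of the Borel--Moore homology of $\Sigma_d^{\C}(g)$.

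Next I would construct a non-degenerate truncated simplicial resolution $\Sigma_d^{\C,\Delta}\to\Sigma_d^{\C}(g)$ equipped with the complex-skeleta filtration $\{F_r\}_{r\ge 1}$ of Mostovoy's corrected argument (the same technical ingredient used in \cite{AKY1}), where $F_r\setminus F_{r-1}$ is fibered over $C_r(\R^m)$. Over each unordered configuration, the fiber is the product of an open simplex $\mathring\Delta^{r-1}$ with the linear subspace of $V_d^{\C}(g)$ of tuples vanishing at each chosen real point, which has real codimension $2(n+1)r$. Computing the Borel--Moore homology of successive strata while tracking the sign twist induced by the $S_r$-action produces
\begin{equation*}
E^1_{r,s}\;\cong\;\bar{H}_{r+s-(2n+1)r-1}\bigl(C_r(\R^m);\,(\pm\Z)^{\otimes(2n+1)}\bigr),\qquad 1\le r\le\lfloor\tfrac{d+1}{2}\rfloor,
\end{equation*}
identical in form to the $E^1$-term of \cite{AKY1} with $n$ replaced by $2n+1$.

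Comparing with the analogous non-truncated spectral sequence, which converges to $H_*(F_d^{\C}(m,n;g))=H_*(\Omega^m\CP^n)=H_*(\Omega^m S^{2n+1})$ via Sasao's equivalence and $\Omega^m S^1\simeq *$ for $m\ge 2$, the map of spectral sequences induced by $i_{d,\C}^{\p}$ is an isomorphism on $E^1_{r,*}$ for every $r\le\lfloor(d+1)/2\rfloor$. A dimension count identical to the one in \cite{AKY1} then yields the bound $D_{\C}(d;m,n)$. When $m<2n$, both target and source are simply connected (since $\pi_1(\Omega^m S^{2n+1})=\pi_{m+1}(S^{2n+1})=0$), so Whitehead's theorem upgrades the homology equivalence to a homotopy equivalence, proving $\I$. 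For $\II$, the Snaith stable splitting of $\Omega^m S^{2n+1}$ exhibits the summands $C_r(\R^m)_+\wedge_{S_r}\bigwedge^r S^{2n+1-m}$ for $1\le r\le\lfloor(d+1)/2\rfloor$ already undisturbed in the truncated $E^1$-page, so their sum $G^d_{m,2n+1}$ persists as a direct summand in $H_*(\Alg_d^{\C}(m,n;g),\Z)$, detected by pulling back the stable splitting along $i_{d,\C}^{\p}$.

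I expect the main obstacle to be the careful parity bookkeeping producing the twist $(\pm\Z)^{\otimes(2n+1)}$, rather than the naively expected $(\pm\Z)^{\otimes(2n+2)}$: this arises from a subtle interplay between the $S_r$-action on $\mathring\Delta^{r-1}$ and the complex-skeleta stratification, which is precisely the point where Mostovoy's originally published argument had to be corrected. The complex-skeleta filtration — the same technical device that makes the real argument of \cite{AKY1} work — is what allows one to reduce the parity correctly in the present complex setting, and verifying that it does so will occupy most of the technical work.
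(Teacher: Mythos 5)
Your overall strategy is the same as the paper's: identify $\Alg_d^{\C}(m,n;g)$ with $A_d^{\C}(m,n;g)$, resolve the discriminant $\Sigma_d$ simplicially, use Alexander duality to obtain a truncated Vassiliev-type spectral sequence whose $E^1$-term for $r\le\lfloor\frac{d+1}{2}\rfloor$ is $H_{s-(2n-m+2)r}(C_r(\R^m),(\pm\Z)^{\otimes(2n-m+1)})$, compare with Vassiliev's spectral sequence for $\Omega^mS^{2n+1}$, and conclude by the dimension count giving $D_{\C}(d;m,n)$, simple connectivity for (i), and vanishing differentials plus trivial extensions for (ii). Two points, however, are genuinely off. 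First, the filtration you invoke is not the one used or needed: Mostovoy's complex-skeleta filtration is a device for the complex-domain case ($\CP^m\to\CP^n$), whereas here, exactly as in \cite{AKY1}, the common zeros being tracked are real points of $\R^m$, the ordinary filtration of the simplicial resolution by the number of points suffices, and the truncation/non-degeneracy comes from the Veronese embedding (Lemma \ref{lemma: simplex*}). The twist is then a routine Thom isomorphism plus Poincar\'e duality computation (Lemmas \ref{lemma: vector bundle*} and \ref{lemma: range*}) and equals $(\pm\Z)^{\otimes(2n-m+1)}$, not $(\pm\Z)^{\otimes(2n+1)}$ (these differ when $m$ is odd); your closing paragraph therefore locates the main technical difficulty in the wrong place.

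Second, and more substantively, you assert that the comparison of spectral sequences is ``induced by $i_{d,\C}^{\p}$''. But Vassiliev's resolution computes $H_*(\Omega^mS^{2n+1})=H_*(\Map^*(S^m,S^{2n+1}))$, not $H_*(F_d(m,n;g))$, and the map out of $A_d^{\C}(m,n;g)$ that extends over the resolutions is the evaluation map $j_d^{\p}$ into $\Omega^mS^{2n+1}$, not the inclusion into $F_d(m,n;g)$. Passing from the statement about $j_d^{\p}$ (Theorem \ref{thm: I*}) to Theorem \ref{thm: I} itself requires the commutative diagram (\ref{gamma}) involving $\gamma_m^{\#}$ and $\Omega^m\gamma_n^{\C}$, together with the fact that $\gamma_m^{\#}\circ i^{\p}:F_d(m,n;g)\to\Omega^m\CP^n$ is a homotopy equivalence through dimension $D_{\C}(d;m,n)$ (Lemma \ref{lemma: II}); the latter is not automatic from the abstract equivalence $F_d(m,n;g)\simeq\Omega^m\CP^n$, and is proved by running the comparison with field coefficients and using finite-dimensionality to upgrade the resulting epimorphisms to isomorphisms. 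Your proposal omits this bridge, so as written it proves only the analogue of Theorem \ref{thm: I*} for $j_d^{\p}$, not the assertion about the inclusion $i_{d,\C}^{\p}$.
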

%%%(End of Theorem 1.4)%%

%%(Theorem 1.5)%%%%
\begin{thm}\label{thm: II}
%%%%
If $2\leq m\leq 2n$ and $d\equiv 0$ $\mo$ are positive integers,
$$
\begin{cases}
j_d^{\C}:A_d^{\C}(m,n)\to \Map^*(\RP^m,S^{2n+1})
\\
i_d^{\C}:A_d^{\C}(m,n)\to \Map^*_0(\RP^m,\CP^n)
\end{cases}
$$
are homotopy equivalences through dimension $D_{\C}(d;m,n)$
if 
$m<2n$ and homology equivalences through dimension $D_{\C}(d;m,n)$
if $m=2n$.
\end{thm}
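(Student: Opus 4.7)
The plan is to prove Theorem \ref{thm: II} by induction on $m$, following the strategy used for Theorem \ref{thm: AKY1-II} in \cite{AKY1}, with Theorem \ref{thm: I} replacing Theorem \ref{thm: AKY1-I} as the input on the fiber. The base case $m=1$ — needed to start the induction at $m=2$ — is the complex analog of Theorem \ref{thm: A1}, giving the approximation $A_d^{\Bbb C}(1,n) \to \Omega S^{2n+1}$ through dimension $D_{\Bbb C}(d;1,n)$ by the same techniques as \cite{KY1, Y5}.

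For the inductive step I would fix an algebraic map $g \in \Alg_d^*(\RP^{m-1}, \CP^n)$ of minimal degree $d$, which exists because $d$ is even. Setting $z_m = 0$ defines a restriction map $r: A_d^{\Bbb C}(m,n) \to A_d^{\Bbb C}(m-1,n)$, and identifying its fiber with $A_d^{\Bbb C}(m,n;g)$ via (\ref{restriction}) makes $r$ into a quasi-fibration by the same stratification argument used in \cite{AKY1}. This sits in a commutative ladder over the continuous restriction Serre fibration:
\[
\begin{CD}
A_d^{\Bbb C}(m,n;g) @>>> A_d^{\Bbb C}(m,n) @>r>> A_d^{\Bbb C}(m-1,n) \\
@VV{i_{d,\Bbb C}^{\p}}V @VV{j_d^{\Bbb C}}V @VV{j_d^{\Bbb C}}V \\
\Omega^m S^{2n+1} @>>> \Map^*(\RP^m, S^{2n+1}) @>>> \Map^*(\RP^{m-1}, S^{2n+1}).
\end{CD}
\]
Theorem \ref{thm: I} makes the left vertical an equivalence through dimension $D_{\Bbb C}(d;m,n)$, and induction makes the right vertical an equivalence through dimension $D_{\Bbb C}(d;m-1,n)$, which strictly exceeds $D_{\Bbb C}(d;m,n)$ since the coefficient $(2n-m+1)$ decreases by $1$ with each increment of $m$. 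Applying the Five Lemma to the long exact sequences of homotopy (or, in the borderline case $m = 2n$, comparing the Leray--Serre spectral sequences for homology) then yields the approximation for $j_d^{\Bbb C}$ through dimension $D_{\Bbb C}(d;m,n)$.

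To deduce the statement for $i_d^{\Bbb C}$, I would apply $\Map^*(\RP^m,-)$ to the Hopf fibration $S^1 \to S^{2n+1} \to \CP^n$. Since $\Map^*(\RP^m, S^1) = \Map^*(\RP^m, K(\Z,1))$ has $\pi_0 = \tilde H^1(\RP^m;\Z) = \Z/2$ and vanishing higher homotopy, the induced map $\Map^*(\RP^m, S^{2n+1}) \to \Map_0^*(\RP^m, \CP^n)$ is essentially a $\Z/2$-covering of the relevant component, hence an isomorphism on $\pi_k$ and $H_k$ for $k \geq 1$. Combined with the factorization $i_d^{\Bbb C} = p_* \circ j_d^{\Bbb C}$ with $p: S^{2n+1} \to \CP^n$ the Hopf projection, this transfers the approximation result from $j_d^{\Bbb C}$ to $i_d^{\Bbb C}$.

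The principal obstacle is verifying the quasi-fibration property of $r$: one must adapt the Mostovoy/Vassiliev stratification of $A_d^{\Bbb R}(m,n)$ used in \cite{AKY1} to $A_d^{\Bbb C}(m,n)$, whose defining tuples are permitted to have common \emph{non-real} (but not real) roots. This is precisely where the complex discriminant subvariety acquires larger real codimension than its real counterpart, producing the enhanced bound $D_{\Bbb C}(d;m,n) = D_{\Bbb R}(d;m,2n+1)$ recorded at the end of Section~1.2.
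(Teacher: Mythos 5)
Your route for $j_d^{\C}$ is genuinely different from the paper's, and it has a real gap at its central step: the claim that the restriction map $r:A_d^{\C}(m,n)\to A_d^{\C}(m-1,n)$ (setting $z_m=0$) is a quasi-fibration is asserted, not proved, and there is no "stratification argument" in \cite{AKY1} that delivers it. Knowing that every fiber $A_d^{\C}(m,n;g)$ has the homology of $\Omega^m S^{2n+1}$ through the range (Theorem \ref{thm: I*}) does not make $r$ a quasi-fibration; one would need a Dold--Thom type verification over a stratification of the base, and for spaces of polynomial tuples in several variables this is exactly the kind of unestablished claim that the introduction warns about in connection with Mostovoy's gaps. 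Until that is supplied, the ladder comparison and the Five Lemma have nothing to act on. (There are further loose ends you would also have to close: Theorem \ref{thm: I*} is stated for fibers over \emph{minimal degree} $g$, while the induction needs control of the fiber over every point of $A_d^{\C}(m-1,n)$; and in the borderline case $m=2n$ a Leray--Serre comparison requires the quasi-fibration to behave well on homology with local coefficients.)

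The paper avoids all of this with one observation you missed: an $(n+1)$-tuple of complex polynomials with no nontrivial common \emph{real} root is literally the same data as a $(2n+2)$-tuple of real polynomials with no nontrivial common real root, so $\psi_{m,n}:A_d^{\R}(m,2n+1)\stackrel{\cong}{\rightarrow}A_d^{\C}(m,n)$ is a homeomorphism compatible with $j_d^{\R}$ and $j_d^{\C}$ into the common target $\Map^*(\RP^m,S^{2n+1})=\Map^*(\RP^m,\C^{n+1}\setminus\{{\bf 0}\})$. Since $D_{\C}(d;m,n)=D_{\R}(d;m,2n+1)$ by definition, the statement for $j_d^{\C}$ is an immediate corollary of Theorem \ref{thm: AKY1-II} applied with target dimension $2n+1$; no new spectral sequence or fibration argument is needed. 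Your second step, deducing the statement for $i_d^{\C}$ from that for $j_d^{\C}$ via the Hopf fibration, is essentially the paper's Lemma \ref{lemma: Hopf} and is fine in outline (note only that $\pi_0(\Map^*(\RP^m,S^1))\cong\tilde H^1(\RP^m;\Z)=0$ for $m\geq 2$, not $\Z/2$; the relevant point is that $\Map^*(\RP^m,S^1)$ is homotopy discrete and that degree-zero maps lift through $\gamma_n^{\C}$).
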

%%%%
%%%(Theorem 1.6)%%
%\begin{thm}\label{thm: III}
%%%%%%%%%
%If $2\leq m\leq 2n$ and $d=2d^*\equiv 0$ $\mo$ is a positive integer,  
%the map
%$i_d^{\C}:A_d^{\C}(m,n)\to \Map^*_0(\RP^m,\CP^n)$
%is a homotopy equivalence through dimension $D_{\C}(d;m,n)$ if $m<2n$ and
%a homology equivalence through dimension $D_{\C}(d;m,n)$
%if $m=2n$.
%%%%
%\end{thm}
%%%%%%%

%%%(Corollary 1.6)%%
\begin{cor}\label{cor: III}
%%%%%
If $2\leq m\leq 2n$ and $d\equiv 0$ $\mo$ are positive integers,  
the stabilization map
$s_d:A_d^{\C}(m,n)\to A_{d+2}^{\C}(m,n)$ 
is a homotopy equivalence through dimension
$D_{\C}(d;m,n)$ if
$m<2n$ and a homology equivalence through dimension $D_{\C}(d;m,n)$
if $m=2n$.
\end{cor}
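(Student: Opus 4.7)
The plan is to deduce the corollary directly from Theorem \ref{thm: II} by factoring the stabilization map through the mapping space. The starting point is the commutative square displayed between the definition of $s_d$ and the notion of minimal degree, which together with the inclusion $\Alg_{d+2}^*(\RP^m,\CP^n)\subset \Map_0^*(\RP^m,\CP^n)$ yields the identity
\begin{equation*}
i_{d+2}^{\C}\circ s_d = i_d^{\C}.
\end{equation*}
This is because multiplying each polynomial $f_j$ by $\tilde g_m=\sum_{k=0}^{m}z_k^2$, which is strictly positive on $\R^{m+1}\setminus\{{\bf 0}\}$, leaves the associated map $[f_0:\cdots:f_n]$ unchanged; hence $\Psi_{d+2}^{\C}\circ s_d=\Psi_d^{\C}$ (viewed as maps into $\Alg_{d+2}^*$), and post-composing with $i_{d+2,\C}$ gives the claimed equality.

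Next, I would invoke Theorem \ref{thm: II} at both degrees $d$ and $d+2$; both are even, so the hypothesis $d\equiv 0\ \mbox{{\rm (mod $2$)}}$ is preserved. Writing $D=D_{\C}(d;m,n)$ and $D'=D_{\C}(d+2;m,n)$, the elementary identity $\lfloor (d+3)/2\rfloor=\lfloor (d+1)/2\rfloor+1$ gives $D'>D$. Theorem \ref{thm: II} then asserts that $i_d^{\C}$ is a homotopy equivalence through dimension $D$ and $i_{d+2}^{\C}$ is a homotopy equivalence through dimension $D'$ when $m<2n$, with analogous homology statements when $m=2n$. In particular $(i_{d+2}^{\C})_*$ is an isomorphism on $\pi_k$ (respectively $H_k$) throughout the range $k\leq D$.

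The final step is a two-out-of-three argument applied to the triangle $i_{d+2}^{\C}\circ s_d = i_d^{\C}$: in the range $k\leq D$, both $(i_d^{\C})_*$ and $(i_{d+2}^{\C})_*$ are isomorphisms on $\pi_k$ (respectively $H_k$), so $(s_d)_*$ must be an isomorphism in this range as well. This gives exactly the claimed homotopy equivalence (when $m<2n$) or homology equivalence (when $m=2n$) through dimension $D_{\C}(d;m,n)$. The corollary is a formal consequence of Theorem \ref{thm: II}, so there is no substantial obstacle; the only point to verify carefully is the on-the-nose compatibility $i_{d+2}^{\C}\circ s_d=i_d^{\C}$, which is immediate from the explicit formula for $s_d$.
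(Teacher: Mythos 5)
Your argument is correct and is essentially the paper's own proof: the paper deduces the corollary from Theorem \ref{thm: II} via the on-the-nose factorization $j_{d+2}^{\C}\circ s_d=j_d^{\C}$ and the same two-out-of-three reasoning, while you use the equally valid identity $i_{d+2}^{\C}\circ s_d=i_d^{\C}$, i.e.\ the other map covered by the same theorem. The only differences are cosmetic (factoring through $\Map_0^*(\RP^m,\CP^n)$ rather than $\Map^*(\RP^m,S^{2n+1})$), so nothing further is needed.
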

%%%%%

%%
Note that the
complex conjugation on $\C$ naturally induces $\Z/2$-actions on
the spaces $\Alg_d^{\C}(m,n;g)$ and $A_d^{\C}(m,n)$.
In the same way it also induces a $\Z/2$-action on $\CP^n$ and this action extends to actions on the spaces
$\Map^*(\RP^m,S^{2n+1})$ and $\Map^*_{\epsilon}(\RP^m,\CP^n)$,
 where we identify 
$S^{2n+1}=\{(w_0,\cdots ,w_n)\in\C^{n+1}:\sum_{k=0}^n\vert w_k\vert^2 =1\}$
and regard $\RP^m$ as a $\Z/2$-space with the trivial $\Z/2$-action.
Since the maps $i^{\p}_{d,\C}$, $j_d^{\C}$, $i_d^{\C}$ are
$\Z/2$-equivariant and
$(i^{\p}_{d,\C})^{\Z/2}=i^{\p}_{d,\R}$, $(j_d^{\C})^{\Z/2}=j_d^{\R}$, 
$(i_d^{\C})^{\Z/2}=i_d^{\R}$, we easily obtain the following result.

%%(Corollary 1.7)%%
\begin{cor}\label{cor: IV}
%%%
Let $2\leq m\leq 2n$ and $d\geq 1$ be  positive integers.
\begin{enumerate}
%%(i)%%
\item[$\I$]
The inclusion map
$i_{d,\C}^{\p}:\Alg_d^{\C}(m,n;g)\to F_d(m,n;g)\simeq \Omega^mS^{2n+1}$
is a $\Z/2$-equivariant homotopy equivalence through dimension $D_{\R}(d;m,n)$ if
$m<2n$ and a $\Z/2$-equivariant homology equivalence through dimension $D_{\R}(d;m,n)$
if $m=2n$.
%%(ii)%%
\item[$\II$]
If $d\equiv 0$ $\mo$, then
$$
\begin{cases}
j_d^{\C}:A_d^{\C}(m,n)\to \Map^*(\RP^m,S^{2n+1})
\\
i_d^{\C}:A_d^{\C}(m,n)\to \Map^*_0(\RP^m,\CP^n)
\end{cases}
$$
are $\Z/2$-equivariant homotopy equivalences through dimension $D_{\R}(d;m,n)$
if 
$m<2n$ and $\Z/2$-equivariant homology equivalences through dimension $D_{\R}(d;m,n)$
if $m=2n$.
%%%
\end{enumerate}
\end{cor}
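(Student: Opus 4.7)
The plan is to reduce the corollary to the underlying non-equivariant statements via the $\Z/2$-equivariant Whitehead theorem, combining Theorems~\ref{thm: I} and~\ref{thm: II} of the present paper with Theorems~\ref{thm: AKY1-I} and~\ref{thm: AKY1-II} from \cite{AKY1}.

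First I would verify the fixed-point identifications stated just above the corollary. Complex conjugation acts coefficient-wise on $A_d^{\C}(m,n)$, so the fixed set consists of tuples with real coefficients and $A_d^{\C}(m,n)^{\Z/2}=A_d^{\R}(m,n)$; provided $g$ is itself real, restriction gives $\Alg_d^{\C}(m,n;g)^{\Z/2}=\Alg_d^{\R}(m,n;g)$. On the target side, under the identifications used in the paper, $(\CP^n)^{\Z/2}=\RP^n$ and $(S^{2n+1})^{\Z/2}=S^n$, and $\RP^m$ carries the trivial action, so the mapping-space fixed points are $\Map^*(\RP^m,\CP^n)^{\Z/2}=\Map^*(\RP^m,\RP^n)$ and $\Map^*(\RP^m,S^{2n+1})^{\Z/2}=\Map^*(\RP^m,S^n)$. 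The identities $(i_{d,\C}^{\p})^{\Z/2}=i_{d,\R}^{\p}$, $(j_d^{\C})^{\Z/2}=j_d^{\R}$ and $(i_d^{\C})^{\Z/2}=i_d^{\R}$ then follow.

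Next I would invoke the equivariant Whitehead principle: for $\Z/2$-CW complexes, a $\Z/2$-equivariant map is a $\Z/2$-equivariant (weak) homotopy, respectively homology, equivalence through dimension $D$ if and only if both the underlying map and its restriction to $\Z/2$-fixed points are ordinary (weak) homotopy, respectively homology, equivalences through dimension $D$. Since all spaces involved are real algebraic varieties or mapping spaces between such, they admit compatible $\Z/2$-CW structures, so this principle applies.

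It then remains to feed in the inputs. Theorems~\ref{thm: I} and~\ref{thm: II} provide the required non-equivariant equivalences for the underlying complex maps through dimension $D_{\C}(d;m,n)=D_{\R}(d;m,2n+1)\geq D_{\R}(d;m,n)$, and so also through $D_{\R}(d;m,n)$; Theorems~\ref{thm: AKY1-I} and~\ref{thm: AKY1-II} supply the corresponding equivalences for the fixed-point maps through $D_{\R}(d;m,n)$, with the statement vacuous for $m\geq n$ since $D_{\R}(d;m,n)\leq -1$ there. The equivariant Whitehead principle then yields the corollary. The main technical point is precisely the verification of the equivariant Whitehead theorem in this "through dimension $D$" form for the specific spaces at hand, which amounts to producing compatible $\Z/2$-CW (or ANR) structures and running the standard obstruction-theoretic argument; the borderline case $m+1=n$, where the real input drops from the homotopy to the homology level, must be tracked carefully through the equivariant translation.
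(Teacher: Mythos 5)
Your proposal is correct and is essentially the paper's own argument: verify the fixed-point identifications $(i_{d,\C}^{\p})^{\Z/2}=i_{d,\R}^{\p}$, $(j_d^{\C})^{\Z/2}=j_d^{\R}$, $(i_d^{\C})^{\Z/2}=i_d^{\R}$, then quote Theorems \ref{thm: I} and \ref{thm: II} for the underlying maps (using $D_{\R}(d;m,n)\leq D_{\C}(d;m,n)$) and Theorems \ref{thm: AKY1-I} and \ref{thm: AKY1-II} for the fixed-point maps, the latter being vacuous when $m\geq n$. The only superfluous step is the appeal to an equivariant Whitehead theorem with $\Z/2$-CW structures: the paper's notion of equivariant equivalence through dimension $D$ is \emph{defined} subgroup-by-subgroup via $\pi_k(X^{H})$ (see the Remark following the corollary), so the reduction to the underlying and fixed-point maps is definitional and no obstruction theory is needed; the borderline case $m+1=n$ that you flag, where the real input is only a homology equivalence, is a genuine delicacy that the paper itself glosses over.
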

%%%%%%%%%%%
\par
{\it Remark. }
%%%
Let $G$ be a finite group and let
$f:X\to Y$ be a $G$-equivariant map.
Then a map $f:X\to Y$ is called 
{\it a $G$-equivariant homotopy }(resp. {\it homology})
{\it equivalence through dimension }$D$ if the induced
homomorphism
$f_*^{H}:\pi_k(X^{H})\stackrel{\cong}{\rightarrow}\pi_k(Y^{H})$
(resp.
$f_*^{H}:H_k(X^{H},\Z)\stackrel{\cong}{\rightarrow}H_k(Y^{H},\Z)$)
are isomorphisms for any $k\leq D$ and any subgroup $H\subset G$.
%%%
\par\vspace{2mm}\par
%%%%%%

Of course we would also like to understand the cases $d\equiv 1$ $\mo$.
The homotopy type of
$\Alg_d^{*}(\RP^m,\CP^n)$ appears hard to investigate in general. 
However, for $d=1$, 
$\Psi_1^{\K}:A^{\C}_1(m,n)\stackrel{\cong}{\rightarrow}\Alg_1^{*}(\RP^m,\CP^n)$
is a homeomorphism and we can prove the following results.
%%%(Theorem 1.8)%%
\begin{thm}\label{thm: V}
%%%%%%
\begin{enumerate}
\item[$\I$]
If $2\leq m< 2n$,
the inclusion
$$
i_{1,\C}:\Alg_1^{*}(\RP^m,\CP^n)
\to
\Map_1^*(\RP^m,\CP^n)
$$
is a homotopy equivalence up to dimension
$D_{\C}(1;m,n)=4n-2m+1$.
%%(ii)%%
\item[$\II$]
If $m=2n\geq 4$, the inclusion $i_{1,\C}$ induces an isomorphism
$$
(i_{1,\C})_*:\pi_1(\Alg_1^{*}(\RP^{2n},\CP^n))
\stackrel{\cong}{\longrightarrow}
\pi_1(\Map^*_1(\RP^{2n},\CP^n))\cong\Z/2 .
$$
\end{enumerate}
\end{thm}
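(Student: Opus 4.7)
Since $\Psi_1^\C:A_1^\C(m,n)\xrightarrow{\cong}\Alg_1^*(\RP^m,\CP^n)$ is a homeomorphism for $d=1$, I would analyse $A_1^\C(m,n)$ directly. A point is an $m$-tuple $(\mathbf{a}_1,\dots,\mathbf{a}_m)\in(\C^{n+1})^m$ such that $\mathbf{e}_0,\mathbf{a}_1,\dots,\mathbf{a}_m$ are $\R$-linearly independent in $\C^{n+1}\cong\R^{2n+2}$; projection modulo $\R\mathbf{e}_0$ exhibits $A_1^\C(m,n)$ as a fibre bundle with contractible fibre $\R^m$ over the Stiefel manifold $V_m(\R^{2n+1})$, so $\Alg_1^*(\RP^m,\CP^n)\simeq V_m(\R^{2n+1})$. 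In particular, $V_{2n}(\R^{2n+1})=O(2n+1)/O(1)$, and the long exact sequence of $O(1)\to O(2n+1)\to V_{2n}(\R^{2n+1})$ yields $\pi_1(\Alg_1^*(\RP^{2n},\CP^n))\cong\pi_1(O(2n+1))=\Z/2$ for $n\geq 2$.

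For Part (i), the plan is induction on $m\geq 2$. Restriction to $\{z_m=0\}$ gives a fibre bundle
$$A_1^\C(m,n;g)\to A_1^\C(m,n)\to A_1^\C(m-1,n),$$
and the fibre $A_1^\C(m,n;g)$, being the complement in $\C^{n+1}\cong\R^{2n+2}$ of a real $m$-dimensional subspace, is homotopy equivalent to $S^{2n+1-m}$. The cofibration $\RP^{m-1}\hookrightarrow\RP^m\to S^m$ produces a parallel fibration which for $m\geq 3$ (where $\pi_m(\CP^n)=0$) restricts to
$$\Omega^m\CP^n\to\Map_1^*(\RP^m,\CP^n)\to\Map_1^*(\RP^{m-1},\CP^n).$$
By Theorem~\ref{thm: I} applied to the automatically minimal case $d=1$, the fibre inclusion $S^{2n+1-m}\to\Omega^m S^{2n+1}$ is a $(4n-2m+1)$-equivalence---essentially Freudenthal's theorem applied to the adjoint of $\mathrm{id}_{S^{2n+1}}$. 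Combining the inductive hypothesis (a $(4n-2m+3)$-equivalence on bases) with this $(4n-2m+1)$-equivalence on fibres via the five-lemma applied to the ladder of long exact sequences produces a $(4n-2m+1)$-equivalence on total spaces.

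The main obstacle is the base case $m=2$, where induction breaks down: $A_1^\C(1,n)\simeq S^{2n}$ and $\Omega\CP^n\simeq S^1\times\Omega S^{2n+1}$ disagree on $\pi_1$. Instead I would compute both $\pi_k(V_2(\R^{2n+1}))$ and $\pi_k(\Map_1^*(\RP^2,\CP^n))$ in the range $k\leq 4n-3$ and match them directly. The former follows from the sphere bundle $S^{2n-1}\to V_2(\R^{2n+1})\to S^{2n}$, whose connecting homomorphism is multiplication by the Euler characteristic $\chi(S^{2n})=2$. The latter follows from the long exact sequence of $\Omega^2\CP^n\to\Map^*(\RP^2,\CP^n)\to\Omega\CP^n$, whose connecting homomorphism $\pi_{k+1}(\Omega\CP^n)\to\pi_k(\Omega^2\CP^n)$ is also multiplication by $2$, coming from the degree-$2$ attaching map of the top cell of $\RP^2$. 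The matching connecting maps produce matching homotopy groups in the Freudenthal range, and naturality of the linear-embedding map identifies the inclusion as the required equivalence, seeding the induction.

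For Part (ii), $\pi_1(\Alg_1^*(\RP^{2n},\CP^n))=\Z/2$ is established in the first paragraph. Since $\CP^n\to P_{2n+1}\CP^n$ is a $(2n+2)$-equivalence while $\dim\Sigma\RP^{2n}=2n+1$, the induced map $\pi_1(\Map^*(\RP^{2n},\CP^n))\to\pi_1(\Map^*(\RP^{2n},P_{2n+1}\CP^n))$ is an isomorphism. Applying $\Map^*(\RP^{2n},-)$ to the principal fibration $K(\Z,2n+1)\to P_{2n+1}\CP^n\to K(\Z,2)$ with $k$-invariant $x^{n+1}$, and using that $\Map^*(\RP^{2n},K(\Z,2))\simeq\Z/2$ is discrete while $\pi_1(\Map^*(\RP^{2n},K(\Z,2n+1)))\cong H^{2n}(\RP^{2n},\Z)=\Z/2$, the long exact sequence yields $\pi_1(\Map_1^*(\RP^{2n},\CP^n))=\Z/2$. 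Finally, to verify that $(i_{1,\C})_*$ is the nontrivial homomorphism between these two copies of $\Z/2$, I would represent the generator of $\pi_1(V_{2n}(\R^{2n+1}))$ as the standard $SO(2)$-rotation loop on the first two real coordinates of $\R^{2n+1}$ and check, by a direct cocycle computation in the Postnikov tower, that the associated adjoint map $\Sigma\RP^{2n}\to\CP^n$ represents the nonzero class in $H^{2n+1}(\Sigma\RP^{2n},\Z)=\Z/2$.
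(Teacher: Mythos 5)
Your treatment of part (i) is essentially the paper's route: identify $\Alg_1^*(\RP^m,\CP^n)$ with the Stiefel manifold $V_{2n+1,m}$ via the homeomorphism $A_1^{\C}(m,n)\cong V_{2n+1,m}\times\R^m$, compare the bundle $S^{2n-m+1}\to V_{2n+1,m}\to V_{2n+1,m-1}$ with the restriction fibration on mapping spaces, show the map of fibres is (up to homotopy) the $m$-fold suspension and hence a $(4n-2m+1)$-equivalence, and induct with the five lemma. Your base case $m=2$ — matching the two multiplication-by-$2$ connecting homomorphisms — is the same comparison the paper makes; the failure of the $\pi_1$-isomorphism on the base level ($S^{2n}\to\Omega\CP^n$ is $0\to\Z$ on $\pi_1$) is harmless only because both total spaces are $(2n-2)$-connected, so you should record that connectivity explicitly (as the paper does) rather than rely on "matching groups" alone, since abstractly isomorphic homotopy groups do not by themselves make a given map an equivalence.

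Part (ii) is where your plan has a genuine gap. The computation $\pi_1(\Map_1^*(\RP^{2n},\CP^n))\cong\Z/2$ via the Postnikov tower is fine and agrees with the paper, but the entire content of part (ii) is that $(i_{1,\C})_*$ is the \emph{nontrivial} homomorphism $\Z/2\to\Z/2$, and for that you offer only an unexecuted "direct cocycle computation in the Postnikov tower." That step is not routine: the basepoint component here is the degree-one component, not the null component, so identifying a loop with a class in $H^{2n+1}(\Sigma\RP^{2n};\Z)$ requires a difference-class construction relative to a chosen map and nullhomotopy, and you give no indication of how the parity of the $SO(2)$-rotation loop would actually be evaluated against it. The paper sidesteps all of this by running the same fibration ladder one step further: for $m=2n$ the map of fibres $\hat{s}_{2n}:S^1\to\Omega^{2n}\CP^n$ induces an isomorphism on $\pi_1$, and $\tilde{i}_{2n-1}$ induces an isomorphism on $\pi_2$ by part (i) (since $D_{\C}(1;2n-1,n)=3$), so the four lemma applied to the two exact sequences ending in $\pi_1$ of the total spaces gives the isomorphism at once. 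You already have every ingredient for this from your part (i) setup; replace the cocycle computation with that diagram chase.
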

%%%%%
%
%%(Corollary 1.9)
\begin{cor}\label{cor: V}
%%%%%%%
\begin{enumerate}
%%(i)%%
\item[$\I$]
If $2\leq m< 2n$ and $d\equiv 0$ $\mo$ are positive integers,  
the space $\Alg_d^{*}(\RP^m,\CP^n)$
is $(2n-m)$-connected and
$$
\pi_{2n-m+1}(\Alg_{d}^{*}(\RP^m,\CP^n))
\cong
\begin{cases}
\Z & \mbox{if }\ m\equiv 1\ \mo ,
\\
\Z/2 & \mbox{if }\ m\equiv 0\ \mo .
\end{cases}
$$
%%(ii)%%%
\item[$\II$]
If $2\leq m\leq 2n$ and $\epsilon\in\{0,1\}$,  the
two spaces $\Alg_1^{*}(\RP^m,\CP^n)$ and
$\Map_{\epsilon}^*(\RP^m,\CP^n)$ are $(2n-m)$-connected, and
%%%%
\begin{eqnarray*}
\pi_{2n-m+1}(\Alg_{1}^{*}(\RP^m,\CP^n))
&\cong&
\pi_{2n-m+1}(\Map_{\epsilon}^*(\RP^m,\CP^n))
\\
&\cong&
\begin{cases}
\Z & \mbox{if }\ m\equiv 1\ \mo ,
\\
\Z/2 & \mbox{if }\ m\equiv 0\ \mo .
\end{cases}
\end{eqnarray*}
\end{enumerate}
%%%
\end{cor}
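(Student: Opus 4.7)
The plan is to prove (II) first (since $\Psi_1^{\C}$ is a homeomorphism and Theorem~\ref{thm: V} applies directly there) and then to deduce (I) from Theorem~\ref{thm: II} via a transfer along $\Psi_d^{\C}$. Both parts will rest on an independent Postnikov-tower computation of the low-dimensional homotopy groups of $\Map^*_{\epsilon}(\RP^m,\CP^n)$.

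For this computation I would use that the $(2n+1)$-st Postnikov section $\tau_{\leq 2n+1}\CP^n$ sits in a principal fibration
$$
K(\Z, 2n+1) \longrightarrow \tau_{\leq 2n+1}\CP^n \longrightarrow K(\Z, 2),
$$
and that $\CP^n \to \tau_{\leq 2n+1}\CP^n$ is a $(2n+2)$-equivalence, hence induces an isomorphism on $\pi_k(\Map^*(\RP^m,-))$ for $k \leq 2n+1-m$. Applying $\Map^*(\RP^m,-)$ to the Postnikov fibration, the base $\Map^*(\RP^m, K(\Z,2))$ has contractible components, since its homotopy groups $\tilde H^{2-k}(\RP^m;\Z)$ vanish for $k\geq 1$ (as $m\geq 2$); each component of the total space is therefore weakly equivalent to the fiber $\Map^*(\RP^m, K(\Z,2n+1))$, whose $k$-th homotopy group is $\tilde H^{2n+1-k}(\RP^m;\Z)$. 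The conclusion is that $\Map^*_{\epsilon}(\RP^m,\CP^n)$ is $(2n-m)$-connected and $\pi_{2n-m+1}(\Map^*_{\epsilon}) \cong H^m(\RP^m;\Z)$, which equals $\Z$ for $m$ odd and $\Z/2$ for $m$ even.

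For part~(II) with $2\leq m < 2n$, Theorem~\ref{thm: V}(I) transfers this calculation to $\Alg_1^{*}$, since it provides an isomorphism through dimension $D_{\C}(1;m,n) = 4n-2m+1 \geq 2n-m+1$. For $m=2n$ we have $2n-m+1=1$; Theorem~\ref{thm: V}(II) supplies the $\pi_1$-isomorphism when $m=2n\geq 4$, while the corner case $m=2n=2$ can be handled directly via the homeomorphism $\Alg_1^{*}(\RP^2,\CP^1) \cong A_1^{\C}(2,1)$, which is homotopy equivalent to the Stiefel manifold $V_2(\R^3) \simeq SO(3)$, hence connected with $\pi_1 \cong \Z/2$.

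For part~(I), with $d$ even and $2\leq m < 2n$, Theorem~\ref{thm: II} gives an isomorphism $(i_d^{\C})_*:\pi_k(A_d^{\C}(m,n)) \to \pi_k(\Map^*_0(\RP^m,\CP^n))$ for $k\leq D_{\C}(d;m,n) \geq 2n-m+1$, so $A_d^{\C}(m,n)$ already exhibits the connectivity and the value of $\pi_{2n-m+1}$ claimed in the statement. Factoring $i_d^{\C}=i_{d,\C}\circ\Psi_d^{\C}$ forces $(\Psi_d^{\C})_*$ to be injective and $(i_{d,\C})_*$ to be surjective on $\pi_k$ in the range. The hardest step will be to upgrade $(i_{d,\C})_*$ to a genuine isomorphism---equivalently, to show that $\Psi_d^{\C}$ is itself a weak equivalence in the required range. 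I plan to attack this by stratifying $A_d^{\C}(m,n)$ according to the minimal degree $d-2k$ of the image: the top stratum ($k=0$) maps homeomorphically onto the minimal-degree part of $\Alg_d^{*}$, while over a map of minimal degree $d-2k$ the fiber is the space of normalized degree-$2k$ polynomials on $\C^{m+1}$ with no non-trivial real zero. Each such multiplier space should be contractible (by convexity of positive-definite quadratic forms for $k=1$, and by a more careful deformation argument for $k\geq 2$), and a Vietoris--Begle style assembly of the strata then yields the desired weak equivalence and completes part~(I).
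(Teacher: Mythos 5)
Your computation of the low-dimensional homotopy of $\Map^*_{\epsilon}(\RP^m,\CP^n)$ via the Postnikov section $\tau_{\leq 2n+1}\CP^n$ is correct and is a genuine alternative to the paper's route (induction over restriction fibrations in Lemma \ref{lemma: simply}, transfer along ${\gamma_n^{\C}}_{\#}$ in Lemma \ref{lemma: Hopf} and Corollary \ref{cor: M0-homotopy}, and the Stiefel model $V_{2n+1,m}\simeq A_1^{\C}(m,n)$ of Lemma \ref{lemma: V}, Lemma \ref{lemma: 1-connected}, Proposition \ref{prop: V}); it has the advantage of treating both components $\epsilon=0,1$ uniformly. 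Part (II) of your argument is fine: for $m<2n$ the transfer through Theorem \ref{thm: V}(I) works since $2n-m+1\leq 4n-2m+1$, and your handling of $m=2n$ (Theorem \ref{thm: V}(II) for $m\geq 4$, the identification $A_1^{\C}(2,1)\simeq V_{3,2}$ for $m=2n=2$) agrees with what Corollary \ref{cor: pi1} gives.

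The gap is in your part (I). The step you yourself call the hardest --- showing that $\Psi_d^{\C}:A_d^{\C}(m,n)\to\Alg_d^{*}(\RP^m,\CP^n)$ is a weak equivalence through dimension $2n-m+1$ by stratifying by the minimal degree of the image and assembling via a \lq\lq Vietoris--Begle style\rq\rq\ argument --- is precisely (a range version of) Conjecture \ref{conj: Psi}, which the paper states it cannot prove; only the stabilized statement (Proposition \ref{prop: infty}) is available. Convexity and contractibility of the point fibers is already observed in Section 2, but contractible fibers of this surjection do not yield a weak equivalence by themselves: the fiber dimension jumps across the strata (a point over a map of minimal degree $d$, a positive-dimensional convex set over maps of smaller minimal degree), so $\Psi_d^{\C}$ is not a fibration and no quasifibration criterion is verified; Vietoris--Begle requires a closed (proper-type) surjection and in any case only produces a (\v{C}ech) cohomology isomorphism, after which you would still need control of $\pi_1(\Alg_d^{*})$ to reach homotopy groups. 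So as written this step is a restatement of the open conjecture, not a proof. Note that the paper's own proof of (I) does not go through any unstable property of $\Psi_d^{\C}$: it deduces the connectivity and the value of $\pi_{2n-m+1}$ directly from Theorem \ref{thm: II} together with Lemma \ref{lemma: simply} (equivalently Corollary \ref{cor: M0-homotopy}), i.e.\ from the computation of $\pi_k(A_d^{\C}(m,n))$ through dimension $D_{\C}(d;m,n)$. Whether that short deduction fully addresses the distinction between $A_d^{\C}(m,n)$ and $\Alg_d^{*}(\RP^m,\CP^n)$ is a separate question about the paper, but for your proposal the concrete point is that the equivalence you invoke for the transfer is not established, so part (I) is incomplete unless you either argue at the level of $A_d^{\C}(m,n)$ as the paper does or supply an actual proof of the range version of Conjecture \ref{conj: Psi}.
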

%%%%%%%%%%
\par
{\it Remark. }
We conjecture that $\pi_1(\Alg^{*}_{d}(\RP^{m},\CP^n))=\Z/2$
if $m=2n\geq 4$ and $d\equiv 0$ $\mo$, but at this time
we cannot prove this.
%%%%
\par\vspace{2mm}\par
This paper is organized as follows.
In section 2, we consider the space of algebraic maps
$\RP^m\to \CP^n$ and recall the stable Theorem obtained in
\cite{AKY1}.
In section 3 we study simplicial resolutions and
the spectral sequences induced from them and prove Theorem \ref{thm: I}.
In section 4 we prove Theorem \ref{thm: II} and
Corollary \ref{cor: III}
by using Theorem \ref{thm: AKY1-II}, and in section 5 we study the homotopy type of the stabilized space
$A^{\C}_{\infty +\epsilon}(m,n)$ for $\epsilon =0$.
Finally in section 6, we investigate
 $A_d^{\C}(m,n)$ for the case $d=1$ and prove Theorem \ref{thm: V}
and Corollary \ref{cor: V}.

%%%%%%(SECTION 2)%%%%%%%%%%%%%%
\section{Spaces of algebraic maps.}
%%%%%%%%%%%%%%%%%%%%%%
%%%%
%%%%%%%%%

An algebraic  map $f:\RP^m\to\CP^n$ can always be 
 represented as
$f=[f_0:f_1:\cdots :f_n]$, where
$f_0,\cdots ,f_n\in \C [z_0,z_1,\cdots ,z_m]$
are homogeneous polynomials of the same degree $d$
with no common {\it real} root other than
${\bf 0}_{m+1}=(0,\cdots ,0)\in\R^{m+1}$
(but possibly with common {\it non-real} roots).
%%%%
%\par
%%%% 
%We will refer to a algebraic map represented in this way as 
%{\it an algebraic map of degree} $d$.  
%Of course a representation of a algebraic map as an algebraic map of degree $d$  
%is not unique; 
%by multiplying all the components by a nowhere-vanishing polynomial 
%we obtain a a higher degree algebraic representation of the same map. 
%For a algebraic map $f$, we shall refer to the smallest possible degree of the polynomials in such a representation as the {\it minimal degree} of $f$.  Note that unlike degree, which depends on a representation of an algebraic map, the minimal degree depends only on the (algebraic) map itself. 
\par
%%%%%
Clearly, an element of $\Alg_d^*(\RP^m,\CP^n)$  
can always be represented in the form $f=[f_0:f_1:\cdots :f_n]$, 
such that the coefficient of $z_0^d$ in $f_0$
is $1$
and in the other polynomials $f_i$ $(i\not= 0)$ $0$.
%%%%
In general, such a representation is also not unique. 
For example, if we multiply all polynomials $f_i$ by two different homogeneous polynomials in $z_0,z_1,\cdots ,z_m$, which contain a power of $z_0$ with coefficient   $1$  
and are always positive on $\RP^m$, we will obtain two distinct representations of the same algebraic map.
So the map
$\Psi_d^{\C}:A_d^{\C}(m,n)\to \Alg_{d}^*(\RP^m,\CP^n)$ is a surjective projection map.
It is easy to see that any fiber of $\Psi_d^{\C}$ is homeomorphic to the space
consisting of non-negative positive homogeneous polynomial functions of some
fixed even degree. So it is convex and contractible.
Hence, it seems plausible to expect the following may be true.
%%(Conjecture 2.1)%%
\begin{conj}\label{conj: Psi}
The map
$\Psi_d^{\C}:A_d^{\C}(m,n) \to \Alg_d^*(\RP^m,\CP^n)$
is a homotopy equivalence.
\end{conj}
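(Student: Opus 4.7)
The plan is to prove that $\Psi_d^{\C}$ is a quasi-fibration with contractible fibres. Combined with the fact that $A_d^{\C}(m,n)$ and $\Alg_d^{*}(\RP^m,\CP^n)$ are semi-algebraic, and therefore have the homotopy type of CW complexes, this will promote the resulting weak homotopy equivalence to a genuine homotopy equivalence.

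The first step is to pin down the structure of a fibre. Given $f\in \Alg_d^{*}(\RP^m,\CP^n)$, let $d_0=d_0(f)$ denote the minimal degree in the sense of the paper, so that $d-d_0$ is a non-negative even integer, and fix a normalised minimal representative $(f_0,\ldots,f_n)\in A_{d_0}^{\C}(m,n)$ of $f$. Every tuple in $(\Psi_d^{\C})^{-1}(f)$ is then of the form $(hf_0,\ldots,hf_n)$ for a polynomial $h\in\C[z_0,\ldots,z_m]_{d-d_0}$ which satisfies a linear normalisation (forced by requiring the coefficient of $z_0^d$ in the first component to be $1$ and to vanish in the others) and the open condition that $h$ has no zero on $\R^{m+1}\setminus\{\mathbf 0\}$. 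Following the paper, this slice is convex and contractible.

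Second, I would filter the target by minimal degree,
$$
\Alg_d^{*}(\RP^m,\CP^n)\supset \Alg_{d-2}^{*}(\RP^m,\CP^n)\supset \Alg_{d-4}^{*}(\RP^m,\CP^n)\supset\cdots,
$$
and denote by $Y_k = \Alg_{d-2k}^{*}\setminus \Alg_{d-2k-2}^{*}$ the open strata. Using the homeomorphism (\ref{restriction}) stratum by stratum, I expect to trivialise $\Psi_d^{\C}$ over each $Y_k$: an open neighbourhood of $f\in Y_k$ lifts to an open set of minimal representatives in $A_{d-2k}^{\C}(m,n)$, and multiplication by normalised non-vanishing polynomials of degree $2k$ gives a local trivialisation with contractible fibre.

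Third, I would glue these local trivialisations using Dold's union theorem for quasi-fibrations, applied to the filtration above. To invoke the theorem one must check that, in a neighbourhood of each closed piece $\Alg_{d-2k}^{*}\subset \Alg_{d-2(k-1)}^{*}$, the restriction of $\Psi_d^{\C}$ has fibre inclusions that are homotopy equivalences up to deformation; once this is known, contractibility of all fibres finishes the argument and yields the desired homotopy equivalence.

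The main obstacle lies in this third step, at the interfaces between strata. When a sequence of maps of minimal degree $d-2k$ degenerates to a map of minimal degree $d-2k-2$, a quadratic factor is absorbed into the minimal representative; controlling this continuously amounts to extracting, in a canonical and continuous manner, a minimal representative out of an arbitrary one, and to ensuring that the convex fibre of multipliers $h$ of degree $2k$ deforms compatibly onto the larger convex fibre of multipliers of degree $2k+2$. Verifying Dold's criterion at these boundaries is the essential difficulty, and is presumably the reason why the statement appears here only as a conjecture.
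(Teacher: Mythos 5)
This statement is an open conjecture in the paper: the authors say explicitly that they cannot prove it, and what they actually establish (Proposition 5.2) is only the stabilized assertion that $\Psi^{\C}_{\infty+0}$ is an equivalence, deduced by comparing both sides with $\Map_0^*(\RP^m,\CP^n)$ via the approximation theorems rather than by any analysis of the fibres of $\Psi_d^{\C}$. Your quasi-fibration strategy is therefore a genuinely different (and, if completed, stronger) route, but as written it is a plan of attack rather than a proof, and you say so yourself: the decisive step --- verifying Dold's criterion at the interfaces between the minimal-degree strata, i.e.\ producing a continuous, canonical extraction of minimal representatives as a quadratic factor gets absorbed --- is left entirely open, and that is exactly where the content of the conjecture is concentrated. (There are also unaddressed point-set issues: that $\Alg_{d-2k-2}^*$ is closed in $\Alg_{d-2k}^*$, and that local sections of $\Psi^{\C}_{d-2k}$ exist over a whole stratum $Y_k$ rather than only over the subspaces of maps with fixed restriction to a hyperplane, which is all that the homeomorphism (1.3) provides.)

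There is moreover a concrete error already in your first step, inherited from the paper's own parenthetical remark. Since the target is $\CP^n$, the multiplier $h$ relating two representatives of the same map need only be a \emph{complex} homogeneous polynomial of degree $d-d_0$ with no nontrivial real zero and normalized $z_0^{d-d_0}$-coefficient; it need not be real, let alone positive. (Even the reduction to a single common multiplier $h$ requires knowing that a minimal representative has coprime components, which for $m\ge 2$ follows from minimality only after a Borsuk--Ulam argument ruling out odd-degree common factors without real zeros.) This space of multipliers is \emph{not} convex: for $m\ge 2$ the forms
$$
h_{\pm}=z_0^2+\cdots+z_{m-1}^2-z_m^2\pm\sqrt{-1}\,z_m^2
$$
both have no nontrivial real zero and are normalized, yet their midpoint $z_0^2+\cdots+z_{m-1}^2-z_m^2$ vanishes at $(1,0,\dots,0,1)$. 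The fibre is essentially $A^{\C}_{d-d_0}(m,0)$, whose contractibility is itself a nontrivial question lying outside the range $m\le 2n$ covered by the paper's theorems (for $m=1$ and degree $2$ it is even disconnected). The description "non-negative positive homogeneous polynomial functions, hence convex and contractible" is correct for the real map $\Psi_d^{\R}$ but not here. So both the base input of your argument (contractibility of the fibres) and the gluing step remain unestablished, and the statement stays a conjecture.
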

%%%
Although  we cannot prove this conjecture, we
show in section 6 that it is true if  $d\to \infty$ through even integers .
%%%
\par\vspace{2mm}\par
%%% 
We always have $\Alg_d^*(\RP^m,\CP^n) \subset \Alg_{d+2}^*(\RP^m,\CP^n)$
and $\Alg_d(\RP^m,\CP^n) \subset \Alg_{d+2}(\RP^m,\CP^n)$, 
because
$[f_0:f_1:\cdots :f_n]= 
[\tilde{g}_{m} f_0:\tilde{g}_mf_1:\cdots :\tilde{g}_{m} f_n]$.
%%%%%
\par\vspace{2mm}\par
%%%%%%%(Definition)%%%%
%{\bf Definition. }
%%%%%
%%(Definition 2.2)%%%%
\begin{defi}
{\rm
For $\epsilon \in\{0,1\}$,  define  subspaces 
$\Alg_{\epsilon}^*(m,n)\subset\Map_{\epsilon}^*(\RP^m,\CP^n)$
and
$\Alg_{\epsilon}(m,n)\subset\Map_{\epsilon}(\RP^m,\CP^n)$
by
%%%%%%%%
$$
\begin{cases}
\Alg^*_{\epsilon}(m,n)   &=\ \bigcup_{k=1}^{\infty}
\Alg^*_{\epsilon+2k}(\RP^m,\CP^n),
\\
\Alg_{\epsilon}(m,n)   &=\ \bigcup_{k=1}^{\infty}
\Alg_{\epsilon+2k}(\RP^m,\CP^n).
\end{cases}
$$
}
\end{defi}
%%%(Theorem 2.3)%%%%%%
\begin{thm}\label{thm: stable} 
%%%%%%%%
If $1\leq m\leq 2n$ and $\epsilon=0$ or $1$, 
the inclusion maps
$$
\begin{cases}
i:\Alg_{\epsilon}^*(m,n)\stackrel{\simeq}{\rightarrow}
 \Map_{\epsilon}^*(\RP^m,\CP^n)
\\
j:\Alg_{\epsilon}(m,n)\stackrel{\simeq}{\rightarrow}
 \Map_{\epsilon}(\RP^m,\CP^n)
\end{cases}
$$
are homotopy equivalences.
\end{thm}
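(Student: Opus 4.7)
The statement is a stable (infinite-degree) approximation result, so I would prove it by reducing to finite-degree approximations and passing to a direct limit. First, by construction $\Alg^*_\epsilon(m,n) = \bigcup_{k \geq 1} \Alg^*_{\epsilon+2k}(\RP^m, \CP^n)$ and $\Alg_\epsilon(m,n) = \bigcup_{k \geq 1} \Alg_{\epsilon+2k}(\RP^m, \CP^n)$, with bonding maps the stabilizations $[f_0 : \cdots : f_n] \mapsto [\tilde g_m f_0 : \cdots : \tilde g_m f_n]$ where $\tilde g_m = \sum_{k=0}^m z_k^2$. Because these are closed cofibrations and $\pi_*$ commutes with such sequential colimits, it suffices to show, for each fixed $k$, that the inclusion $i_{d,\C}\colon \Alg^*_d(\RP^m, \CP^n) \to \Map^*_\epsilon(\RP^m, \CP^n)$ is a $\pi_k$-isomorphism whenever $d \equiv \epsilon$ mod $2$ is sufficiently large.

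Second, I would establish such finite-degree approximations by a Mostovoy--Vassiliev-style non-degenerate simplicial resolution of the discriminant $\Sigma_d = (\C[z_0, \ldots, z_m]_d)^{n+1} \setminus A_d(m,n)(\C)$, the locus of $(n+1)$-tuples of degree-$d$ homogeneous polynomials having a common nontrivial real zero. The associated Alexander-type spectral sequence has columns indexed by the number of common real roots, with entries computed from the homology of configurations $C_r(\RP^m)$ twisted by $(\pm\Z)^{\otimes(2n+1-m)}$; via the Snaith splitting these columns reassemble into the homology of $\Map^*_\epsilon(\RP^m, \CP^n)$. The spectral sequence collapses in the expected range, giving connectivity of $i_d^{\C}$ of order $D_\C(d;m,n)$, which tends to $\infty$ with $d$. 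To transfer the result from $A_d^{\C}(m,n)$ to $\Alg^*_d(\RP^m, \CP^n)$ one uses that the fibers of $\Psi_d^{\C}$ are convex (spaces of non-negative homogeneous polynomials of fixed even degree), so in the stable range $\Psi_d^{\C}$ induces an isomorphism on homotopy groups.

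Finally, passing to the colimit in $d$ yields the homotopy equivalence $i$, and for the unbased version $j$ I would compare the evaluation fibrations $\Map^*_\epsilon(\RP^m, \CP^n) \to \Map_\epsilon(\RP^m, \CP^n) \to \CP^n$ and $\Alg^*_\epsilon(m,n) \to \Alg_\epsilon(m,n) \to \CP^n$ via the five-lemma. The main obstacle is obtaining the sharp connectivity range $D_\C(d;m,n)$ in the second step; this requires the complex skeleton refinement of the resolution filtration flagged in the Introduction rather than a naive real skeleton version. A secondary difficulty is the odd-$d$ case, since the convenient scanning target $\Omega^m S^{2n+1}$ used by $j_d^{\C}$ is defined only for even $d$: this must be handled either by multiplying by a fixed degree-$1$ algebraic map to identify odd with even components in the colimit, or by running the resolution argument directly on odd-degree components.
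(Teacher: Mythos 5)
Your proposed route is genuinely different from the paper's, and it contains a real gap. The paper does not build the stable statement out of the finite-degree approximation theorems at all: it simply invokes Theorem 2.3 of \cite{AKY1}, a Stone--Weierstrass-type result proved directly for the stabilized space of algebraic maps (in the spirit of \cite{Mo2}), which gives the unbased equivalence $j$ for free maps, and then remarks that the same argument applies verbatim to based maps to give $i$. In particular the logical order in the paper is the opposite of yours: the stable theorem is an \emph{input} (used later, e.g.\ in Proposition 5.2), while the degreewise theorems for the complex case are deduced from the real degreewise results of \cite{AKY1}, not fed into a colimit.

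The concrete gap in your argument is the transfer step from $A_d^{\C}(m,n)$ to $\Alg_d^*(\RP^m,\CP^n)$. You assert that because the fibers of $\Psi_d^{\C}$ are convex, $\Psi_d^{\C}$ induces isomorphisms on homotopy groups in the stable range; but $\Psi_d^{\C}$ is merely a surjection with contractible fibers, not known to be a fibration or quasifibration, and contractibility of fibers alone does not yield a weak equivalence. This is exactly Conjecture 2.1 of the paper, which the authors explicitly state they cannot prove (they only recover a stabilized version, in Proposition 5.2, \emph{as a consequence of} the theorem you are trying to prove). Your colimit argument really needs this: surjectivity of $\pi_k(\Alg_{\epsilon}^*(m,n))\to\pi_k(\Map_{\epsilon}^*(\RP^m,\CP^n))$ does follow from the degreewise results, since $i_d^{\C}$ factors through $\Alg_d^*(\RP^m,\CP^n)$, but injectivity does not: a class $\alpha\in\pi_k(\Alg_d^*(\RP^m,\CP^n))$ dying in the mapping space cannot be pulled back to $A_d^{\C}(m,n)$ without $\pi_k$-surjectivity of $\Psi_d^{\C}$, so you cannot conclude it dies at some finite stage of the union. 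Two further (smaller) problems: for $m=2n$ the degreewise statements are only homology equivalences, so the colimit argument would need an extra step (simplicity or a $\pi_1$ analysis, and $\pi_1$ here can be $\Z/2$) to upgrade to the claimed homotopy equivalence, whereas the Stone--Weierstrass argument is insensitive to this; and for the unbased case your five-lemma comparison requires knowing the evaluation map on $\Alg_{\epsilon}(m,n)$ is a fibration, which needs a separate argument (e.g.\ using the $PU(n+1)$-action), though this is patchable.
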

%%%%%%%%
\begin{proof}
It follows from  [\cite{AKY1}, Theorem 2.3] that $j$ is a homotopy equivalence.
The statement and the proof in it
 are valid also for spaces of  
based maps, and we can show
 that $i$ is also a homotopy equivalence.
\end{proof}
%%%%%

%%%(SECTION 3)%%%%
\section{Spectral sequences of the Vassiliev type.}\label{section 3}
%%%%%%%%%%%%%%%%%%
%%
From now on, we assume $2\leq m\leq 2n$ and
let $g\in \Alg_d^*(\RP^{m-1},\CP^n)$ be a fixed algebraic map of
minimal degree $d$, such that
$g=[g_0:\cdots :g_n]$ with
$(g_0,\cdots ,g_n)\in A_d^{\C}(m-1,n)$.
Note that $(g_0,\cdots ,g_n)$ is uniquely determined by $g$ (because of the minimal degree condition).
%%%
\par\vspace{2mm}\par

%%%
Let $\mathcal{H}_d
\subset \C [z_0,\cdots ,z_m]$
denote the subspace consisting of all
homogeneous polynomials of degree $d$.
For $\epsilon \in \{0,1\}$,
let $H_d^{\epsilon}\subset \mathcal{H}_d$ be the subspace
consisting of all homogeneous polynomials $f\in \mathcal{H}_d$
such that the coefficient of $(z_0)^d$ of $f$ is
$\epsilon$.
%%

%%%%%
Since $A_d^{\C}(m,n)$ is the space
consisting of all $(n+1)$-tuples
$(f_0,\cdots ,f_n)\in A_d(m,n)(\C)$
such that the coefficient of $z_0^d$ in $f_0$ is $1$ and
those of other $f_k$'s are all zero,
$A_d^{\C}(m,n)\subset \mathcal{H}_d^0\times (\mathcal{H}_d^1)^n$.
Note that
$\mathcal{H}_d^0\times (\mathcal{H}_d^1)^n$ is an affine space of real dimension of
$N_d=2(n+1)\Big(\binom{m+d}{m}-1\Big)$.

%%%
\par\vspace{2mm}\par
%%%
%%%%
%%%
Next, we set $B_k=\{g_k+z_mh:h\in \mathcal{H}_{d-1}\}$ $(k=0,1,\cdots ,n)$ and
define the subspace $A_d^*\subset \mathcal{H}_d^0\times (\mathcal{H}_d^1)^n$ by
$A_d^*=B_0\times B_1\times \cdots \times B_n$.
Note that
$A_d^*$ is an affine space of real dimension $N_d^*=2(n+1)\binom{m+d-1}{m}.$
%%%
\par\vspace{2mm}\par
%%(Definition 3.1)%%
\begin{defi}
%{\bf Definition. }
%%%%%%%%%%%%%%%
%Define the space
{\rm 
Let $A_d^{\C}(m,n;g)\subset A_d^*$ be the subspace 
$A_d^{\C}(m,n;g)=A_d^*\cap A_d^{\C}(m,n).$
Let
 $\Sigma_d\subset A_d$
%and $\Sigma_d \subset A_d$
denote the discriminant of $A_d^{\C}(m,n;g)$ in $A_d^*$ 
%or of
%$A_d^{\C}(m,n)$ in $A_d$
defined by
}
%$$
$\Sigma_d=A_d^*\setminus A_d^{\C}(m,n;g).$
%\quad
%\Sigma_d =A_d\setminus A_d^{\C}(m,n).
%$$
\end{defi}
%%%%%%%%%%%%%
%\par\vspace{2mm}\par

%%%
Since $g\in \Alg_d^*(\RP^{m-1},\RP^n)$ has minimal degree $d$, 
clearly  the restriction
%%%%(7)%%
\begin{equation}\label{Psi1}
\Psi_d^{\C}\vert_{A_d^{\C}(m,n;g)}:A_d^{\C}(m,n;g)\stackrel{\cong}{\rightarrow}
\Alg_d^{\C}(m,n;g)
\end{equation}
 is a homeomorphism.
%%%%%%
\par\vspace{2mm}\par
%%%%%%%%%%%%%%%%%%%%%%%

%%%%(Lemma 3.2)%%
\begin{lemma}\label{lemma: number}
$\I$
%%%
If $(f_0,\cdots ,f_n)\in \Sigma_d$ and
${\bf x}=(x_0,\cdots ,x_m)\in \R^{m+1}$ is a non-trivial common root
of $f_0,\cdots ,f_n$, then $x_m\not= 0$.
%%%
\par
%%%
$\II$
%If $(f_0,\cdots ,f_n)\in \Sigma_d$, then the number of the
%distinct common real roots of $\{f_0,\cdots ,f_n\}$ is at most $d^m$.
%%%%
%\par
%%%%(iii)%%%
%$\III$
 $A_d^{\C}(m,n;g)$ and
$A_d^{\C}(m,n)$ are simply connected if $m<2n$.
\end{lemma}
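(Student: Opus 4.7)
\emph{Proof proposal.} For part (i), the plan is a direct substitution. Since $(f_0,\ldots,f_n)\in A_d^*=B_0\times\cdots\times B_n$, each $f_k$ has the form $f_k=g_k+z_mh_k$ for some $h_k\in\mathcal{H}_{d-1}$. Evaluating at any ${\bf x}$ with $x_m=0$ kills the $z_mh_k$ term, so $f_k({\bf x})=g_k(x_0,\ldots,x_{m-1})$, and a non-trivial such ${\bf x}$ would then produce a non-trivial common real root of $(g_0,\ldots,g_n)\in A_d^{\C}(m-1,n)$, contradicting the defining property of that space. Hence $x_m\neq 0$.

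For part (ii) the plan is to show that the appropriate discriminants are semi-algebraic subsets of real codimension at least three in their ambient affine spaces; this forces simple connectivity of the complements by a standard transversality argument, since loops and their null-homotopies can be perturbed off any semi-algebraic set of codimension $\geq 3$.

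I will handle $A_d^{\C}(m,n;g)=A_d^*\setminus\Sigma_d$ first. By (i), every point of $\Sigma_d$ admits a non-trivial real common root with $x_m\neq 0$, and after scaling I may assume $x_m=1$. I then introduce the incidence set
\[
\tilde{\Sigma}_d=\{(f,{\bf y})\in A_d^*\times\R^m : f_k(y_0,\ldots,y_{m-1},1)=0\text{ for every }k=0,\ldots,n\},
\]
whose projection onto the first factor is $\Sigma_d$. For each fixed ${\bf y}\in\R^m$, the condition on $f_k\in B_k$ reads $g_k({\bf y})+h_k(y_0,\ldots,y_{m-1},1)=0$; this is a single $\C$-linear equation in $h_k$, and the evaluation $h_k\mapsto h_k(y_0,\ldots,y_{m-1},1)$ is surjective from $\mathcal{H}_{d-1}$ onto $\C$ (note that $z_m^{d-1}$ evaluates to $1$ there). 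Since the $n+1$ conditions act on independent factors $B_0,\ldots,B_n$, the fiber in $A_d^*$ has real codimension exactly $2(n+1)$. Hence $\dim_{\R}\tilde{\Sigma}_d=N_d^*+m-2(n+1)$ and so $\Sigma_d$ has real codimension at least $2n+2-m$ in $A_d^*$; when $m<2n$ this is $\geq 3$, as required.

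The argument for $A_d^{\C}(m,n)$ will be essentially the same and does not require (i): one parametrizes common real roots of $(f_0,\ldots,f_n)$ by ${\bf x}\in S^m$ (dimension $m$), and for each such ${\bf x}$ the evaluation $f_k\mapsto f_k({\bf x})$ is a surjective affine map from each of $\mathcal{H}_d^0$, $\mathcal{H}_d^1$ onto $\C$, because one can always exhibit a degree-$d$ monomial other than $z_0^d$ that is non-zero at ${\bf x}$. The same count again yields codimension at least $2n+2-m\geq 3$. The main obstacle I anticipate is not conceptual but bookkeeping: verifying that the evaluation maps really are surjective uniformly after the affine normalization at $z_0^d$ (a short case split on whether $x_0=0$ suffices), and citing the precise form of the semi-algebraic general-position lemma that upgrades the codimension bound to simple connectivity of the complement.
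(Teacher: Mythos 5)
Your proposal is correct and follows essentially the same route as the paper's proof, which is deferred to the analogous Lemma 4.1 of \cite{AKY1}: part (i) by substituting $x_m=0$ into $f_k=g_k+z_mh_k$ to contradict $(g_0,\dots,g_n)\in A_d^{\C}(m-1,n)$, and part (ii) by observing that the discriminants are closed semi-algebraic sets of real codimension at least $2n+2-m\geq 3$ (via the incidence-variety dimension count, using that each evaluation condition is a surjective complex-linear, hence real codimension $2$, condition on each factor), so their complements are simply connected by general position. The details you supply, including the case split at $x_0$ for $A_d^{\C}(m,n)$, are exactly the ones the cited lemma carries out in the real setting.
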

%%%%%%%%%%%%%
%%%
\begin{proof}
The proof is completely analogous to that of [\cite{AKY1}, Lemma 4.1].
%(i)
%If   $x_m=0$ then, since ${f_k}\vert_{z_m=0}=g_k$, $(x_0,\cdots ,x_{m-1})\not= (0,\cdots ,0)$ 
%is a non-trivial common root of $g_0,\cdots ,g_n$, which is a contradiction.
%%%%
%\par
%(ii)
%For each $0\leq k\leq n$,
%let $L_k\subset \RP^m$ and $L_k^{\p}\subset \RP^m$ be the hypersurfaces defined by
%$L_k=\{[x_0:\cdots :x_m]\in\RP^m: f_k(x_0,\cdots ,x_m)=0\}$
%and
%$L_k^{\p}=\{[x_0:\cdots :x_m]\in\RP^m: g_k(x_0,\cdots ,x_m)=0\}$.
%Since 
%$$
%\{[x_0:\cdots :x_m]\in \RP^m:x_m=0\}\cap
%\big(\cap_{k=0}^nL_k\big)=\cap_{k=0}^nL_k^{\p}=\emptyset ,
%$$
%it follows from the Projective Dimension Theorem (\cite{Hart}) that
%$V=\cap_{k=0}^nL_k$ is a finite set.
%Applying B$\hat{\mbox{e}}$zout's  Theorem [\cite{Fulton}, page 145 (3)]
%we deduce that $V$ has at most $d^m$ points.
%%%%
%\par
%(iii)
%Since the codimension of $A_d^{\C}(m,n;g)$ in $A_d^*$ is
%$2n-m+1$, $A_d(m,n;g)$ is simply connected if $m<2n$.
%Similarly, the fact that the codimension of $A_d^{\C}(m,n)$ in $A_d$ is also 
%$2n-m+1$ implies that
%$A_d^{\C}(m,n)$ is simply connected if $m<2n$.
\end{proof}
%%%%(End of Proof of Lemma 3.2)%%

%%%%%
%%(Definition )%%
\begin{defi}
%\par
%{\bf Definition. }
%%%%%%
{\rm
(i) For a finite set ${\bf x} =\{x_1,\cdots ,x_l\}\subset \R^N$,
let $\sigma ({\bf x})$ denote the convex hull spanned by ${\bf x}.$
Note that   $\sigma ({\bf x})$
is an $(l-1)$-dimensional simplex if and only if vectors
$\{x_k-x_1\}_{k=2}^l$ are linearly independent.
In particular, it is in general position if $x_1,\cdots ,x_l$ are linearly independent over $\R$.
%%%
\par
%%%
(ii) Let $h:X\to Y$ be a surjective map such that
$h^{-1}(y)$ is a finite set for any $y\in Y$, and let
$i:X\to \R^n$ be an embedding.
Let  $\mathcal{X}^{\Delta}$  and $h^{\Delta}:{\mathcal{X}}^{\Delta}\to Y$ 
denote the space and the map
defined by
%%%
$$
\mathcal{X}^{\Delta}=
\big\{(y,w)\in Y\times \R^N:
w\in \sigma (i(h^{-1}(y)))
\big\}\subset Y\times \R^N,
\ h^{\Delta}(y,w)=y.
$$
The pair $(\mathcal{X}^{\Delta},h^{\Delta})$ is called
{\it a simplicial resolution of }$(h,i)$.
In particular, $(\mathcal{X}^{\Delta},h^{\Delta})$
is called {\it a non-degenerate simplicial resolution} if for each $y\in Y$
and any $k$ points of $i(h^{-1}(y))$ span $(k-1)$-dimensional simplex of $\R^N$.
%%%%
\par
(iii)
For each $k\geq 0$, let $\mathcal{X}^{\Delta}_k\subset \mathcal{X}^{\Delta}$ be the subspace
given by 
$$
\mathcal{X}_k^{\Delta}=\big\{(y,\omega)\in \mathcal{X}^{\Delta}:
\omega\in\sigma ({\bf u}),
{\bf u}=\{u_1,\cdots ,u_l\}\subset i(h^{-1}(y)),l\leq k\big\}.
$$
%%%%
We make identification $X=\mathcal{X}^{\Delta}_1$ by identifying the point $x\in X$ with the pair
$(h(x),i(x))\in \mathcal{X}^{\Delta}_1$,
and we note that  there is an increasing filtration
$$
\emptyset =
\mathcal{X}^{\Delta}_0\subset X=\mathcal{X}^{\Delta}_1\subset \mathcal{X}^{\Delta}_2\subset
\cdots \subset \mathcal{X}^{\Delta}_k\subset \mathcal{X}^{\Delta}_{k+1}\subset
\cdots \subset \bigcup_{k= 0}^{\infty}\mathcal{X}^{\Delta}_k=\mathcal{X}^{\Delta}.
$$
}
\end{defi}

%%%(Lemma 3.4)%%
\begin{lemma}[\cite{Mo2}, \cite{Va}]\label{lemma: simp}
%%%%%%%%
Let $h:X\to Y$ be a surjective map such that
$h^{-1}(y)$ is a finite set for any $y\in Y$, and let
$i:X\to \R^N$ be an embedding.
\par
%%(i)%%
$\I$
If $X$ and $Y$ are closed semi-algebraic spaces and the
two maps $h$, $i$ are polynomial maps, then
$h^{\Delta}:\mathcal{X}^{\Delta}\stackrel{\simeq}{\rightarrow}Y$
is a homotopy equivalence.
%%%%
\par
$\II$
There is an embedding $j:X\to \R^M$ such that the associated simplicial resolution
$(\tilde{\mathcal{X}}^{\Delta},\tilde{h}^{\Delta})$
of $(h,j)$ is non-degenerate, and
the space $\tilde{\mathcal{X}}^{\Delta}$
is uniquely determined up to homeomorphism.
Moreover,
there is a filtration preserving homotopy equivalence
$q^{\Delta}:\tilde{\mathcal{X}}^{\Delta}\stackrel{\simeq}{\rightarrow}{\mathcal{X}}^{\Delta}$ such that $q^{\Delta}\vert X=\mbox{id}_X$.
\qed
%%%
\end{lemma}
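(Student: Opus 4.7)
The plan is to handle the two parts in sequence, both resting on the observation that the fibers of $h^\Delta$ are convex simplices and therefore contractible. For part (i), note that by definition $(h^\Delta)^{-1}(y) = \{y\}\times \sigma(i(h^{-1}(y)))$ is a single affine simplex, hence contractible. Under the semi-algebraic hypotheses on $h$, $i$, $X$, $Y$, the space $\mathcal{X}^\Delta \subset Y\times \R^N$ is a closed semi-algebraic set and $h^\Delta$ is a proper semi-algebraic surjection. I would then invoke the standard consequence of semi-algebraic triangulation that a proper semi-algebraic surjection with contractible fibers is a homotopy equivalence, which is precisely the mechanism used by Vassiliev in his original treatment of simplicial resolutions. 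A more hands-on variant stratifies $Y$ by $Y_k=\{y:|h^{-1}(y)|=k\}$, observes that $(h^\Delta)^{-1}(Y_k)\to Y_k$ is locally a trivial $(k-1)$-simplex bundle, and inducts upward on $k$.

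For part (ii), to produce the non-degenerate $j$ I would first choose any proper embedding $j_0:X\hookrightarrow \R^{M_0}$ and then compose with a generic linear embedding $\R^{M_0}\hookrightarrow \R^M$ with $M$ sufficiently large that, for every $y\in Y$ and every finite subset $\{x_1,\dots,x_k\}\subset h^{-1}(y)$ of size at most some uniform bound, the images $j(x_1),\dots,j(x_k)$ are affinely independent; this is an open and dense condition on the linear embedding. Uniqueness of $\tilde{\mathcal{X}}^\Delta$ up to homeomorphism is essentially combinatorial: the filtration piece $\tilde{\mathcal{X}}_k^\Delta\setminus \tilde{\mathcal{X}}_{k-1}^\Delta$ carries the structure of an open $(k-1)$-simplex bundle over $\{(y;\{x_1,\dots,x_k\}):h(x_r)=y\}/S_k$, a space depending only on $h$ and not on the choice of generic embedding.

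To construct $q^\Delta$, I would define it affinely on each simplex by sending $\sigma(\{j(x_1),\dots,j(x_k)\})$ to $\sigma(\{i(x_1),\dots,i(x_k)\})$ via the unique affine extension of $j(x_r)\mapsto i(x_r)$; this is well-defined, preserves filtrations, and restricts to the identity on $X=\tilde{\mathcal{X}}_1^\Delta$. The main obstacle is proving $q^\Delta$ is a homotopy equivalence, because on an individual simplex it may collapse a non-degenerate $(k-1)$-simplex onto a lower-dimensional face of $\mathcal{X}^\Delta$ whenever the $i(x_r)$ fail to be in general position. I would handle this by induction on $k$: assuming $q^\Delta$ restricts to an equivalence $\tilde{\mathcal{X}}_{k-1}^\Delta\to \mathcal{X}_{k-1}^\Delta$, compare successive filtration quotients of the two resolutions, noting that on each fiber the induced map is an affine collapse of a simplex onto its image, which is contractible. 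Combined with the fact that both inclusions $\tilde{\mathcal{X}}_{k-1}^\Delta\hookrightarrow \tilde{\mathcal{X}}_k^\Delta$ and $\mathcal{X}_{k-1}^\Delta\hookrightarrow \mathcal{X}_k^\Delta$ are cofibrations (the first from the CW structure on the non-degenerate resolution, the second via a compatible semi-algebraic triangulation), the five lemma applied to the long exact sequences of the pairs concludes the induction.
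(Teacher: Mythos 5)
The paper offers no proof of this lemma at all: it is quoted from Mostovoy \cite{Mo2} and Vassiliev \cite{Va} with a \qed, so there is nothing in-paper to compare against; I can only assess your argument on its own terms. Your treatment of part (i) is the standard one and is fine in outline: each fiber of $h^{\Delta}$ is the convex hull of a finite point set, hence a compact contractible set, and a proper surjective semi-algebraic map with contractible point-inverses is a homotopy equivalence (this is exactly Vassiliev's mechanism; the stratification-by-cardinality variant also works but needs care because the strata $Y_k$ are only locally closed).

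Part (ii) contains a genuine error at the very first step. You propose to achieve non-degeneracy by composing a fixed embedding $j_0\colon X\to\R^{M_0}$ with a \emph{generic linear} embedding $\R^{M_0}\hookrightarrow\R^{M}$. This cannot work: a linear (or affine) map carries affine subspaces to affine subspaces, so if $j_0(x_1),\dots,j_0(x_k)$ are affinely dependent, then their images under \emph{every} linear map remain affinely dependent. The condition you describe as ``open and dense'' on the space of linear embeddings is empty whenever $j_0$ is already degenerate on some fiber, which is precisely the case one needs to repair. The correct device --- and the one this paper itself relies on in Section \ref{section 3} --- is a \emph{nonlinear} embedding of Veronese type, $v\mapsto\bigl(\varphi_I(v)\bigr)_I$ over all monomials $\varphi_I$ of degree at most some bound; for such an embedding any $r$ distinct points automatically have affinely independent images (this is the content of Lemma \ref{lemma: simplex*}(i)). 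With that correction, your description of $q^{\Delta}$ as the fiberwise affine collapse, and the induction over the filtration comparing successive quotients via the five lemma, is the right shape of argument, though you should note that contractibility of the image of each collapsed simplex is not by itself enough --- one needs the point-inverses of the collapse to be contractible (cell-likeness) together with properness, or alternatively one interpolates through the resolution associated to the product embedding $(i,j)$ and projects off the second factor.
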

%%%%%(End of Lemma 3.4)%%

%%%
\par
{\it Remark. }
Even when $h$ is not finite to one,  it is still possible to define its simplicial resolution and associated non-degenerate one.
We omit the details of this construction and refer the reader to  \cite{Mo2}.
%%%%
%\par\vspace{3mm}

%%%(Definition for Z_d for A_d(m,n;g)%%
\par
%%(Definition 3.5)%%%
%{\bf Definition. }
%%%
\begin{defi}
{\rm
Let  $Z_d\subset \Sigma_d\times \R^m$
denote %the subspace
the tautological normalization of $\Sigma_d$
consisting of all pairs 
$({\bf f},{\bf x})=((f_0,\cdots ,f_n),
(x_0,\cdots ,x_{m-1}))\in \Sigma_d\times\R^m$
such that the polynomials $f_0,\cdots ,f_n$ have a non-trivial common real root
$({\bf x},1)=(x_0,\cdots ,x_{m-1},1)$.
%%%
%\par
%%%%
Projection on the first factor  gives a surjective map
$\pi_d^{\p} :Z_d\to\Sigma_d.$
%%%
\par
%%%%%(ii)%%%
Let $\phi_d:A_d^*\stackrel{\cong}{\rightarrow}\R^{N_d^*}$
be any fixed homeomorphism, and
let ${\rm H}_d$ be the set consisting of all
monomials $\varphi_I=z^I=z_0^{i_0}z_1^{i_1}\cdots z_m^{i_m}$
of degree $d$
($I=(i_0,i_1,\cdots ,i_m)\in\Z^{m+1}_{\geq 0}$,
$\vert I\vert =\sum_{k=0}^mi_k = d$).
%%%%%%%
%\par
%%%%%%
Next, we define the  Veronese embedding, which will play a key role in our argument. 
Let
$\psi_d^* :\R^m\to \R^{M_d}$
be the map given by
$\dis
\psi_d^* (x_0,\cdots ,x_{m-1})=
\Big(\varphi_I(x_0,\cdots ,x_{m-1},1)
\Big)_{\varphi_I\in {\rm H}_d},$
where $M_d:=
\binom{d+m}{m}.$
%%%%
Now define the embedding
$\Phi_d^* :Z_d^* \to \R^{N_d^*+M_d}$ by
$$
\Phi_d^*  ((f_0,\cdots ,f_n),{\bf x})=
(\phi_d^* (f_0,\cdots ,f_n),\psi_d^* ({\bf x})).
$$
%where  
%$L_d^* :=N_d^* +M_d $.
%%%%%
}
\end{defi}

%%%(new)%%%
%%%%(Definition of simplicial resolutions)%%
%\par\vspace{2mm}
%
%%(Definition 3.6)%%
%{\bf Definition. }
%%%
\begin{defi}
{\rm
Let 
$(\SZ(d),\ ^{\p}{\pi_d^{\Delta}}:\SZ(d)\to\Sigma_d)$ 
and
$(\tilde{\SZ}(d),\ ^{\p}{\tilde{\pi}_d^{\Delta}}:\tilde{\SZ}(d)\to\Sigma_d)$ 
denote the simplicial resolution of
$(\pi_d^{\p},\Phi_d^*)$ 
and the corresponding non-degenerate
simplicial resolution
with the natural increasing filtrations
$$
\begin{cases}
\dis
\SZ(d)_0=\emptyset
\subset \SZ(d)_1\subset 
\SZ(d)_2\subset \cdots
\subset 
\SZ(d)=\bigcup_{k= 0}^{\infty}\SZ (d)_k,
\\
\dis
\tilde{\mathcal{Z}}^{\Delta}(d)_0
=\emptyset
\subset \tilde{\mathcal{Z}}^{\Delta}(d)_1
\subset
\tilde{\mathcal{Z}}^{\Delta}(d)_2\subset \cdots
\subset \tilde{\mathcal{Z}}^{\Delta}(d)=\bigcup_{k=0}^{\infty}
\tilde{\mathcal{Z}}^{\Delta}(d)_k.
\end{cases}
$$
}
\end{defi}
%%%%%%%%%
By Lemma \ref{lemma: simp}, the map
$^{\p}{\pi_d^{\Delta}}:
\SZ(d)\stackrel{\simeq}{\rightarrow}\Sigma_d$ is a homotopy equivalence. It is easy to see that it
extends to a homotopy equivalence
$^{\p}{\pi_{d+}^{\Delta}}:\SZ(d)_+\stackrel{\simeq}{\rightarrow}{\Sigma_{d+}},$
%%%%%
where $X_+$ denotes the one-point compactification of a
locally compact space $X$.
\par
Since
${\SZ (d)_r}_+/{\SZ (d)_{r-1}}_+
\cong (\SZ (d)_r\setminus \SZ (d)_{r-1})_+$,
we have the  Vassiliev type spectral sequence
%%%%%%%%%%%
$$
\big\{E_t^{r,s}(d),
d_t:E_t^{r,s}(d)\to E_t^{r+t,s+1-t}(d)
\big\}
\Rightarrow
H^{r+s}_c(\Sigma_d,\Z),
$$
where
$H_c^k(X,\Z)$ denotes the cohomology group with compact supports given by 
$H_c^k(X,\Z):= H^k(X_+,\Z)$ and
$E_1^{r,s}(d):=H^{r+s}_c(\SZ(d)_r\setminus\SZ(d)_{r-1},\Z)$.
%%%%%%%%%%%%%
\par
%%%%%

It follows from the Alexander duality that there is a natural
isomorphism
%%%(8)%%%
\begin{equation}\label{Al}
H_k(A_d^{\C}(m,n;g),\Z)\cong
H_c^{N_d^*-k-1}(\Sigma_d,\Z)
\quad
\mbox{for }1\leq k\leq N_d^*-2.
\end{equation}
%%%
Using (\ref{Al}) and
reindexing we obtain a
spectral sequence
%%
%%
%%%(9)%%%
\begin{eqnarray}\label{SS}
%%%%%%%%%%%%%%%%%%%
&&\big\{\E^t_{r,s}(d), \tilde{d}^t:\E^t_{r,s}(d)\to \E^t_{r+t,s+t-1}(d)\big\}
\Rightarrow H_{s-r}(A_d^{\C}(m,n;g),\Z)
\end{eqnarray}
%%%%%%%
if $s-r\leq N_d^*-2$,
where
$\E^1_{r,s}(d)=
H^{N_d^*+r-s-1}_c(\SZ(d)_r\setminus\SZ(d)_{r-1},\Z).$
%and $\tilde{E}^t_{r,s}(d)=E_t^{r,N_d^*-1-s}(d).$
%%%%%%

%%%%%%%(Lemma 3.7)%%%%%%%
\begin{lemma}\label{lemma: simplex*}
%%%%%%%
$\I$
If
$\{y_1,\cdots ,y_{r}\}\in C_r(\R^{m})$ is  any set of $r$ distinct points
in $\R^m$ and $r\leq d+1$, then the $r$ vectors $\{\psi_d^*(y_k):1\leq k\leq r\}$
are linearly independent
over $\R$ and
span an $(r-1)$-dimensional simplex in $\R^{M_d}$.
\par
$\II$
If $1\leq r\leq d+1$,
there is a homeomorphism
$$
\SZ (d)_r\setminus \SZ (d)_{r-1}\cong
\tilde{\SZ} (d)_r\setminus \tilde{\SZ} (d)_{r-1}
$$
%%%%%
\end{lemma}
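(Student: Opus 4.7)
My plan is to reduce Part (II) to Part (I) via the standard non-degenerate resolution machinery (Lemma 3.4(II)), so the real content lies in Part (I), which is a Lagrange-interpolation argument.

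For Part (I), I would first rewrite the condition. Setting $z_m = 1$ in any $\varphi_I \in {\rm H}_d$ produces the monomial $x_0^{i_0} \cdots x_{m-1}^{i_{m-1}}$, and as $I = (i_0, \ldots, i_m)$ ranges over the indices with $|I|=d$, the truncated tuple $(i_0, \ldots, i_{m-1})$ runs through all non-negative $m$-tuples of sum $\leq d$. Hence a linear relation $\sum_{k=1}^{r} \lambda_k \psi_d^*(y_k) = 0$ in $\R^{M_d}$ is equivalent to
$$\sum_{k=1}^{r} \lambda_k\, q(y_k) = 0 \quad \text{for every } q \in \R[x_0, \ldots, x_{m-1}] \text{ with } \deg q \leq d.$$
For each fixed $k_0$ I would exhibit a separating polynomial $q_{k_0}$ of degree $\leq r-1 \leq d$ with $q_{k_0}(y_j) = \delta_{jk_0}$: since $y_k \neq y_{k_0}$ for every $k\neq k_0$, pick an affine linear form $\ell_k$ with $\ell_k(y_k)=0$ and $\ell_k(y_{k_0})\neq 0$, and set $q_{k_0} = \prod_{k \neq k_0} \ell_k / \ell_k(y_{k_0})$. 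Substituting $q_{k_0}$ into the displayed identity yields $\lambda_{k_0} = 0$, so the vectors $\psi_d^*(y_1), \ldots, \psi_d^*(y_r)$ are linearly independent over $\R$. A standard calculation then shows the differences $\psi_d^*(y_j) - \psi_d^*(y_1)$ for $j = 2, \ldots, r$ are also linearly independent, which is exactly the condition in Definition 3.3(i) guaranteeing that $\sigma(\psi_d^*(y_1), \ldots, \psi_d^*(y_r))$ is a genuine $(r-1)$-simplex.

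For Part (II), I would invoke the filtration-preserving homotopy equivalence $q^{\Delta} : \tilde{\SZ}(d) \to \SZ(d)$ from Lemma 3.4(II). By construction, $q^{\Delta}$ sends each open $(l-1)$-cell of $\tilde{\SZ}(d)_l \setminus \tilde{\SZ}(d)_{l-1}$ attached to a configuration of $l$ points in a fiber of $\pi_d^{\p}$ onto the relative interior of the convex hull of the corresponding $\Phi_d^*$-images, and is a homeomorphism on that cell precisely when this convex hull is a non-degenerate $(l-1)$-simplex. Since two points in the same fiber of $\pi_d^{\p}$ share the same $\phi_d^*$-component, affine independence of $l$ such $\Phi_d^*$-images in $\R^{N_d^* + M_d}$ reduces to affine independence of their $\psi_d^*$-components in $\R^{M_d}$, which by Part (I) holds whenever $l \leq d+1$. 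No degeneracy therefore occurs in the first $d+1$ filtration levels, and $q^{\Delta}$ restricts to a homeomorphism $\tilde{\SZ}(d)_r \setminus \tilde{\SZ}(d)_{r-1} \xrightarrow{\cong} \SZ(d)_r \setminus \SZ(d)_{r-1}$ for every $r \leq d+1$.

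The only delicate point is the degree bookkeeping in Part (I): the hypothesis $r \leq d+1$ is exactly what is needed so that the $(r-1)$-fold product of affine linear forms remains of degree $\leq d$, and this is precisely where the argument would fail for larger $r$. Beyond this threshold the two spaces $\SZ(d)_r$ and $\tilde{\SZ}(d)_r$ genuinely diverge, and one must work with the non-degenerate resolution throughout.
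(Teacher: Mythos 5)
Your argument is correct: the Lagrange-interpolation construction of separating polynomials of degree $r-1\le d$ gives linear (hence affine) independence of the $\psi_d^*(y_k)$, and the reduction of (II) to (I) via the shared $\phi_d^*$-component of points in a fiber of $\pi_d^{\p}$ is exactly the standard non-degeneracy argument. The paper itself only cites the analogous Lemma 4.3 of \cite{AKY1}, and your proof is essentially the general-position argument used there, so there is nothing to add.
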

%%%%%%%
\begin{proof}
%%%%%%%
The proof is completely analogous to that of
[\cite{AKY1}, Lemma 4.3].
%Writing $y_k=(y_{0,k},\cdots ,y_{m-1,k})$ for 
%$1\leq k\leq  r$, for each $i\not= j$ 
%we can find a number $l$ $(0\leq l\leq m-1)$
%such that $y_{l,i}\not= y_{l,j}$. By a linear change of coordinates we can ensure that $l=0$ for all $i,j$.
%%%%
%The assertion (i) follows form the fact that the Vandermonde matrix constructed from the powers $y_{0,i}$ is non-singular. 
%The assertion (ii) also easily follows from (i).
\end{proof}
%%%(End of proof of Lemma 3.7)%%%%

%%%%(Lemma 3.8)%%%%
\begin{lemma}\label{lemma: vector bundle*}
%%%%%%%%%%%%%%%%%%
%%%
If $1\leq r\leq \lfloor\frac{d+1}{2}\rfloor$,
$\SZ(d)_r\setminus\SZ(d)_{r-1}$
is homeomorphic to the total space of a real
vector bundle $\xi_{d,r}$ over $C_r(\R^m)$ with rank 
$l_{d,r}^*=N_d^*-(2n+1)r-1$.
%%%%%%%%%%%%%%%%%%
\end{lemma}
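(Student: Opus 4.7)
The plan is to adapt the argument of [\cite{AKY1}, Lemma 4.4] to the complex setting. First, by Lemma \ref{lemma: simplex*}(ii) (applicable since $r\leq \lfloor(d+1)/2\rfloor \leq d+1$), it suffices to exhibit the stated vector bundle structure on the corresponding stratum $\tilde{\mathcal{Z}}^{\Delta}(d)_r \setminus \tilde{\mathcal{Z}}^{\Delta}(d)_{r-1}$ of the non-degenerate simplicial resolution. I would define a projection
$$p\colon \tilde{\mathcal{Z}}^{\Delta}(d)_r \setminus \tilde{\mathcal{Z}}^{\Delta}(d)_{r-1} \longrightarrow C_r(\R^m)$$
sending $(f,w)$ to the unordered configuration $\{y_1,\ldots,y_r\}\subset \R^m$ whose Veronese images span the open $(r-1)$-simplex containing $w$ in its interior. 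Lemma \ref{lemma: number}(i) ensures that every real common root of an $f\in\Sigma_d$ has $x_m\neq 0$, so these affine coordinates in $\R^m$ are well defined.

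Next I would analyze the fiber of $p$. Over a configuration $\{y_1,\ldots,y_r\}$, the fiber splits as the product of $\mathrm{int}(\Delta^{r-1})$, the interior of the $(r-1)$-simplex spanned by $\{\psi_d^*(y_i)\}_{i=1}^r$ (genuinely $(r-1)$-dimensional by Lemma \ref{lemma: simplex*}(i), since $r\leq d+1$), and the affine subspace $L(y_1,\ldots,y_r)\subset A_d^*$ consisting of tuples $(f_0,\ldots,f_n)$ satisfying $f_k(y_i,1)=0$ for all $0\leq k\leq n$ and $1\leq i\leq r$. Writing $f_k=g_k+z_m h_k$ with $h_k\in\mathcal{H}_{d-1}$, these conditions become affine constraints on the $h_k$ whose underlying $\C$-linear parts are the evaluation functionals $h_k \mapsto h_k(y_i,1)$. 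Applying the degree $(d-1)$ analog of Lemma \ref{lemma: simplex*}(i) (valid since $r\leq\lfloor(d+1)/2\rfloor\leq d$), these $r$ evaluation functionals are $\C$-linearly independent, so the $2(n+1)r$ real equations cutting out $L(y_1,\ldots,y_r)$ are independent. Hence $L(y_1,\ldots,y_r)$ has real dimension $N_d^*-2(n+1)r$, and the total fiber has real dimension $(r-1)+N_d^*-2(n+1)r=N_d^*-(2n+1)r-1=l_{d,r}^*$.

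Local triviality of $p$ is then immediate by choosing continuously varying bases of $L(y_1,\ldots,y_r)$ together with barycentric coordinates on the open simplex over small contractible neighborhoods in $C_r(\R^m)$. The main technical obstacle is upgrading this manifestly local product structure to a genuine real vector bundle, since the open $(r-1)$-simplex carries no canonical linear structure and the base $C_r(\R^m)$ is an unordered configuration space (so ordering choices must be handled compatibly). Following [\cite{AKY1}, Lemma 4.4], I would handle this by producing a canonical section of $p$ (for instance, combining the barycenter of the open simplex with a tautologically chosen polynomial in $L(y_1,\ldots,y_r)$) and using it as the zero section; the resulting real vector bundle is $\xi_{d,r}$ of the asserted rank $l_{d,r}^*$. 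The remaining verifications are identical to those of [\cite{AKY1}, Lemma 4.4] and I would omit them.
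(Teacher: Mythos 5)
Your proposal is correct and follows essentially the same route as the paper (which simply defers to [\cite{AKY1}, Lemma 4.4]): pass to the non-degenerate resolution via Lemma \ref{lemma: simplex*}(ii), project to $C_r(\R^m)$, identify the fiber as (open $(r-1)$-simplex) $\times$ (affine solution space), and count $2(n+1)r$ independent real conditions coming from the $\C$-linear independence of the evaluation functionals on $\mathcal{H}_{d-1}$, giving rank $N_d^*-(2n+1)r-1$. The only informal point is the final upgrade from a fiber bundle with fiber $\mathring{\Delta}^{r-1}\times\R^{N_d^*-2(n+1)r}$ to a genuine vector bundle (the structure group is the $S_r$-permutation action on barycentric coordinates times affine maps, which is linearizable), but this is exactly the level of detail in the cited argument.
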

%%%%%%%%(Proof of Lemma 3.8)%%%
\begin{proof}
%%%%%%%%%%%%%%%%%%
The proof is completely analogous to that of [\cite{AKY1}, Lemma 4.4].
\end{proof}
%%%%(End of proof of Lemma 3.8)%%%%%
%%

%%(Lemma 3.9)%%%%%%%%
\begin{lemma}\label{lemma: range**}
%%%%%%%%%
All non-zero entries of $\E^1_{r,s}(d)$ are
situated in the range
%%%%
$s\geq r\big(2n+2-m\big)$ if $1\leq r\leq d^m$,
and $\E^1_{r,*}(d)=0$ if $r>d^m$.
%%%
\end{lemma}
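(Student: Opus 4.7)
The plan is to bound the cohomology of each stratum $\SZ(d)_r\setminus \SZ(d)_{r-1}$ by combining the vector bundle description of Lemma \ref{lemma: vector bundle*} in the low range with a direct semialgebraic dimension count in the intermediate range, and finally to argue that the filtration stabilizes beyond $r=d^m$.

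For $1\leq r\leq \lfloor(d+1)/2\rfloor$, I apply Lemma \ref{lemma: vector bundle*} together with the Thom isomorphism (with the local coefficients induced by the orientation bundle of $\xi_{d,r}$) and substitute $l^*_{d,r}=N_d^*-(2n+1)r-1$ into the defining identity $\E^1_{r,s}(d)=H^{N_d^*+r-s-1}_c(\SZ(d)_r\setminus\SZ(d)_{r-1},\Z)$ to obtain
\begin{equation*}
\E^1_{r,s}(d)\ \cong\ H^{r(2n+2)-s}_c\bigl(C_r(\R^m),\,(\pm\Z)^{\otimes(2n+1)}\bigr).
\end{equation*}
Since $C_r(\R^m)$ has real dimension $rm$, this vanishes unless $r(2n+2)-s\leq rm$, which is exactly $s\geq r(2n+2-m)$.

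For $\lfloor(d+1)/2\rfloor<r\leq d^m$, the vector bundle structure no longer applies directly, so I argue by a semialgebraic dimension count. The stratum $\SZ(d)_r\setminus\SZ(d)_{r-1}$ projects to $C_r(\R^m)$ (of real dimension $rm$), and the fiber over a generic configuration $\{u_1,\dots,u_r\}$ factors as the product of an open $(r-1)$-simplex with the affine subspace of $\mathbf{f}\in A_d^*$ vanishing at each $(u_i,1)$; since the $2(n+1)r$ real evaluation conditions are linearly independent on a Zariski-open subset of $C_r(\R^m)$, this affine subspace has codimension $2(n+1)r$ in $A_d^*$. Over the complementary non-generic stratum, where the evaluation conditions degenerate, the base has strictly smaller dimension, so the contribution remains bounded by the generic estimate. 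This yields
\begin{equation*}
\dim_\R\bigl(\SZ(d)_r\setminus\SZ(d)_{r-1}\bigr)\leq rm+\bigl(N_d^*-2(n+1)r\bigr)+(r-1)=N_d^*+r(m-2n-1)-1,
\end{equation*}
and the vanishing of compactly supported cohomology above the dimension forces $\E^1_{r,s}(d)=0$ whenever $N_d^*+r-s-1>N_d^*+r(m-2n-1)-1$, equivalently $s<r(2n+2-m)$.

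Finally, for $r>d^m$, I argue that $\SZ(d)_r=\SZ(d)_{r-1}$ by showing no $\mathbf{f}\in\Sigma_d$ admits more than $d^m$ distinct common real roots in $\RP^m$. By Lemma \ref{lemma: number}(i), all non-trivial common real roots of such $\mathbf{f}$ lie in $\{z_m\neq 0\}$ and may be identified with points of $\R^m$, and B\'ezout's theorem applied to any $m$ of the $n+1$ polynomials $f_0,\dots,f_n$ bounds the number of isolated common complex (hence real) zeros by $d^m$. The main obstacle is the case $n+1<m$ (which can occur when $m=2n$): here an arbitrary choice of $m$-subset may cut out a positive-dimensional variety, and the cleanest remedy is to stratify $\Sigma_d$ so that tuples with positive-dimensional real zero locus form a proper semialgebraic subset of sufficiently low dimension that their contribution to the simplicial resolution is absorbed into earlier strata, reducing the claim to the $0$-dimensional case where B\'ezout applies.
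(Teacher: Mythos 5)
Your first step (the range $1\le r\le\lfloor (d+1)/2\rfloor$ via Lemma \ref{lemma: vector bundle*}, the Thom isomorphism and $\dim_{\R}C_r(\R^m)=rm$) is correct and is essentially the intended argument; the paper itself gives no details beyond citing [\cite{AKY1}, Lemma 4.5]. The genuine gaps are in your middle range and in $r>d^m$. For $\lfloor(d+1)/2\rfloor<r\le d^m$ you assert that the $2(n+1)r$ evaluation conditions are independent on a nonempty Zariski-open set of configurations; writing $f_k=g_k+z_mh_k$, these are evaluations of $h_k\in\mathcal{H}_{d-1}$ at the $r$ points, so independence is impossible once $r>\binom{m+d-1}{m}=\dim_{\C}\mathcal{H}_{d-1}$, and such $r$ do occur in the stated range (e.g. $m=2$, $d=3$: $6<r\le 9$), so there your ``generic'' stratum is empty and the estimate has no content. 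Worse, the sentence ``over the non-generic stratum the base has strictly smaller dimension, so the contribution remains bounded by the generic estimate'' is a non sequitur: each unit drop in the rank of the evaluation conditions raises the fiber dimension by $2(n+1)$, while the base is only cut down by however many of the consistency conditions (those forcing the affine system $h_k(u_i,1)=-g_k(u_i,1)$ to remain solvable, which involve the fixed boundary map $g$ and its minimal-degree property) are actually independent. Showing that this trade-off never exceeds the generic count is exactly the delicate point of the cited [\cite{AKY1}, Lemma 4.5]-type argument, and your sketch does not address it.

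For $r>d^m$ the assertion $\E^1_{r,*}(d)=0$ amounts to $\SZ(d)_r\setminus\SZ(d)_{r-1}=\emptyset$; a stratification in which tuples with positive-dimensional real zero locus form a low-dimensional subset cannot prove this, because low dimension does not force compactly supported cohomology to vanish in all degrees, so nothing gets ``absorbed into earlier strata'' at the $E^1$ level of this filtration. Thus the case you yourself single out as the main obstacle (tuples $\mathbf{f}\in\Sigma_d$ with infinitely many real common roots, which cannot be excluded by the common-factor argument when $m\ge 3$) is left unproved, and it is precisely the case that needs an argument (emptiness of the high strata, or a modified/truncated resolution). In addition, your B\'ezout step is misstated: you propose applying B\'ezout to ``$m$ of the $n+1$ polynomials,'' which is impossible when $m>n+1$ (allowed here since $m\le 2n$); the correct device is to pass to the $2(n+1)\ge m+1$ real polynomials $\mathrm{Re}\,f_k$, $\mathrm{Im}\,f_k$ of degree $d$, whose isolated common real zeros number at most $d^m$. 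So while your overall outline (Thom isomorphism in the stable range, dimension count beyond it, a B\'ezout-type bound for the cutoff at $d^m$) is the right shape, the two parts that carry the actual content of the lemma are not established.
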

%%%
\begin{proof}
%%%%%%
The proof is completely analogous to that of
[\cite{AKY1}, Lemma 4.5].
\end{proof}
%%%(End of Lemma 3.9)%%%
%
%%%%%%
%%%%%%(Lemma 3.10)%%
\begin{lemma}\label{lemma: range*}
If $1\leq r\leq \lfloor \frac{d+1}{2}\rfloor$, there is a natural isomorphism
$$
\E^1_{r,s}(d)\cong
%H_c^{(n+1)r-s}(C_r(X_K),{\cal O}_r)
H_{s-(2n-m+2)r}(C_r(\R^m),(\pm \Z)^{\otimes (2n-m+1)}).
$$
\end{lemma}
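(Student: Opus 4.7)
The plan is to combine the vector bundle presentation of $\SZ(d)_r\setminus\SZ(d)_{r-1}$ from Lemma \ref{lemma: vector bundle*} with the Thom isomorphism and Poincar\'e duality on $C_r(\R^m)$, following the pattern of the analogous real argument in \cite{AKY1}. By the definition of $\E^1_{r,s}(d)$ together with Lemma \ref{lemma: vector bundle*}, the Thom isomorphism for the rank-$l_{d,r}^*$ bundle $\xi_{d,r}\to C_r(\R^m)$ with orientation local system $\mathcal{O}(\xi_{d,r})$ gives
$$
\E^1_{r,s}(d)=H_c^{N_d^*+r-s-1}(\SZ(d)_r\setminus\SZ(d)_{r-1},\Z)\cong H_c^{(2n+2)r-s}(C_r(\R^m),\mathcal{O}(\xi_{d,r})),
$$
using the arithmetic identity $N_d^*+r-s-1-l_{d,r}^*=(2n+2)r-s$.

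The next step is to identify $\mathcal{O}(\xi_{d,r})$ by inspecting the fiber over a configuration $\{y_1,\cdots ,y_r\}\in C_r(\R^m)$, which splits as the direct sum of (a) a $\C$-linear subspace of the model space $(\mathcal{H}_{d-1})^{n+1}$ of $A_d^*$, namely the kernel of evaluation at the $r$ lifts $(y_j,1)\in\R^{m+1}$, and (b) the interior of the $(r-1)$-simplex spanned by $\psi_d^*(y_1),\cdots ,\psi_d^*(y_r)$. The first summand is canonically oriented by its $\C$-structure and its orientation is preserved by the $S_r$-monodromy corresponding to the relabellings $(y_1,\cdots ,y_r)\leftrightarrow(y_{\sigma(1)},\cdots ,y_{\sigma(r)})$; the second changes orientation by $\mathrm{sgn}(\sigma)$. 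Hence $\mathcal{O}(\xi_{d,r})\cong \pm\Z$. Applying Poincar\'e duality on the open $mr$-manifold $C_r(\R^m)$, whose orientation sheaf is $(\pm\Z)^{\otimes m}$ (each transposition of ordered points permutes two blocks of $m$ coordinates and contributes $(-1)^m$ to the Jacobian), then yields
$$
H_c^{(2n+2)r-s}(C_r(\R^m),\pm\Z)\cong H_{s-(2n-m+2)r}(C_r(\R^m),(\pm\Z)^{\otimes(m+1)}),
$$
and since $(\pm\Z)^{\otimes 2}$ is trivial and $(m+1)-(2n-m+1)=2(m-n)$ is even, $(\pm\Z)^{\otimes(m+1)}$ and $(\pm\Z)^{\otimes(2n-m+1)}$ agree as local systems, giving the claimed isomorphism.

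The main point requiring care is the identification of $\mathcal{O}(\xi_{d,r})$: the feature distinguishing this from the real case of \cite{AKY1} is that the polynomial summand inherits a $\C$-linear structure and therefore contributes trivially to the orientation twist, so the only sign contribution comes from the simplex. This is precisely what converts the real exponent $(n-m+1)$ of the twist into the present $(2n-m+1)$, consistent with the substitution $n\mapsto 2n+1$ that relates the real and complex formulations throughout this paper; once the orientation bookkeeping is settled, the remainder is a routine combination of the Thom isomorphism and Poincar\'e duality.
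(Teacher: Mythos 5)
Your proof is correct and follows essentially the same route as the paper's (which merely invokes "Thom isomorphism and Poincar\'e duality," referring to the analogous lemma of \cite{AKY1}): you combine Lemma \ref{lemma: vector bundle*} with the Thom isomorphism for $\xi_{d,r}$ and twisted Poincar\'e duality on $C_r(\R^m)$, and your identification of the orientation system (the $\C$-linear kernel part orientable, the simplex factor contributing the sign character) together with the index arithmetic is accurate. The only blemish is in your closing comparison: the twist exponent in the real case is $n-m$ (as in $G^d_{m,n}$), not $n-m+1$, but this remark plays no role in the argument.
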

%\begin{proof}
%%%(Proof of Lemma 3.10)%%
\begin{proof}
%%%%%%
By using the Thom isomorphism and Poincar\'e duality,
we obtain the desired isomorphism.
\end{proof}
%%%%(End of proof of Lemma 3.10)%%%
\par\vspace{3mm}\par

%%%%%%%%%%%%%%%%%%%%%%%%%%%%%%%%%
Now we recall the spectral sequence constructed by V. Vassiliev
\cite{Va}.
From now on, we will assume that $m\leq 2n$, and
% and $X$ is 
%a finite $m$ dimensional simplicial complex such that
%$X$ is $C^{\infty}$-imbedded in $\R^L$.
%%
%Considering $S^{2n+1}$ 
%and $X$ as subspaces $S^{2n+1}\subset \R^{2n+2},  X\subset \R^L$, 
we identify $\Map (S^m,S^{2n+1})$ with the space
$\Map (S^m,\R^{2n+2}\setminus \{{\bf 0}_{2n+2}\})$.
We also choose  a map
$\varphi :S^m\to \R^{2n+2}\setminus\{{\bf 0}_{2n+2}\}$
and fix it.
%%
%%%%%
Observe that $\Map (S^m,\R^{2n+2})$ is a linear space
and consider
the complements
%$$
%\begin{cases}
${\frak A}_m^n=\Map (S^m,\R^{2n+2})\setminus \Map (S^m,S^{2n+1})$
and
%\\
$\tilde{\frak A}_m^n=\Map^* (S^m,\R^{2n+2})\setminus \Map^*(S^m,S^{2n+1}).$
%\end{cases}
%$$
\par
Note that  ${\frak A}_m^n$ consists of all continuous maps
$f:S^m\to \R^{2n+2}$ passing through ${\bf 0}_{2n+2}.$
%%%%
We will denote by $\Theta^k_{\varphi}\subset \Map (S^m,\R^{2n+2})$ the subspace
consisting of all maps $f$ of the forms $f=\varphi +p$,
where $p$ is the restriction to $S^m$ of a polynomial map
$\R^{m+1}\to \R^{2n+2}$ of degree $\leq k$.
Let
$\Theta^k\subset \Theta^k_{\varphi}$ denote the subspace
consisting of all $f\in \Theta_{\varphi}^k$
passing through ${\bf 0}_{n+1}.$ 
%which
% intersects ${\bf 0}_{n+1}$.
In [\cite{Va}, page 111-112] Vassiliev uses the space 
$\Theta^k$ as a finite dimensional approximation of ${\frak A}^n_m$.%\footnote
%%%%
%In fact,
%by using the Alexander duality, for any $N\geq 1$
%there is an isomorphism $H_N(\Map (S^m,S^n),\Z)\cong H^{M_k^*-N}_c(\Theta^k,\Z)$
%if $k$ is a sufficiently large degree,
% where $M_k^*=\dim \Theta_{\varphi}^k=\sum_{j=0}^k\binom{j+m}{m}$.
%\footnote
%{Note that the proof 
% of this fact given by Vassiliev, makes use of the Stone-Weierstrass theorem, 
% so, although we are now not using the stable result of Section 2, something like it is also implicitly  involved here. }
%%%%
\par
%%%
Let $\tilde{\Theta}^k$ denote the subspace of $\Theta^k$ consisting of all maps 
$f\in \Theta^k$ which preserve
the base points.
%%%
By a variation of the preceding argument, Vassiliev also shows that  
$\tilde{\Theta}^k$ can be used as a finite dimensional approximation of  
$\tilde{\frak A}_m^n$ [\cite{Va}, page 112].
\par
%%%%%%%%
%%
%By choosing a sufficiently fine  polynomial approximation of $\tilde{\frak A}_m^n$, 
Let $\mathcal{X}_k\subset \tilde{\Theta}^k\times \R^{m+1}$ denote the %tautological normalization of $\tilde{\Theta}^k$
subspace consisting of all pairs 
$(f,\alpha)\in\tilde{\Theta}^k \times \R^{m+1}$
such that $f(\alpha )={\bf 0}_{2n+2}$, and
let $p_k:\mathcal{X}_k\to \tilde{\Theta}^k$ be the projection onto the first factor.
Then,  by making use of simplicial resolutions of the surjective maps 
$\{p_k:k\geq 1\}$,
one can construct an associated geometric resolution 
$\{\tilde{\frak A}_m^n\}$ of $\tilde{\frak A}_m^n,$ 
whose cohomology approximates the homology of $\Map^*(S^m,S^{2n+1})=\Omega^mS^{2n+1}$ to any desired dimension. 
%%%%
From the natural filtration on the approximating space
%%%()%%
%\begin{equation*}\label{geometric resolution}
%%%%%%%
$\dis F_1\subset F_2\subset F_3\subset 
\cdots \subset \bigcup_{k=1}^{\infty}F_k=\{\tilde{\frak A}_m^n\},$
%\end{equation*}
%%%
we obtain the associated spectral sequence:
%%
%%
%%%(10)%%%
\begin{equation}\label{SSS}
%%%%%%%
\{E^t_{r,s},d^t:
E^t_{r,s}\to
E^t_{r+t,s+t-1}\}
\Rightarrow
H_{s-r}(\Omega^mS^{2n+1},\Z).
\end{equation}
%%%
The following result follows easily from
[\cite{Va}, Theorem 2 (page 112) and (32) (page 114)].
%%%%%%%%%%%%%%%%%%%%%%

%%%(Lemma 3.11: Vassiliev's result)%%%
\begin{lemma}[\cite{Va}]
\label{lemma: Va}
%%%%%%%%%%%%%%%%
Let $2\leq m\leq 2n$ be integers and let
$X$ be a finite $m$-dimensional simplicial complex
with a fixed base point $x_0\in X$.
\begin{enumerate}
%%(i)%
\item[$\I$]
$E^1_{r,s}=
H_{s-(2n-m+2)r}(C_r(\R^m),(\pm \Z)^{\otimes (2n-m+1)})$ if $r\geq 1$,
and $E^1_{r,s}=0$ if $r<0$ or $s<0$ or $s<(2n-m+2)r$.
%%%(ii)%%
\item[$\II$]
For any $t\geq 1$,
$d^t=0:
E^t_{r,s}\to
E^t_{r+t,s+t-1}$ for all $(r,s)$,
and $E^1_{r,s}=E^{\infty}_{r,s}$.
%%%
Moreover,
for any $k\geq 1$, the extension problem for
the graded group
$Gr (H_k(\Omega^mS^{2n+1},\Z))=
\bigoplus_{r=1}^{\infty}E^{\infty}_{r,r+k}
=\bigoplus_{r=1}^{\infty}E^{1}_{r,r+k}$
is trivial and 
there is an isomorphism
%\begin{eqnarray*}
$$
H_{k}(\Omega^mS^{2n+1},\Z)
%&\cong&
% \bigoplus_{r=1}^{\infty}E^1_{r,k+r}=
%  \bigoplus_{r=1}^{\infty}E^{\infty}_{r,k+r}
%  \\ &=&
\cong
\bigoplus_{r=1}^{\infty}
H_{k-(2n-m+1)r}(C_r(\R^m),(\pm \Z)^{\otimes (2n-m+1)}).
\qed
$$
%\end{eqnarray*}
\end{enumerate}
\end{lemma}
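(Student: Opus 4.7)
The plan is to carry out Vassiliev's construction for $\tilde{\frak A}_m^n$ in direct analogy with Lemmas \ref{lemma: simplex*}--\ref{lemma: range*} above. One works inside the polynomial approximation $\tilde\Theta^k$ for $k$ sufficiently large (large enough that the $r$-th stratum is stable in the range of interest), forms the non-degenerate simplicial resolution of the surjection $p_k:\mathcal{X}_k\to\tilde\Theta^k$, and obtains the filtration $F_1\subset F_2\subset\cdots\subset\{\tilde{\frak A}_m^n\}$ giving rise to the spectral sequence (\ref{SSS}).

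The first step is to identify the $r$-th stratum $F_r\setminus F_{r-1}$. A point of it is specified by an unordered configuration $\{\alpha_1,\dots,\alpha_r\}\in C_r(\R^m)$ (using the chart $S^m\setminus\{\infty\}=\R^m$), an interior point of the open $(r-1)$-simplex they span, and a based map $f\in\tilde\Theta^k$ vanishing at each $\alpha_j$. In the stable range for $r$ relative to $k$, the $r$ vanishing conditions are linearly independent and cut out an affine subspace of real codimension $(2n+2)r$, so $F_r\setminus F_{r-1}$ is homeomorphic to the total space of a real vector bundle over $C_r(\R^m)$, twisted by the sign representation arising from the simultaneous permutation of the simplex vertices and of the $\R^{2n+2}$-valued vanishing data.

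The second step is to apply the Thom isomorphism with local coefficients combined with Alexander--Poincar\'e duality, in exactly the style of Lemma \ref{lemma: range*} above. After Vassiliev's reindexing, which converts cohomology with compact supports of the approximate discriminant into homology of its complement $\Omega^m S^{2n+1}$, one reads off the formula for $E^1_{r,s}$ in part (I); the coefficient system $(\pm\Z)^{\otimes(2n-m+1)}$ is the sign representation of $S_r$ raised to the power $\dim S^{2n-m+1}$. The vanishing $E^1_{r,s}=0$ for $r<0$, $s<0$, or $s<(2n-m+2)r$ is then a consequence of the dimension bound $\dim C_r(\R^m)=rm$ together with the degree shift.

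For part (II), the collapse and the triviality of extensions come from a direct rank count against the Snaith stable splitting
$$
\Omega^m S^{2n+1}\simeq_s\bigvee_{r=1}^{\infty} F(\R^m,r)_+\wedge_{S_r}\bigwedge^r S^{2n-m+1},
$$
already recalled in the introduction. This yields the a priori isomorphism $H_k(\Omega^m S^{2n+1},\Z)\cong\bigoplus_{r\geq 1}H_{k-(2n-m+1)r}(C_r(\R^m),(\pm\Z)^{\otimes(2n-m+1)})$, whose $r$-th summand matches $E^1_{r,r+k}$ on the nose. Since the spectral sequence abuts to a group of exactly this size in total degree $k=s-r$, every differential $d^t$ is forced to vanish and every extension must split. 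The main obstacle is the sign-twist bookkeeping: one must verify that the orientation sheaf produced by the Thom isomorphism on each stratum matches, up to a consistent sign convention, the twisting inherited from the Snaith side; once this is in place, the $E^1=E^\infty$ collapse and the splitting of the filtration follow automatically.
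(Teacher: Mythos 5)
The paper does not actually prove this lemma: it is quoted verbatim from Vassiliev (\cite{Va}, Theorem~2, p.~112 and formula (32), p.~114) and closed with a \verb|\qed|, so there is no internal proof to compare yours against step by step. What you have written is essentially a reconstruction of Vassiliev's own argument, and it runs exactly parallel to the paper's treatment of the discriminant $\Sigma_d$ in Section~\ref{section 3} (Lemmas \ref{lemma: simplex*}--\ref{lemma: range*} and the proof of Theorem \ref{thm: I*}): stratify the resolution, identify $F_r\setminus F_{r-1}$ as a twisted affine bundle over $C_r(\R^m)$ (an open $(r-1)$-simplex bundle contributing one sign twist, the linear vanishing conditions contributing none since $\R^{2n+2}$ is even-dimensional, and Poincar\'e duality on $C_r(\R^m)$ contributing $(\pm\Z)^{\otimes m}$), then apply the Thom isomorphism and Alexander duality and reindex. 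Part (I) and the vanishing line are fine as you state them, granted the stability of the strata in $\tilde\Theta^k$ for $k$ large.

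The only step that needs tightening is the counting argument in part (II). An abstract isomorphism of the abutment with $\bigoplus_r E^1_{r,r+k}$ ``of the same size'' over $\Z$ does not by itself kill the differentials: a differential can trade a free summand for torsion, or torsion for smaller torsion, in ways invisible to a naive cardinality or rank count. The standard repair --- and the one implicit in Vassiliev and in Corollary \ref{cor: I*} of this paper --- is to run the comparison with coefficients in $\Q$ and in every $\Z/p$ (the Snaith splitting and the stratum-by-stratum Thom/duality identifications are valid over any field), note that the vanishing line $s\geq(2n-m+2)r$ leaves only finitely many nonzero $E^1_{r,r+k}$ in each total degree, conclude that $\sum_r\dim_F E^1_{r,r+k}=\dim_F H_k(\Omega^mS^{2n+1};F)$ forces all differentials to vanish over every field, and then combine this with the integral isomorphism to get $E^1_{r,s}=E^\infty_{r,s}$ over $\Z$. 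For the extensions, invoke the fact that a short exact sequence of finitely generated abelian groups $0\to A\to B\to C\to 0$ with $B\cong A\oplus C$ necessarily splits. With those points made explicit, your outline is a correct proof of the quoted result.
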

%%
%%%%(End of Lemma 3.11)%%%

%\par
%{\bf Definition. }
%%(Definition 3.12)%%
\begin{defi}
{\rm
We identify $\Omega^mS^{2n+1}=\Omega^m(\C^{n+1}\setminus \{{\bf 0}\})$ and
define the map $j_d^{\p}:A_d^{\C}(m,n;g)\to \Omega^mS^{2n+1}$ by
$$
j_d^{\p}(f_0,\cdots ,f_n)(x_0,\cdots ,x_m)=
(f_0(x_0,\cdots ,x_m),\cdots\cdots ,f_n(x_0,\cdots ,x_m))
$$
for $((f_0,\cdots ,f_n),(x_0,\cdots ,x_m))\in A_d^{\C}(m,n;g)\times S^m$.
}
%%%
\end{defi}

Now, by applying the spectral sequence
(\ref{SSS}),  we prove the following result,  which plays a key role the proof of Theorem \ref{thm: I} .

%%%(Theorem 3.13)%%%
\begin{thm}\label{thm: I*}
%%%
Let $m,n\geq 2$ be positive integers such that $2\leq m\leq 2n$, and 
let $g\in \Alg_d^*(\RP^{m-1},\CP^n)$ be an algebraic map of minimal degree $d$.
%%%
\begin{enumerate}
%%(i)%%
\item[$\I$]
%%%
The map
$j_{d}^{\p}:A_d^{\C}(m,n;g)\to  \Omega^mS^{2n+1}$
is a homotopy equivalence through dimension $D_{\C}(d;m,n)$ if
$m<2n$ and a homology equivalence through dimension $D_{\C}(d;m,n)$
if $m=2n$. 
%%(ii)%%%
\item[$\II$]
For any $k\geq 1$, 
$H_k(A_d^{\C}(m,n;g),\Z)$ contains the subgroup
$$
G^d_{m,2n+1}=\bigoplus_{r=1}^{\lfloor \frac{d+1}{2}\rfloor}
H_{k-(2n-m+1)r}(C_r(\R^m),(\pm \Z)^{\otimes (2n-m+1)} )
$$
as a direct summand.
%%%%%%
\end{enumerate}
\end{thm}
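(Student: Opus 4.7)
The plan is to prove Theorem~\ref{thm: I*} by comparing the Vassiliev-type spectral sequence (\ref{SS}) with Vassiliev's spectral sequence (\ref{SSS}) via a morphism $\Phi$ induced by $j_d^{\p}$ that is compatible with $(j_d^{\p})_*$ on abutments. First I would verify that $j_d^{\p}$ is well defined: since $g$ has minimal degree $d$, every $(f_0,\dots,f_n)\in A_d^{\C}(m,n;g)$ has no nontrivial common real zero, so evaluation at points of $S^m$ produces a map into $\C^{n+1}\setminus\{{\bf 0}\}\simeq S^{2n+1}$. Moreover a nontrivial common real zero of $(f_0,\dots,f_n)$ is precisely a preimage of ${\bf 0}$ under the associated evaluation map, so the tautological normalization $Z_d$ of $\Sigma_d$ maps naturally into Vassiliev's incidence space $\mathcal{X}_k$; with compatible choices of Veronese embeddings one obtains a filtration-preserving map of non-degenerate simplicial resolutions inducing $\Phi$.

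By Lemmas~\ref{lemma: range*} and \ref{lemma: Va}(\I), for $1\le r\le \lfloor (d+1)/2\rfloor$ both $\E^1_{r,s}(d)$ and $E^1_{r,s}$ are canonically identified with $H_{s-(2n-m+2)r}(C_r(\R^m),(\pm\Z)^{\otimes(2n-m+1)})$, and under these identifications $\Phi^1$ is the identity. The range estimates in Lemmas~\ref{lemma: range**} and \ref{lemma: Va}(\I) moreover show that for $r>\lfloor (d+1)/2\rfloor$, non-zero entries of both $\E^1_{r,s}(d)$ and $E^1_{r,s}$ contribute only to $H_{s-r}$ with
\[
s-r\ \ge\ (2n-m+1)r\ \ge\ (2n-m+1)\bigl(\lfloor (d+1)/2\rfloor+1\bigr)\ =\ D_{\C}(d;m,n)+1,
\]
so the two $E^1$-pages agree under $\Phi$ in total degree $\le D_{\C}(d;m,n)$. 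Since Vassiliev's spectral sequence collapses at $E^1$ with trivial extensions (Lemma~\ref{lemma: Va}(\II)), the comparison forces the same behavior for (\ref{SS}) in this range, whence $(j_d^{\p})_*\colon H_k(A_d^{\C}(m,n;g),\Z)\to H_k(\Omega^m S^{2n+1},\Z)$ is an isomorphism for every $k\le D_{\C}(d;m,n)$.

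To upgrade to a homotopy equivalence when $m<2n$, I would combine the simple connectivity of $A_d^{\C}(m,n;g)$ from Lemma~\ref{lemma: number}(\II) with $\pi_1(\Omega^m S^{2n+1})=\pi_{m+1}(S^{2n+1})=0$ (valid for $m+1<2n+1$) and apply the relative Hurewicz and Whitehead theorems to pass from homology to homotopy equivalence through dimension $D_{\C}(d;m,n)$; in the critical case $m=2n$ the target loop space is not simply connected, and only the homology statement survives. Part~(\II) then follows by noting that Lemma~\ref{lemma: Va}(\II) exhibits $G^d_{m,2n+1}$ as the sum of the summands $E^{\infty}_{r,r+k}$ of $H_k(\Omega^m S^{2n+1},\Z)$ with $r\le \lfloor (d+1)/2\rfloor$; by the comparison these classes are detected via $(j_d^{\p})_*$, hence split off as a direct summand of $H_k(A_d^{\C}(m,n;g),\Z)$. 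The main obstacle is the rigorous construction of $\Phi$ as a filtration-preserving morphism of non-degenerate simplicial resolutions, which requires compatible Veronese embeddings and a careful identification of the strata via real-root incidence; I would follow the pattern of \cite{Mo2} and \cite{AKY1}, where the analogous comparison is carried out in the real target case.
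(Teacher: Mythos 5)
Your proposal follows essentially the same route as the paper: it compares the spectral sequence (\ref{SS}) with Vassiliev's spectral sequence (\ref{SSS}) via a filtration-preserving map of (non-degenerate) simplicial resolutions induced by $j_d^{\p}$, identifies the $E^1$-terms for $r\le\lfloor\frac{d+1}{2}\rfloor$ through the Thom isomorphism (Lemma \ref{lemma: range*} and Lemma \ref{lemma: Va}), kills the remaining columns in total degree $\le D_{\C}(d;m,n)$ by the range estimate of Lemma \ref{lemma: range**}, applies the comparison theorem plus simple connectivity (Lemma \ref{lemma: number}) for part (\I), and extracts the direct summand $G^d_{m,2n+1}$ from the collapse and trivial extensions of Vassiliev's sequence for part (\II). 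This matches the paper's argument, including the deferral of the detailed construction of the resolution map to \cite{Mo2} and \cite{AKY1}.
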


%%(Proof of Theorem 3.13)%%%
\begin{proof}
%%%%%%%%%%%%%%
Consider the spectral sequence (\ref{SSS}).
First, note that, by Lemma \ref{lemma: simp}, there is a filtration
preserving homotopy equivalence 
$^{\p}q^{\Delta}:\tilde{\SZ} (d)\stackrel{\simeq}{\rightarrow}\SZ (d).$
Note also that the image of the  map $j_d^{\p}$ lies 
in a space of polynomial mappings, which approximates  the space of continuous mappings
$S^m\to S^{2n+1}$.
Since $\tilde{\mathcal{Z}}^{\Delta}(d)$ is non-degenerate,
the map$j_d^{\p}$ naturally extends to a filtration
preserving map 
$^{\p}{\tilde{\pi}}: \tilde{\SZ} (d)\to \{\tilde{\frak A}_m^n\}$
between  resolutions.
Thus the filtration preserving maps
$$
\begin{CD}
\SZ (d) @<^{\p}q^{\Delta}<\simeq< \tilde{\SZ} (d)
@>^{\p}\tilde{\pi}>> \{\tilde{\frak A}_m^n\}
\end{CD}
$$
induce a homomorphism of spectral sequences
$\{\tilde{\theta}^t_{r,s}:\E^t_{r,s}(d)\to E^t_{r,s}\},$
where $\{E^t_{r,s},d^t\}\Rightarrow H_{s-r}(\Omega^mS^n,\Z).$
%%%%
\par
%%%
Observe that,
by Lemma \ref{lemma: range*}, 
(ii) of Lemma \ref{lemma: Va} and
the naturality of Thom isomorphism,
for $r\leq \lfloor \frac{d+1}{2}\rfloor$
 there is a
commutative diagram
%%%(11)%%
\begin{equation}\label{Thom2}
%%%%%%%
\begin{CD}
\E^1_{r,s}(d) 
@>T>\cong> H_{s-r(2n-m+2)}(C_r(\R^m),(\pm \Z)^{\otimes (2n-m+1)})
\\
@V\tilde{\theta}^1_{r,s}VV  \Vert @.
\\
E^1_{r,s} 
@>T>\cong> H_{s-r(2n-m+2)}(C_r(\R^m),(\pm \Z)^{\otimes (2n-m+1)} )
\end{CD}
\end{equation}
%%%%%%%%
%for $r\leq \lfloor \frac{d+1}{2}\rfloor$.
Hence, if $r\leq \lfloor \frac{d+1}{2}\rfloor$,
$\tilde{\theta}^1_{r,s}:\E^1_{r,s}(d)\stackrel{\cong}{\rightarrow}
E^1_{r,s}$ and thus so is
$\tilde{\theta}^{\infty}_{r,s}:\E^{\infty}_{r,s}(d)\stackrel{\cong}{\rightarrow}
E^{\infty}_{r,s}$.
%%%%%%%%%%%%%%%%%%%%%
\par
%%%
Next, we will compute the number
$$
D_{min}=\min\{
N\vert \ N\geq s-r,\ 
s\geq (2n+2-m)r,\ 
1\leq r<\lfloor \frac{d+1}{2}\rfloor +1
\}.
$$
%%%%
It is easy to see that  $D_{min}$ is the
largest integer $N$ which satisfies the inequality
%%%%%%
$(n+1-m)r> r+N$
for $r=\lfloor\frac{d+1}{2}\rfloor +1$, hence
%%(12)%%
\begin{equation}\label{Dmin}
%%%%%%
D_{min}=
(2n-m+1)(\lfloor\frac{d+1}{2}\rfloor +1)-1=D_{\C}(d;m,n).
\end{equation}
%%%%%%%%%%%
We note that, for dimensional reasons,
$\tilde{\theta}^{\infty}_{r,s}:
\E^{\infty}_{r,s}(d)\stackrel{\cong}{\rightarrow} 
E^{\infty}_{r,s}$ is always
an isomorphism if
$r\leq \lfloor \frac{d+1}{2}\rfloor$ and $s-r\leq D_{\C}(d;m,n)$.
\par
On the other hand,  from Lemma \ref{lemma: range**}
it easily follows that
 $\tilde{E}^1_{r,s}(d)=E^1_{r,s}=0$ if
$s-r\leq D_{\C}(d;m,n)$ and $r>\lfloor \frac{d+1}{2}\rfloor$.
Hence,
we have:
%%%(12.1)%%%%%
\begin{enumerate}
\item[(\ref{Dmin}.1)]
If  $s\leq r+D_{\C}(d;m,n)$, then
$\tilde{\theta}^{\infty}_{r,s}:
\E^{\infty}_{r,s}(d)\stackrel{\cong}{\rightarrow} 
E^{\infty}_{r,s}$ is
always an isomorphism.
\end{enumerate}
%%%
Hence, by using the comparison Theorem of spectral sequences, 
we have that 
$j_{d}^{\p}$ 
%${i_d^{\p}}_*:H_k(\Alg^*_d(m,n;g),\Z)
%\stackrel{\cong}{\rightarrow} H_k(\Omega^mS^n,\Z)$
is a homology equivalence through dimension $D_{\C}(d;m,n)$.
%%
%\par
%%
Since $A_d^{\C}(m,n;g)$ and $\Omega^mS^{2n+1}$ are simply connected
if $m<2n$,
$j_{d}^{\p}$ is a homotopy equivalence through dimension $D_{\C}(d;m,n)$
if  $m<2n$.
Hence, (i) is proved.
%%%%%%
\par
It remains to show (ii).
Since
$d^t=0$ for any $t\geq 1$, from
the equality
$d^t\circ \tilde{\theta}^t_{r,s}
=\tilde{\theta}^t_{r+t,s+t-1}\circ \tilde{d}^t$
and some diagram chasing,
we obtain $\E^1_{r,s}(d)=\E^{\infty}_{r,s}(d)$ for all $r\leq \lfloor \frac{d+1}{2}\rfloor$.
%%%%
Moreover, since
the extension problem for the graded group
$$
Gr (H_k(\Omega^mS^{2n+1},\Z))=\bigoplus_{r=1}^{\infty}E^{\infty}_{r,k+r}
=\bigoplus_{r=1}^{\infty}E^{1}_{r,k+r}
$$
is trivial,  by using (\ref{Dmin}.1) we  can  prove
that the associated graded group
$$
Gr (H_k(A_d^{\C}(m,n;g),\Z))=\bigoplus_{r=1}^{\infty}\E^{\infty}_{r,k+r}(d)
=\bigoplus_{r=1}^{\infty}\E^{1}_{r,k+r}(d)
$$
is also trivial until the $\lfloor\frac{d+1}{2}\rfloor$-th term of the filtration.
Hence,  $H_k(A_d^{\C}(m,n;g),\Z)$ contains the subgroup
$$
\bigoplus_{r=1}^{\lfloor \frac{d+1}{2}\rfloor}\E^1_{r,k+r}(d)
=
\bigoplus_{r=1}^{\lfloor \frac{d+1}{2}\rfloor}\E^{\infty}_{r,k+r}(d)
\cong
\bigoplus_{r=1}^{\lfloor \frac{d+1}{2}\rfloor}
H_{k-r(2n-m+1)}(C_r(\R^m),(\pm \Z)^{\otimes (2n-m+1)})
$$
as a direct summand, which proves the assertion (ii).
\end{proof}
%%%%(End of Proof of Theorem 3.13)%%

%%%(Corollary 3.14)%%
\begin{cor}\label{cor: I*}
%%%%%
Let $m,n\geq 2$ be positive integers such that $2\leq m\leq 2n$,  
let $g\in \Alg_d^*(\RP^{m-1},\CP^n)$ be an algebraic map of minimal degree $d$,
and let $\Bbb F =\Z/p$ $($p: prime$)$ or $\Bbb F =\Bbb Q$.
%%%
Then the map $j_d^{\p}:A_d^{\C}(m,n;g)\to \Omega^mS^{2n+1}$ induces an isomorphism on
the homology group $H_k(\ ,\Bbb F)$ for any $1\leq k\leq D_{\C}(d;m,n)$.
%%%
\end{cor}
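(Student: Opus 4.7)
The plan is to deduce the corollary directly from Theorem \ref{thm: I*} by applying the universal coefficient theorem.

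First I would extract an integer-coefficient homology isomorphism from Theorem \ref{thm: I*}(i). When $m=2n$ this is immediate, since the theorem asserts exactly that $j_d^{\p}$ is a homology equivalence through dimension $D_{\C}(d;m,n)$. When $m<2n$ the theorem instead gives a homotopy equivalence through dimension $D_{\C}(d;m,n)$; to pass to homology I note that $A_d^{\C}(m,n;g)$ is simply connected by Lemma \ref{lemma: number}(ii), and $\Omega^mS^{2n+1}$ is simply connected because $2n+1-m\geq 2$. A standard relative Hurewicz argument then promotes the homotopy isomorphism in degrees $\leq D_{\C}(d;m,n)$ to an isomorphism on $H_k(\,\cdot\,;\Z)$ in the same range. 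Thus in both cases we have that $(j_d^{\p})_*:H_k(A_d^{\C}(m,n;g);\Z)\to H_k(\Omega^mS^{2n+1};\Z)$ is an isomorphism for all $0\leq k\leq D_{\C}(d;m,n)$.

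Second, I would apply the universal coefficient short exact sequence
$$
0\to H_k(X;\Z)\otimes_{\Z}\Bbb F\to H_k(X;\Bbb F)\to \mathrm{Tor}_1^{\Z}(H_{k-1}(X;\Z),\Bbb F)\to 0
$$
naturally for $X=A_d^{\C}(m,n;g)$ and $X=\Omega^mS^{2n+1}$. The map $j_d^{\p}$ induces a morphism between these exact sequences. For any $k$ with $1\leq k\leq D_{\C}(d;m,n)$, both indices $k$ and $k-1$ lie in the range where the previous step supplies an isomorphism on integer homology; consequently the induced maps on the outer terms $H_k(\,\cdot\,;\Z)\otimes\Bbb F$ and $\mathrm{Tor}_1^{\Z}(H_{k-1}(\,\cdot\,;\Z),\Bbb F)$ are isomorphisms. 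The five-lemma then yields the required isomorphism on $H_k(\,\cdot\,;\Bbb F)$.

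There is no substantive obstacle in this argument: it is a routine transfer from integral to field coefficients via UCT, and the only point worth noting is that one must verify that both indices $k$ and $k-1$ remain in the range $[0,D_{\C}(d;m,n)]$, which is automatic as soon as $k\leq D_{\C}(d;m,n)$. Thus the corollary follows formally from Theorem \ref{thm: I*}.
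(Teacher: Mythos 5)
Your argument is correct, but it is not the route the paper takes. The paper's proof of Corollary \ref{cor: I*} simply says to rerun the entire proof of Theorem \ref{thm: I*} with the coefficient systems $\Z$ and $(\pm\Z)^{\otimes(2n-m+1)}$ replaced by $\Bbb F$ and $(\pm\Bbb F)^{\otimes(2n-m+1)}$, i.e.\ it repeats the comparison of Vassiliev-type spectral sequences with field coefficients. You instead deduce the corollary formally from the already-proved integral statement: in the case $m=2n$ Theorem \ref{thm: I*} directly gives the integral homology isomorphisms through dimension $D_{\C}(d;m,n)$, in the case $m<2n$ you correctly pass from the $\pi_*$-isomorphisms to $H_*(\ ,\Z)$-isomorphisms using simple connectivity (Lemma \ref{lemma: number}(ii) for $A_d^{\C}(m,n;g)$ and $\pi_1(\Omega^mS^{2n+1})=\pi_{m+1}(S^{2n+1})=0$) together with relative Hurewicz, and then the naturality of the universal coefficient sequence plus the five lemma transfers the isomorphism to $\Bbb F$-coefficients, the index shift $k\mapsto k-1$ in the Tor term staying inside the range $[0,D_{\C}(d;m,n)]$ (with $H_0$ handled by connectivity of both spaces). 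This is, if anything, cleaner and more economical than the paper's approach: it uses only the statement of Theorem \ref{thm: I*} rather than its proof, so nothing about the spectral sequence with twisted field coefficients needs to be re-examined. What the paper's approach buys is independence from the integral result (it would work even in situations where one only controls the field-coefficient $E^1$-terms) and it exhibits the field-coefficient $E^1$-page explicitly; but for the statement as given your derivation is complete and sound.
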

%%%%
\begin{proof}
In the  proof of Theorem \ref{thm: I*} replace the homology groups $H_k(\ ,\Z)$ and $H_k(\ ,(\pm \Z)^{\otimes k})$ by $H_k(\ ,\Bbb F)$ 
and $H_k(\ ,(\pm \Bbb F)^{\otimes k})$ and use the same argument.
% as that of Theorem \ref{thm: II**}.
\end{proof}
%%%%%(End of proof of Corollary 3.14)%%%

\par
Let $\gamma_m:S^{m}\to \RP^m$ and $\gamma_n^{\C}:S^{2n+1}\to\CP^n$
denote the usual double covering and the
Hopf fibration map, respectively.  
Let 
$\gamma_m^{\#}:\Map^*_{\epsilon}(\RP^m,\CP^n)\to \Omega^m\CP^n$ be given by
$\gamma_m^{\#}(h)=h\circ \gamma_m$.
It is easy to verify that the following  diagram
%%
%%(13)%%
\begin{equation}\label{gamma}
%%%%
\begin{CD}
A_d^{\C}(m,n;g) @>\Psi_d^{\p}>\cong> \Alg_d^{\C}(m,n;g) @>i_d^{\p}>\subset> F_d(m,n;g)
\\
@V{j_d^{\p}}VV @. @V{i^{\p}}V{\cap}V
\\
\Omega^mS^{2n+1} @>\Omega^m\gamma_n^{\C}>\simeq> 
\Omega^m\CP^n @<\gamma_m^{\#}<< \Map^*_{[d]_2}(\RP^m,\CP^n)
\end{CD}
\end{equation}
%%%%%
is commutative,
where $i^{\p}:F_d(m,n;g)\stackrel{\subset}{\rightarrow} \Map_{[d]_2}^*(\RP^m,\CP^n)$ 
and $\Psi_d^{\p}$ denote the inclusion and the restriction
$\Psi_d^{\p}=\Psi_d^{\C}\vert A_d^{\C}(m,n;g)$,
respectively.
%%%
%%
%%%(Lemma 3.15)%%
\begin{lemma}\label{lemma: II}
%%%
If $2\leq m<2n$  and
$g\in\Alg_d^*(\RP^{m-1},\RP^n)$ a fixed map of minimal degree $d$,
then the map
$\gamma_m^{\#}\circ i^{\p}:F_d(m,n;g)\to\Omega^m\CP^n$
is a homotopy equivalence through dimension $D_{\C}(d;m,n)$.
\end{lemma}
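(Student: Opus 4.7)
The plan is to compare the fibration
\[
F_d(m,n;g)\xrightarrow{i^{\p}}\Map^*_{[d]_2}(\RP^m,\CP^n)\xrightarrow{\rho}\Map^*(\RP^{m-1},\CP^n)
\]
arising from restriction to $\RP^{m-1}$, with the analogous fibration
\[
F'\to\Omega^m\CP^n\xrightarrow{p}\Omega^{m-1}\CP^n
\]
coming from restriction to the equator $S^{m-1}\subset S^m$, linked by the precomposition maps $\gamma_m^{\#}$ and $\gamma_{m-1}^{\#}$; these form a morphism of fibrations because $\gamma_m|_{S^{m-1}}=\gamma_{m-1}$. Here $F'$ denotes the fiber over $g\circ\gamma_{m-1}$, into which $\gamma_m^{\#}\circ i^{\p}$ naturally factors. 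The strategy is to apply the five-lemma to the induced morphism of long exact sequences of homotopy groups.

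First I would reduce the target from $\CP^n$ to $S^{2n+1}$. Since $m\geq 2$ and $H^1(\RP^k;\Z)=0$ for $k\geq 2$, the mapping space $\Map^*(\RP^k,S^1)$ is weakly contractible, and so the Hopf fibration $\gamma_n^{\C}$ gives homotopy equivalences $\Map^*(\RP^k,S^{2n+1})\simeq\Map^*(\RP^k,\CP^n)$ and $\Omega^kS^{2n+1}\simeq\Omega^k\CP^n$ for $k\geq 2$, compatibly with both the restriction maps and with $\gamma_k^{\#}$. It thus suffices to prove the analogous statement for $S^{2n+1}$-valued maps, where the $(2n)$-connectivity of $S^{2n+1}$ provides much cleaner control on homotopy groups.

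Next I would compute the homotopy groups of the four mapping spaces in the reduced diagram using obstruction theory and the Puppe sequence
\[
\cdots\to\Omega\Map^*(\RP^{k-1},S^{2n+1})\to\Omega^kS^{2n+1}\to\Map^*(\RP^k,S^{2n+1})\to\Map^*(\RP^{k-1},S^{2n+1}),
\]
whose connecting map is induced by the suspension of the attaching map $\gamma_{k-1}:S^{k-1}\to\RP^{k-1}$. For $k=m-1,m$ each of these mapping spaces turns out to be $(2n-k)$-connected, and a cell-by-cell analysis using the cellular structure of $\Sigma^j\RP^k$ together with the $(2n)$-connectivity of $S^{2n+1}$ shows that the vertical maps $\gamma_m^{\#}$ and $\gamma_{m-1}^{\#}$ induce isomorphisms on $\pi_j$ for $j\leq D_{\C}(d;m,n)$.

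Finally, the five-lemma applied to the resulting morphism of long exact homotopy sequences of the two fibrations yields that the left-hand vertical map $\gamma_m^{\#}\circ i^{\p}:F_d(m,n;g)\to F'\hookrightarrow\Omega^m\CP^n$ is a $\pi_j$-isomorphism for $j\leq D_{\C}(d;m,n)$, establishing the lemma. The main obstacle lies in the third step: the detailed tracking of the contribution of the double-cover attaching map $\gamma_{k-1}$ to the boundary of the Puppe sequence, particularly at the edge of the vanishing range where $\pi_{2n+1}(S^{2n+1})=\Z$ first appears, requires delicate obstruction-theoretic computation beyond the primary obstruction level.
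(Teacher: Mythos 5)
Your architecture—compare the restriction fibration over $\RP^{m-1}$ with the one over the equator $S^{m-1}$ and apply the five lemma—breaks down at your third step. The precomposition map $\gamma_m^{\#}:\Map^*_{[d]_2}(\RP^m,\CP^n)\to\Omega^m\CP^n$ is \emph{not} an isomorphism on $\pi_j$ for all $j\leq D_{\C}(d;m,n)$, and no cell-by-cell obstruction analysis can make it one, because the two spaces have genuinely different homotopy groups in that range. Concretely, for $m$ even the paper's own computations (Lemma \ref{lemma: simply} combined with Lemma \ref{lemma: Hopf}) give $\pi_{2n-m+1}(\Map^*_{[d]_2}(\RP^m,\CP^n))\cong\Z/2$, whereas $\pi_{2n-m+1}(\Omega^m\CP^n)\cong\pi_{2n+1}(S^{2n+1})\cong\Z$, and $2n-m+1\leq D_{\C}(d;m,n)$ always holds. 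Relatedly, the fiber $F'$ of $\Map^*(S^m,\CP^n)\to\Map^*(S^{m-1},\CP^n)$ is equivalent to $\Map^*(S^m\vee S^m,\CP^n)\simeq\Omega^m\CP^n\times\Omega^m\CP^n$, not to $\Omega^m\CP^n$, so even a valid five-lemma conclusion would identify $F_d(m,n;g)$ with the wrong space, contradicting Sasao's equivalence $F_d(m,n;g)\simeq\Omega^m\CP^n$. The approach cannot be repaired by sharpening the obstruction theory; the comparison of ambient mapping spaces is the wrong mechanism.

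A further sign of trouble is that your argument never uses the algebraic input, yet the lemma is proved in the paper precisely as a corollary of it. The paper's proof runs as follows: Sasao's abstract equivalence $F_d(m,n;g)\simeq\Omega^m\CP^n$ shows both spaces are simple and have equal, finite $\Bbb F$-Betti numbers in every degree ($\Bbb F=\Z/p$ or $\Q$); the commutative diagram (\ref{gamma}) together with Corollary \ref{cor: I*} (the field-coefficient version of the Vassiliev spectral-sequence comparison, asserting that $j_d^{\p}:A_d^{\C}(m,n;g)\to\Omega^mS^{2n+1}$ is an $\Bbb F$-homology isomorphism through dimension $D_{\C}(d;m,n)$) shows that $(\gamma_m^{\#}\circ i^{\p})_*$ is surjective on $H_k(\ ;\Bbb F)$ in that range; a surjection between finite-dimensional vector spaces of equal dimension is a bijection; the Universal Coefficient Theorem upgrades this to integral homology; and simplicity converts the homology statement into a homotopy equivalence through dimension $D_{\C}(d;m,n)$. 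Any correct proof must route through $j_d^{\p}$ and the diagram (\ref{gamma}) in this way rather than through a fibrationwise comparison of $\Map^*(\RP^m,\CP^n)$ with $\Omega^m\CP^n$.
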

%%%%%%
\begin{proof}
%%%%
Since there is a homotopy equivalence $F_d(m,n;g)\simeq \Omega^m\CP^n$,
the two spaces $F_d(m,n;g)$ and $\Omega^m\CP^n$ are simple.
So it suffices to show that the map $\gamma_m^{\#}\circ i^{\p}$ is a homology equivalence
through dimension $D_{\C}(d;m,n)$.
\par
Let $\Bbb F =\Z/p$ $(p$: prime) or $\Bbb F =\Bbb Q$, and consider the induced homomorphism
$(\gamma_m^{\#}\circ i^{\p})_*=H_k(\gamma_m^{\#}\circ i^{\p},\Bbb F)
:H_k(F_d(m,n;g),\Bbb F)\to H_k(\Omega^m\CP^n,\Bbb F)$.
%%%
Since $\Omega^m\gamma_n^{\C}$ is a homotopy equivalence by
 Corollary \ref{cor: I*} and the commutativity of the diagram (\ref{gamma})
$(\gamma_m^{\#}\circ i^{\p})_*$ is an epimorphism for any $1\leq k\leq D_{\C}(d;m,n)$.
\par
However, since there is a homotopy equivalence
$F_d(m,n;g)\simeq \Omega^m\CP^n$, we have
$\dim_{\Bbb F}H_k(F_d(m,n;g),\Bbb F)=\dim_{\Bbb F}H_k(\Omega^m\CP^n,\Bbb F)<\infty$
for any $k$.
Hence,
$H_k(\gamma_m^{\#}\circ i^{\p},\Bbb F )$ is an isomorphism for any $1\leq k\leq D_{\C}(d;m,n)$.
By the Universal Coefficient Theorem $\gamma_m^{\#}\circ i^{\p}$ induces an isomorphism on $H_k(\ ,\Z)$ for any $1\leq k\leq D_{\C}(d;m,n)$.
\end{proof}
%%(End of Proof of Lemma 3.15)%%%%

%%(Proof of Theorem 1.4)%%%
\begin{proof}[Proof of Theorem \ref{thm: I}]
%%%%%
Since $\Psi_d^{\p}:A_d^{\C}(m,n;g)\stackrel{\cong}{\rightarrow}\Alg_d^{\C}(m,n;g)$ is a
homeomorphism, the assertion (ii) follows from Theorem \ref{thm: I*}.
%%%
Because $\gamma_m^{\#}\circ i^{\p}$ a homotopy equivalence through dimension $D(d;m,n)$,
by using the  diagram (\ref{gamma}) and Theorem \ref{thm: I*}
 we easily obtain the assertion (i).
%%%
\end{proof}
%%%(End of Proof of Theorem 1.6)%%%

%%%(SECTION 4)%%%
\section{The space $A_d^{\C}(m,n)$.}\label{section 3.3}
In this section we shall consider the unstable problem for the space
$A_d^{\C}(m,n)$, where $d = 2 d^*\geq 2$ is even.

%\par\vspace{2mm}
%%%
%%%(Definition 4.1)%%%%
\begin{defi}
%\par
%{\bf Definition. }
%%%%%%%%%%%%%%%%%%%%%%%%%%%
{\rm
Define
$\psi_{m,n}:A_d^{\R}(m,2n+1)\to A_d^{\C}(m,n)$ by
$$
\psi_{m,n}(f_0,\cdots ,f_{2n+1})=
(f_0+\sqrt{-1}f_1,f_2+\sqrt{-1}f_3,\cdots ,
f_{2n}+\sqrt{-1}f_{2n+1}).
$$
}
\end{defi}
%%%
It is easy to see that
%%%
%%(Lemma 4.2)%%%
\begin{lemma}\label{lemma: varphi}
%%%%
$\psi_{m,n}:A_d^{\R}(m,2n+1)
\stackrel{\cong}{\rightarrow}
A_d^{\C}(m,n)$ is a homeomorphism.
\qed
\end{lemma}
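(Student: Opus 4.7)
The plan is to exhibit an explicit continuous two-sided inverse to $\psi_{m,n}$ and then check that both maps respect the two defining conditions of $A_d^{\R}(m,2n+1)$ and $A_d^{\C}(m,n)$. I would define
$$
\psi_{m,n}^{-1}(g_0,\dots,g_n)=(g_0^R,g_0^I,g_1^R,g_1^I,\dots,g_n^R,g_n^I),
$$
where $g_k=g_k^R+\sqrt{-1}\,g_k^I$ is the decomposition into real and imaginary parts. Both $\psi_{m,n}$ and this candidate inverse are $\R$-linear maps between the ambient affine coefficient spaces and are visibly mutually inverse at the level of polynomials, so continuity in both directions is automatic from the formulas.

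What remains is to check that each map actually lands in the claimed codomain. For the $z_0^d$-coefficient normalization: the $z_0^d$-coefficient of $f_0+\sqrt{-1}\,f_1$ is $1+\sqrt{-1}\cdot 0=1\in\C$, and that of each $f_{2k}+\sqrt{-1}\,f_{2k+1}$ with $k\geq 1$ is $0$, so $\psi_{m,n}$ respects the $A_d^{\C}(m,n)$ convention. Conversely, decomposing the complex coefficients $1\in\C$ and $0\in\C$ into real and imaginary parts recovers the $A_d^{\R}(m,2n+1)$ convention.

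For the no-non-trivial-common-real-root condition, the key observation is that $f_j(\mathbf{x})\in\R$ whenever $\mathbf{x}\in\R^{m+1}$, so
$$
(f_{2k}+\sqrt{-1}\,f_{2k+1})(\mathbf{x})=0 \iff f_{2k}(\mathbf{x})=0 \text{ and } f_{2k+1}(\mathbf{x})=0,
$$
which makes $\mathbf{x}\in\R^{m+1}$ a common real root of $(g_0,\dots,g_n)$ if and only if it is a common real root of $(f_0,\dots,f_{2n+1})$. There is no real obstacle here: the lemma is bookkeeping for the identification $\C^{n+1}\cong\R^{2n+2}$ given by $a+\sqrt{-1}\,b\leftrightarrow(a,b)$, restricted to tuples avoiding non-trivial common real roots and satisfying the leading-coefficient condition. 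The only subtlety worth flagging is that the equivalence of root conditions genuinely relies on restricting to real arguments; the analogous statement for common complex roots would be false, which is precisely why both spaces are defined using only \emph{real} zero-sets.
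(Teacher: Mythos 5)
Your proof is correct, and it is exactly the routine verification the paper omits: the lemma is prefaced by ``It is easy to see that'' and stated with no written proof, the intended argument being precisely the identification $\C^{n+1}\cong\R^{2n+2}$ you spell out. Your checks of the $z_0^d$-coefficient normalization and of the equivalence of the \emph{real} common-root conditions (together with $\R$-linearity giving continuity both ways) are complete and match the paper's intent.
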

%%%

%%%(Lemma 4.3)%%%
\begin{lemma}\label{lemma: simply}
%%%%
$\I$
$\Map^*(\RP^m,S^{2n+1})$ is $(2n-m)$-connected.
\par
%%(ii)%%
$\II$
$\pi_{2n-m+1}(\Map^*(\RP^m,S^{2n+1}))\cong
\begin{cases}
\Z & \mbox{ if }\ m\equiv 1\ \mo ,
\\
\Z/2 & \mbox{ if }\ m\equiv 0\  \mo .
\end{cases}
$
\end{lemma}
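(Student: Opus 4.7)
The plan is to use the adjunction
$$
\pi_j(\Map^*(\RP^m,S^{2n+1}))\cong [\RP^m,\Omega^jS^{2n+1}]^*
$$
together with the fact that $\Omega^jS^{2n+1}$ is $(2n-j)$-connected. Since $\RP^m$ is an $m$-dimensional CW complex, both parts of the lemma then reduce to elementary obstruction theory on the domain $\RP^m$, and I will treat $\I$ and $\II$ in turn.

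For $\I$, if $j\leq 2n-m$ the target $\Omega^jS^{2n+1}$ is at least $m$-connected, so any based map from the $m$-dimensional complex $\RP^m$ into it is null-homotopic by cellular approximation. This immediately gives $[\RP^m,\Omega^jS^{2n+1}]^*=0$, hence the claimed $(2n-m)$-connectivity of the mapping space.

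For $\II$, take $j=2n-m+1$ and set $Y:=\Omega^{2n-m+1}S^{2n+1}$. Then $Y$ is exactly $(m-1)$-connected with $\pi_m(Y)\cong \pi_{2n+1}(S^{2n+1})\cong \Z$. The first Postnikov section $Y\to K(\Z,m)$ is a $\pi_{\leq m}$-isomorphism whose homotopy fiber is $m$-connected, so since $\dim\RP^m=m$ standard obstruction theory yields a bijection
$$
[\RP^m,Y]^*\stackrel{\cong}{\longrightarrow}[\RP^m,K(\Z,m)]^*=H^m(\RP^m;\Z).
$$
The right-hand side equals $\Z$ when $m$ is odd and $\Z/2$ when $m$ is even by the classical computation of the integral cohomology of $\RP^m$, which is exactly the answer stated in $\II$.

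The only point requiring real care is the borderline case in $\II$, where $\dim\RP^m$ equals $\mathrm{conn}(Y)+1$; one has to verify that Postnikov truncation induces not merely a surjection but a bijection of pointed homotopy classes. This is automatic because $\RP^m$ has no cells in dimensions $>m$, so the successive obstructions to lifting a map and to uniqueness of lifts, which lie in cohomology groups of $\RP^m$ in degrees $\geq m+1$ with coefficients in the homotopy groups of the fiber, all vanish for dimensional reasons. Aside from this piece of bookkeeping the argument is entirely routine.
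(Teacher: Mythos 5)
Your argument is correct, but it takes a genuinely different route from the paper. You work entirely on the domain side: the adjunction $\pi_j(\Map^*(\RP^m,S^{2n+1}))\cong[\RP^m,\Omega^jS^{2n+1}]^*$ reduces $\I$ to the connectivity of $\Omega^jS^{2n+1}$ versus $\dim\RP^m$, and reduces $\II$ to the Postnikov truncation $\Omega^{2n-m+1}S^{2n+1}\to K(\Z,m)$, so that the answer is literally $H^m(\RP^m;\Z)$. The paper instead argues by induction on $m$ using the restriction fibrations $\Omega^mS^{2n+1}\to\Map^*(\RP^m,S^{2n+1})\to\Map^*(\RP^{m-1},S^{2n+1})$ for $\I$, and for $\II$ the fibration with fibre $\Map^*(\RP^m/\RP^{m-2},S^{2n+1})$ together with the homotopy type of the stunted projective space ($S^{m-1}\vee S^m$ for $m$ odd, $S^{m-1}\cup_2e^m$ for $m$ even). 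Your approach is shorter, avoids the induction, and makes the dichotomy in $\II$ conceptually transparent as the parity dependence of $H^m(\RP^m;\Z)$; the paper's approach has the advantage of staying within the fibration-comparison machinery used repeatedly elsewhere in Section 4 (e.g.\ in the proof of Lemma 4.5). The only point you should make explicit is that the bijection $[\RP^m,\Omega^{2n-m+1}S^{2n+1}]^*\to H^m(\RP^m;\Z)$ is a group isomorphism and not merely a bijection of sets; this holds because the Postnikov section of a loop space may be taken to be a loop map (so the induced map of homotopy sets is a homomorphism for the loop-sum group structures), and with that remark your proof is complete.
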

%%%%
\begin{proof}
%%%
(i)
We argue by induction on $m$.
For $m=1$ the result follows from the homotopy equivalence
$\Map^*(\RP^1,S^{2n+1})\simeq
\Omega S^{2n+1}$ .
Suppose that the space
$\Map^*(\RP^{m-1},S^{2n+1})$
is $(2n-m+1)$-connected for some $m\geq 2$.
Since $\Omega^mS^{2n+1}$ is $(2n-m)$-connected,
from the restriction fibration sequence
$\Omega^mS^{2n+1} \to \Map^*(\RP^m,S^{2n+1}) \stackrel{}{\rightarrow}
\Map^*(\RP^{m-1},S^{2n+1})$,
we deduce that $\Map^*(\RP^m,S^{2n+1})$ is $(2n-m)$-connected.
Hence, (i) has been proved.
\par
%%(ii)%%
(ii) First, consider the case $m=1$. Since $\Map^*(\RP^1,S^{2n+1})\simeq
\Omega S^{2n+1}$, (ii) clearly holds for $m=1$.
Next, consider the case $m=2$.
If we consider the fibration sequence
$\Map^*(\RP^2,S^{2n+1})\to \Omega S^{2n+1}\stackrel{2}{\rightarrow}\Omega S^{2n+1}$
induced from
the cofibration sequence $S^1\stackrel{2\iota_1}{\rightarrow}S^1 \to \RP^2$,
an easy computation shows that $\pi_{2n-1}(\Map^*(\RP^2,S^{2n+1}))\cong \Z/2.$
Hence, (ii) holds for $m=2$, too.
\par
Now we assume that $m\geq 3$ and consider the fibration sequence
$$
\Map^*(\RP^m/\RP^{m-2},S^{2n+1})\to
\Map^*(\RP^m,S^{2n+1})\stackrel{res}{\longrightarrow}
\Map^*(\RP^{m-2},S^{2n+1})
$$
%induced from the cofibration sequence
%$\RP^{m-2}\stackrel{\subset}{\rightarrow} \RP^m \to \RP^m/\RP^{m-2}=\P^m_{m-1}$.
Since $\Map^*(\RP^{m-2},S^{2n+1})$ is $(2n-m+2)$-connected, 
there is an isomorphism
$\pi_{2n-m+1}(\Map^*(\RP^m,S^{2n+1}))\cong
\pi_{2n-m+1}(\Map^*(\RP^m/\RP^{m-2},S^{2n+1}))$.
Thus it remains to show the following:
%%(15)%%
\begin{equation}\label{picomp}
\pi_{2n-m+1}(\Map^*(\RP^m/\RP^{m-2},S^{2n+1}))
\cong
\begin{cases}
\Z & \mbox{ if }\ m\equiv 1\ \mo ,
\\
\Z/2 & \mbox{ if }\ m\equiv 0\  \mo .
\end{cases}
\end{equation}
%%%
If $m\equiv 1$ $\mo$, $\RP^m/\RP^{m-2}=S^{m-1}\vee S^m$ and
there is an isomorphism
%a homotopy equivalence
%$\Map^*(\RP^m/\RP^{m-2},S^{2n+1})\simeq
%\Omega^{m-1}S^{2n+1}\times\Omega^{m}S^{2n+1}$.
%Hence,
$$
\pi_{2n-m+1}(\Map^*(\RP^m/\RP^{m-2},S^{2n+1}))\cong
\pi_{2n-m+1}(\Omega^{m-1}S^{2n+1}\times
\Omega^mS^{2n+1})\cong \Z .
$$
Hence (\ref{picomp}) holds for $m\equiv 1$ $\mo$.
%%%
Finally suppose that $m\equiv 0$ $\mo$.
Then, because
 $\RP^m/\RP^{m-2}=S^{m-1}\cup_2e^m$,
there is a fibration sequence
$$
\Map^*(\RP^m/\RP^{m-2},S^{2n+1})\to \Omega^{m-1}S^{2n+1}
\stackrel{2}{\rightarrow}
\Omega^{m-1}S^{2n+1}.
$$
From  the homotopy exact sequence induced by the above sequence,
we deduce that 
$\pi_{2n-m+1}(\Map^*(\RP^m/\RP^{m-2},S^{2n+1}))\cong \Z/2$.
\end{proof}
%%(End of proof of Lemma 4.3)%%

%(Definition 4.4)%%%%%%%%%
%\par
%{\bf Definition. }
\begin{defi}
%%%%%%%%%%%%%%%
{\rm
(i)
Let $\gamma_n:S^n\to \RP^n$ and
$\gamma_n^{\C}:S^{2n+1}\to \CP^n$ denote the usual double covering
and the Hopf fibering as before.
Define the following two maps
$$
\begin{cases}
{\gamma_n}_{\#}:\Map^*(\RP^m,S^n)\to\Map^*(\RP^m,\RP^n)
\\
{\gamma_n^{\C}}_{\#}:\Map^*(\RP^m,S^{2n+1})\to
\Map^*(\RP^m,\CP^n)
\end{cases}
$$
by ${\gamma_n}_{\#}(h)=\gamma_n\circ h$ and
${\gamma_n^{\C}}_{\#}(h^{\p})=\gamma_n^{\C}\circ h^{\p}$.
\par
Since $\Map^*(\RP^m,S^n)$ and $\Map^*(\RP^m,S^{2n+1})$) contain the subspace of constant maps,
the images of ${\gamma_n}_{\#}$ and that of ${\gamma_n^{\C}}_{\#}$ are contained in 
$\Map_0^*(\RP^m,\RP^n)$ and
$\Map^*_0(\RP^m,\CP^n)$, respectively.
Thus we obtain two maps
%%()%%
$$
\begin{cases}
{\gamma_n}_{\#}:\Map^*(\RP^m,S^n)\to \Map^*_0(\RP^m,\RP^n),
\\
{\gamma_n^{\C}}_{\#}:\Map^*(\RP^m,S^{2n+1}) \to \Map^*_0(\RP^m,\CP^n).
\end{cases}
$$
\par
(ii) Let $\mu_n:\RP^{2n+1}\to \CP^n$ denote the usual projection given by
$$
\mu_n([x_0:x_1:\cdots :x_{2n+1}]=[x_0+\sqrt{-1}x_1:
\cdots :x_{2n}+\sqrt{-1}x_{2n+1}],
$$
and define the map
${\mu_n}_{\#}:\Map_0^*(\RP^m,\RP^{2n+1})\to\Map_0^*(\RP^m,\CP^n)$ by
${\mu_n}_{\#}(h)=\mu_n\circ h$.
%%%
}
\end{defi}

%%%(Lemma 4.5)%%
\begin{lemma}\label{lemma: Hopf}
%%%
$\I$
If $1\leq m<n$,
${\gamma_n}_{\#}:\Map^*(\RP^m,S^n)\stackrel{\simeq}{\rightarrow} \Map^*_0(\RP^m,\RP^n)$
is a homotopy equivalence.
\par
$\II$
If $2\leq m\leq 2n$, 
${\gamma_n^{\C}}_{\#}:\Map^*(\RP^m,S^{2n+1})
\stackrel{\simeq}{\rightarrow} 
\Map^*_0(\RP^m,\CP^n)$
is a homotopy equivalence.
\end{lemma}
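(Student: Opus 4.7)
My plan is to deduce both parts from the long exact sequences on homotopy groups attached to the natural double cover $\Z/2 \hookrightarrow S^n \to \RP^n$ and the Hopf fibration $S^1 \hookrightarrow S^{2n+1} \to \CP^n$. Applying the functor $\Map^*(\RP^m,-)$ to a fibration $F \to E \to B$ yields a fibration sequence
\[
\Map^*(\RP^m,F) \to \Map^*(\RP^m,E) \to \Map^*(\RP^m,B)
\]
in which the displayed fiber sits over the constant map at the basepoint of $B$. In each part I will verify (a) that this fiber is weakly contractible, and (b) that the image of the middle map lies in the component $\Map^*_0$ of the base. Since the relevant mapping spaces have the homotopy type of CW complexes (Milnor), a weak equivalence will then give the asserted homotopy equivalence.

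For (i), the fiber $\Map^*(\RP^m,\Z/2)$ is a single point, because $\Z/2$ is discrete and $\RP^m$ is connected, so $\gamma_{n\#}$ induces isomorphisms on all homotopy groups of the components hit. Standard obstruction theory, using that $S^n$ is $(n-1)$-connected and $\dim \RP^m = m < n$, shows that every based map $\RP^m \to S^n$ is null-homotopic, whence $\Map^*(\RP^m,S^n)$ is path-connected and its image lies in $\Map^*_0(\RP^m,\RP^n)$.

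For (ii), the fiber is $\Map^*(\RP^m,S^1)$. The key step is to prove that this space is weakly contractible for $m \geq 2$. Writing $S^1 = K(\Z,1)$ and using the standard identification
\[
\pi_k(\Map^*(X,K(G,n))) \cong \tilde{H}^{n-k}(X;G),
\]
the claim reduces to the vanishing of $\tilde{H}^{1-k}(\RP^m;\Z)$ for all $k \geq 0$, which follows from $H^1(\RP^m;\Z) = 0$ when $m \geq 2$ together with $\tilde{H}^0(\RP^m;\Z) = 0$. This is precisely why the hypothesis $m \geq 2$ is needed: for $m = 1$, $\Map^*(S^1,S^1) \simeq \Z$ is certainly not contractible. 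Once this is established, obstruction theory (using $2n$-connectivity of $S^{2n+1}$ and $m \leq 2n$) shows $\Map^*(\RP^m,S^{2n+1})$ is path-connected, so ${\gamma_n^{\C}}_{\#}$ lands in $\Map^*_0(\RP^m,\CP^n)$ and is a weak equivalence onto it by the long exact sequence of the fibration.

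The main technical hurdle is the vanishing of $\Map^*(\RP^m,S^1)$ in (ii); once that is in hand, everything else is routine fibration-theoretic bookkeeping.
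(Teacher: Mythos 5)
Your proof is correct, but it takes a genuinely different route from the paper. The paper disposes of (i) by citing [AKY1, Lemma 4.17] and proves (ii) by induction on $m$ over the restriction fibrations $\Omega^m S^{2n+1}\to\Map^*(\RP^m,S^{2n+1})\to\Map^*(\RP^{m-1},S^{2n+1})$ and their $\CP^n$-analogues, comparing the two via the maps induced by $\gamma_n^{\C}$ and the Five Lemma, with the base case $m=2$ requiring a separate check that $\pi_1(\Map^*_0(\RP^2,\CP^n))=0$. You instead fiber over the target: applying $\Map^*(\RP^m,-)$ to $\Z/2\to S^n\to\RP^n$ and to the Hopf fibration $S^1\to S^{2n+1}\to\CP^n$, and reducing everything to the weak contractibility of the fiber, which for (ii) is the $K(\Z,1)$ computation $\pi_k(\Map^*(\RP^m,S^1))\cong\tilde H^{1-k}(\RP^m;\Z)=0$ for $m\ge 2$. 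This treats (i) and (ii) uniformly, avoids the induction and the low-degree $\pi_1$ bookkeeping, and makes transparent exactly where $m\ge 2$ (and $m\le 2n$, for connectivity of $\Map^*(\RP^m,S^{2n+1})$) enter; the paper's route stays within the restriction-fibration machinery it uses throughout and only needs that $\Omega^m\gamma_n^{\C}$ is an equivalence, which is ultimately the same Hopf-fibration fact in disguise. One point you should make explicit: you show the image of ${\gamma_n^{\C}}_{\#}$ (resp.\ ${\gamma_n}_{\#}$) is \emph{contained} in $\Map^*_0$, but the claimed equivalence is \emph{onto} $\Map^*_0$; surjectivity follows either because the image of a Hurewicz fibration is a union of path components of the base (lift paths starting from a constant map), or because a map in $\Map^*_0$ pulls back the Hopf bundle (resp.\ double cover) trivially and hence admits a based lift. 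With that sentence added, the argument is complete.
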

%%%%%%%%%%%%%%%%%%
\par
{\it Remark. }
%%%%%%%%
Since $\pi_1(\Map^*(\RP^1,S^{2n+1}))=0$ and
$\pi_1(\Map^*_0(\RP^1,\CP^n))=\Z$, (ii) of Theorem \ref{lemma: Hopf}
does not hold for $m=1$.
%%%%%(Proof of Lemma 4.5)%%%
\begin{proof}
%%%
Since (i) follows from [\cite{AKY1}, Lemma 4.17],
it remains to prove (ii).
We prove it by induction on $m$.
First, assume that $m=2$, and consider the following commutative diagram
of restriction fibration sequences
%%(16)%%
\begin{equation}\label{****}
\begin{CD}
\Omega^2S^{2n+1} @>>> \Map^*(\RP^2,S^{2n+1}) @>r>> \Omega S^{2n+1}
\\
@V{\Omega^2\gamma_n^{\C}}V{\simeq}V @V{\gamma_n}_{\#}VV @V{\Omega \gamma_n^{\C}}VV
\\
\Omega^2\CP^n @>>> \Map^*_0(\RP^2,\CP^n) @>r>> \Omega \CP^n
\end{CD}
\end{equation}
%%%
Since $\Omega^2\gamma_n^{\C}$ is a homotopy equivalence
and
${\Omega \gamma_n^{\C}}_*:\pi_k(\Omega S^{2n+1})
\stackrel{\cong}{\rightarrow}\pi_k(\Omega \CP^n)$
is an isomorphism for any $k\geq 2$,
by the Five Lemma 
the induced homomorphism
${{\gamma_n^{\C}}_{\#}}_*:\pi_k(\Map^*(\RP^2,S^{2n+1}))
\stackrel{\cong}{\rightarrow} 
\pi_k(\Map^*_0(\RP^2,\CP^n))$ is an isomorphism for any $k\geq 2$.
On the other hand, because
the homotopy exact sequence of the lower row of (\ref{****})
is
$$
0\to \pi_1(\Map^*_0(\RP^2,\CP^n))\to
\Z =\pi_1(\Omega \CP^n) \stackrel{\partial}{\rightarrow}
\pi_0(\Omega^2\CP^n)=\Z \to 0,
$$
we see that
$\pi_1(\Map^*_0(\RP^2,\CP^n))=0$.
Thus, since  $\Map^*(\RP^2,S^{2n+1})$ is $(2n-2)$-connected
(by Lemma \ref{lemma: simply}),
the induced homomorphism
${\Omega \gamma_n^{\C}}_*:\pi_1(\Omega S^{2n+1})
\stackrel{\cong}{\rightarrow}\pi_1(\Omega \CP^n)$
is  an isomorphism.
Hence, $\pi_k({\gamma_n^{\C}}_{\#})$ is an isomorphism for any $k\geq 1$ and 
the assertion is true for $m=2$.
\par
Now suppose that  
${\gamma_n}_{\#}^{\p}:
\Map^*(\RP^{m-1},S^{2n+1})\stackrel{\simeq}{\rightarrow}
\Map^*_0(\RP^{m-1},\CP^n)$
is a  homotopy equivalence for some $m\geq 3$, and
consider the following commutative diagram
of restriction fibration sequences
%%(15)%%
\begin{equation*}\label{***}
%%%
\begin{CD}
\Omega^mS^{2n+1} @>>> \Map^*(\RP^m,S^{2n+1}) @>r>> \Map^*(\RP^{m-1},S^{2n+1})
\\
@V{\Omega^m\gamma_n^{\C}}V{\simeq}V @V{\gamma_n}_{\#}VV @V{\gamma_n}_{\#}^{\p}V{\simeq}V
\\
\Omega^m\CP^n @>>> \Map^*_0(\RP^m,\CP^n) @>r>> \Map^*_0(\RP^{m-1},\CP^n)
\end{CD}
\end{equation*}
%%%%%%
Since $\Omega^m\gamma_n^{\C}$ and ${\gamma^{\p}_n}_{\#}$ are homotopy equivalences,
by  the Five Lemma,
we see that
${\gamma_n}_{\#}:\Map^*(\RP^{m},S^{2n+1})\stackrel{\simeq}{\rightarrow}
\Map^*_0(\RP^{m},\CP^n)$
is also a homotopy equivalence.
%%%%%%%%%%%%%%%%%%
\end{proof}
%%%(End of proof of Lemma 4.5)%%

%%%(Corollary 4.6)%%%%%
\begin{cor}\label{cor: M0-homotopy}
%%%%
%%
If $2\leq m\leq 2n$,
$\Map^*_0(\RP^m,\CP^n)$ is $(2n-m)$-connected and
$$
\pi_{2n-m+1}(\Map^*_0(\RP^m,\CP^n))\cong
\begin{cases}
\Z & \mbox{ if }\ m\equiv 1\ \mo ,
\\
\Z/2 & \mbox{ if }\ m\equiv 0\  \mo .
\end{cases}
$$
\end{cor}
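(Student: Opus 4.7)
The corollary is an immediate consequence of combining the two preceding lemmas, so the plan is short. The strategy is to transport the connectivity and low-dimensional homotopy group computation from $\Map^*(\RP^m,S^{2n+1})$ to $\Map^*_0(\RP^m,\CP^n)$ via the Hopf fibration.

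Concretely, I would first invoke Lemma~\ref{lemma: Hopf}(ii), which applies precisely in the range $2\le m\le 2n$ and yields a homotopy equivalence
\[
{\gamma_n^{\C}}_{\#}\colon \Map^*(\RP^m,S^{2n+1}) \xrightarrow{\simeq} \Map^*_0(\RP^m,\CP^n).
\]
This identifies all homotopy groups of the two spaces. Then I would apply Lemma~\ref{lemma: simply}(i), which says $\Map^*(\RP^m,S^{2n+1})$ is $(2n-m)$-connected, to conclude that $\Map^*_0(\RP^m,\CP^n)$ is $(2n-m)$-connected as well. Finally, Lemma~\ref{lemma: simply}(ii) gives the identification of $\pi_{2n-m+1}$, namely $\Z$ if $m$ is odd and $\Z/2$ if $m$ is even, which transports along the homotopy equivalence to the required computation for $\Map^*_0(\RP^m,\CP^n)$.

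There is essentially no obstacle at this stage; all the real work is already packaged in the two lemmas. The only thing to be slightly careful about is the hypothesis $m\ge 2$ in Lemma~\ref{lemma: Hopf}(ii) (which is consistent with the remark noting that the case $m=1$ fails because $\pi_1(\Map^*_0(\RP^1,\CP^n))=\Z$ while $\pi_1(\Map^*(\RP^1,S^{2n+1}))=0$), and this is exactly why the corollary also assumes $m\ge 2$.
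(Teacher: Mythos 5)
Your proposal is correct and is exactly the paper's argument: the authors likewise deduce the corollary by combining Lemma \ref{lemma: simply} with the homotopy equivalence ${\gamma_n^{\C}}_{\#}$ of Lemma \ref{lemma: Hopf}(ii). Nothing is missing.
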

%%%%%%%%%%%%%%%%%%
\begin{proof}
This follows from Lemma \ref{lemma: simply} and
Lemma \ref{lemma: Hopf}.
\end{proof}
%%%%(End of proof of Corollary 4.6)%%

%%(Corollary 4.7)%%
\begin{cor}\label{cor: mu}
%%%%%%
If $2\leq m\leq 2n$, the map
${\mu_n}_{\#}:\Map_0^*(\RP^m,\RP^{2n+1})\stackrel{\simeq}{\rightarrow}
\Map_0^*(\RP^m,\CP^n)$
is a homotopy equivalence.
\end{cor}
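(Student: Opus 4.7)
The plan is to factor $(\mu_n)_\#$ through a map to which Lemma \ref{lemma: Hopf} applies, and then use a two-out-of-three argument for homotopy equivalences.

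First I would verify the key compatibility at the level of target spaces: if $\gamma_{2n+1}:S^{2n+1}\to\RP^{2n+1}$ is the double covering, then directly from the formulas
\[
\mu_n\circ\gamma_{2n+1}(x_0,\ldots,x_{2n+1})=[x_0+\sqrt{-1}x_1:\cdots:x_{2n}+\sqrt{-1}x_{2n+1}]=\gamma_n^{\C}(x_0,\ldots,x_{2n+1}).
\]
Post-composing with maps out of $\RP^m$ then gives a commutative triangle
\[
\begin{CD}
\Map^*(\RP^m,S^{2n+1}) @>(\gamma_{2n+1})_{\#}>> \Map_0^*(\RP^m,\RP^{2n+1}) \\
@V(\gamma_n^{\C})_{\#}VV @VV(\mu_n)_{\#}V \\
\Map_0^*(\RP^m,\CP^n) @= \Map_0^*(\RP^m,\CP^n)
\end{CD}
\]
where the left-hand vertical is really the composition of the top map with the right-hand vertical; i.e.\ $(\mu_n)_{\#}\circ(\gamma_{2n+1})_{\#}=(\gamma_n^{\C})_{\#}$.

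Next I would apply Lemma \ref{lemma: Hopf}(i) to the covering $\gamma_{2n+1}:S^{2n+1}\to\RP^{2n+1}$: since the hypothesis there requires the domain dimension to be strictly less than the target sphere dimension, and $m\le 2n<2n+1$, the horizontal map $(\gamma_{2n+1})_{\#}$ is a homotopy equivalence. Similarly, Lemma \ref{lemma: Hopf}(ii) (whose hypothesis $2\le m\le 2n$ is exactly our assumption) guarantees that the diagonal map $(\gamma_n^{\C})_{\#}$ is a homotopy equivalence.

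Finally, I would conclude by the standard two-out-of-three property: choosing a homotopy inverse $s$ for $(\gamma_{2n+1})_{\#}$ yields
\[
(\mu_n)_{\#}\simeq (\mu_n)_{\#}\circ(\gamma_{2n+1})_{\#}\circ s=(\gamma_n^{\C})_{\#}\circ s,
\]
which is a composition of two homotopy equivalences and is therefore itself a homotopy equivalence. There is no real obstacle here—the entire content of the corollary has been absorbed into the preceding Lemma \ref{lemma: Hopf}, and the only genuinely new ingredient is the elementary identity $\mu_n\circ\gamma_{2n+1}=\gamma_n^{\C}$.
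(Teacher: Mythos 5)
Your proposal is correct and follows essentially the same route as the paper: the authors also observe the identity $\mu_n\circ\gamma_{2n+1}=\gamma_n^{\C}$, form the resulting commutative diagram relating $(\gamma_{2n+1})_{\#}$, $(\gamma_n^{\C})_{\#}$ and $(\mu_n)_{\#}$, and conclude by two-out-of-three using both parts of Lemma \ref{lemma: Hopf}. Your verification that the hypotheses of Lemma \ref{lemma: Hopf}(i) are met because $m\le 2n<2n+1$ is a detail the paper leaves implicit.
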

%%%%%%%%%%%%%%%%%%
\begin{proof}
%%%
Consider the commutative diagram
$$
\begin{CD}
\Map^*(\RP^m,S^{2n+1}) @>{\gamma_n^{\C}}_{\#}>\simeq> \Map_0^*(\RP^m,\CP^n)
\\
@V{{\gamma_{2n+1}}_{\#}}V{\simeq}V \Vert @.
\\
\Map_0^*(\RP^m,\RP^{2n+1}) @>{\mu_n}_{\#}>> \Map_0^*(\RP^m,\CP^n)
\end{CD}
$$
Since ${\gamma_{2n+1}}_{\#}$ and ${\gamma_n^{\C}}_{\#}$ are homotopy equivalences
by Lemma \ref{lemma: Hopf}, 
the assertion easily follows from the above commutative diagram.
%%%
\end{proof}
%%End of proof of Corollary 4.5)%%%

Now we can prove Theorem \ref{thm: II} and Corollary \ref{cor: III}.

%%(Proof of Theorem 1.5)%%%%
\begin{proof}[Proof of Theorem \ref{thm: II}]
%%%
First, it is easy to see
the diagram 
%%%(16)%%%
\begin{equation}\label{basic CD}
%%%%
\begin{CD}
A_d^{\R}(m,2n+1) @>j_d^{\R}>> \Map^*(\RP^m,S^{2n+1})
\\
@V\psi_{m,n}V{\cong}V \Vert @.
\\
A_d^{\C}(m,n) @>j_d^{\C}>> \Map^*(\RP^m,\CP^n)
\end{CD}
\end{equation}
%%%%%
is commutative.
Since $j_d^{\R}$ is a homotopy equivalence through dimension
$D_{\R}(d;m,2n+1)$ if $m<2n$ and a homology equivalence through
dimension $D_{\R}(d;m,2n+1)$ if $m=2n$,
by Theorem \ref{thm: AKY1-II},so is  the map
$j_d^{\C}$.
Because $D_{\C}(d;m,n)=D_{\R}(d;m,2n+1)$,
we see that the map $j_d^{\C}$ is a homotopy equivalence through
dimension $D_{\C}(d;m,n)$ if $m<2n$ and a homology equivalence
through dimension $D_{\C}(d;m,n)$ if $m=2n$.
It remains to show that the same holds for map $i_d^{\C}$.
However, this follows easily from the facts that
$i_d^{\C}={\gamma_n^{\C}}_{\#}\circ j_d^{\C}$ and 
${\gamma_n^{\C}}_{\#}$ is a homotopy equivalence
(by Lemma \ref{lemma: Hopf}).
This completes the proof of Theorem \ref{thm: II}.
%%%%
\end{proof}
%%(End of proof of Theorem 1.5)%%%%

%%%(Proof of Corollary 1.6)%%
\begin{proof}[Proof of Corollary \ref{cor: III}]
%%%%
Since $j_{d+2}^{\C}\circ s_d=j_d^{\C}$, the assertion easily follows from
Theorem \ref{thm: II}.
%If we consider the commutative diagram
%$$
%\begin{CD}
%A^{\C}_d(m,n) @>{j_d^{\C}}>> \Map^*(\RP^m,S^{2n+1})
%\\
%@V{s_d}VV \Vert @.
%\\
%A^{\C}_d(m,n) @>{j_{d+2}^{\C}}>> \Map^*(\RP^m,S^{2n+1})
%\end{CD}
%$$
%the assertion easily follows from Theorem  \ref{thm: II}.
%%
\end{proof}
\section{The stabilized space $A_{\infty+\epsilon}^{\C}(m,n)$.  }
Although we cannot prove Conjecture \ref{conj: Psi}, we can prove the following
stabilized version.

%\par\vspace{3mm}\par
%{\bf Definition. }
%%%%%%%%%%%%
%%(Definition 5.1)%%
\begin{defi}
{\rm
For $\epsilon =0$ or $1$, let $A^{\C}_{\infty +\epsilon}(m,n)$ denote the
stabilized space
$\dis A^{\C}_{\infty +\epsilon}(m,n)=
\lim_{k\to\infty}A_{2k+\epsilon}^{\C}(m,n)$, where
the limit is taken over  the stabilization maps
$$
A_{\epsilon}^{\C}(m,n)\stackrel{s_{\epsilon}}{\longrightarrow}
A_{2+\epsilon}^{\C}(m,n)\stackrel{s_{2+\epsilon}}{\longrightarrow}
A_{4+\epsilon}^{\C}(m,n)\stackrel{s_{4+\epsilon}}{\longrightarrow}
A_{6+\epsilon}^{\C}(m,n)\stackrel{s_{6+\epsilon}}{\longrightarrow}
\cdots
$$
From the commutative diagram
$$
\begin{CD}
 @>>> A^{\C}_{2k+\epsilon}(m,n) @>s_{2k+\epsilon}>> A^{\C}_{2k+2+\epsilon}(m,n)
@>s_{2k+2+\epsilon}>> \cdots
\\
@. @V{\Psi_{2k+\epsilon}^{\C}}VV @V{\Psi^{\C}_{2k+2+\epsilon}}VV @.
\\
 @>>> \Alg^*_{2k+\epsilon}(\RP^m,\CP^n) @>\subset>>
\Alg^*_{2k+2+\epsilon}(\RP^m,\CP^n)
@>\subset>> \cdots
\end{CD}
$$
we obtain a stabilized map
$\dis
\Psi^{\C}_{\infty +\epsilon}=
\lim_{k\to\infty}\Psi_{2k+\epsilon}^{\C}:
A^{\C}_{\infty +\epsilon}(m,n)\to \Alg^*_{\epsilon}(m,n).$
%%%
}
\end{defi}

%%(Stabilized version)%%
%%%(Proposition 5.2)%%%%
\begin{prop}\label{prop: infty}
%%%
If  $2\leq m\leq 2n$ and $\epsilon =0$,
the map 
$$
\Psi_{\infty +0}^{\C}:
A^{\C}_{\infty +0}(m,n)\stackrel{\simeq}{\rightarrow} 
\Alg^*_{0}(m,n)
\simeq \Map_0^*(\RP^m,\CP^n)
$$
is a homotopy equivalence if $m<2n$ and a homology equivalence
if $m=2n$.
\end{prop}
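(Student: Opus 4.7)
The plan is to reduce Proposition~\ref{prop: infty} to Theorems~\ref{thm: stable} and~\ref{thm: II} by passing to the colimit along the stabilizations $s_d$. The factorization $i_d^{\C}=i_{d,\C}\circ\Psi_d^{\C}$, together with the commutative square relating $s_d$, $\Psi_d^{\C}$ and $\Psi_{d+2}^{\C}$ recorded in the introduction, shows that the maps $\Psi_{2k}^{\C}$ and $i_{2k,\C}$ assemble into a single commutative diagram in the colimit,
\begin{equation*}
\begin{CD}
A_{\infty+0}^{\C}(m,n) @>\Psi_{\infty+0}^{\C}>> \Alg_{0}^{*}(m,n) @>i_{\infty+0,\C}>> \Map_{0}^{*}(\RP^m,\CP^n)
\end{CD}
\end{equation*}
whose composite equals the colimit $i_{\infty+0}^{\C}$ of the maps $i_{2k}^{\C}$.

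By Theorem~\ref{thm: stable} the map $i_{\infty+0,\C}$ is a homotopy equivalence, so by the two-out-of-three property it suffices to show that $i_{\infty+0}^{\C}$ is a homotopy equivalence when $m<2n$ and a homology equivalence when $m=2n$. Theorem~\ref{thm: II} already gives this conclusion in a finite range: $i_{2k}^{\C}$ is an equivalence in the appropriate sense through dimension $D_{\C}(2k;m,n)=(2n-m+1)(k+1)-1$, which tends to infinity with $k$. Hence for each fixed $q$ the induced map on $\pi_q$ (respectively $H_q$) becomes an isomorphism once $k$ is large enough.

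The point that requires care is that $\pi_*$ and $H_*$ should commute with the sequential colimit defining $A_{\infty+0}^{\C}(m,n)$. This is routine: each stabilization $s_d$, being multiplication in each coordinate by the fixed positive polynomial $\tilde{g}_m=\sum_{k=0}^{m}z_k^{2}$, restricts to a closed embedding of the affine ambient spaces $\mathcal{H}_d^{0}\times(\mathcal{H}_d^{1})^n$, so $\{A_{2k}^{\C}(m,n)\}$ is a telescope of closed inclusions for which homotopy and homology commute with the colimit. Combining this with the previous paragraph, $i_{\infty+0}^{\C}$ induces isomorphisms on all $\pi_q$ when $m<2n$ and on all $H_q$ when $m=2n$, whence the same is true of $\Psi_{\infty+0}^{\C}$; since both source and target have the homotopy type of CW complexes, a weak equivalence here is a genuine homotopy equivalence. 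The main obstacle is therefore only this colimit bookkeeping together with the CW condition on $A_{\infty+0}^{\C}(m,n)$: once they are in place, the proposition is a formal consequence of the two input theorems.
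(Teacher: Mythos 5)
Your argument is correct and is essentially the paper's own proof: both pass to the limit of the maps $i_{2k}^{\C}$ (equivalences through dimensions $D_{\C}(2k;m,n)\to\infty$ by Theorem~\ref{thm: II}, equivalently Corollary~\ref{cor: IV}), invoke Theorem~\ref{thm: stable} for the inclusion $\Alg_0^*(m,n)\hookrightarrow\Map_0^*(\RP^m,\CP^n)$, and conclude by two-out-of-three in the commutative diagram. The only difference is that you spell out the colimit bookkeeping (closed inclusions, $\pi_*$ and $H_*$ commuting with the telescope, CW type), which the paper leaves implicit.
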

%%%
%\begin{proof}
%%%
\par
{\it Proof. }
The assertion easily follows from
Theorem \ref{thm: stable}, Corollary  \ref{cor: IV} and the commutative diagram
$$
\begin{CD}
A^{0}_{\infty}(m,n) @>\lim_{k}i_{2k}^{\C}>\simeq> \Map_0^*(\RP^m,\CP^n)
\\
@V{\Psi_{\infty +0}^{\C}}VV \Vert @.
\\
\Alg^*_{0}(m,n) @>i>\simeq> \Map_0^*(\RP^m,\CP^n)
\qed
\end{CD}
$$
%\end{proof}
%%%(End of proof of Proposition 5.2)

%%(SECTION 6)%%%%
\section{The space $\Alg^{*}_1(\RP^m,\CP^n)$.}
%%%%%%%%%%%%%

In this section we investigate the homotopy  of 
$A_1^{\C}(m,n)\cong \Alg_1^*(\RP^m,\RP^n)$.

%%%%%%%%
%\par\vspace{2mm}\par
%{\bf Definition. }
%%%%(Definition 6.1)%%%
\begin{defi}
{\rm
For integers $1\leq m\leq 2n$, let
$V_{2n+1,m}$ denote the real Stiefel manifold  of all orthogonal $m$-frames
in $\R^{2n+1}$ and 
$({\bf b}_1,\cdots ,{\bf b}_m)\in V_{2n+1,m}$ any element such that
${\bf b}_k=\ ^t(b_{k,1},\cdots ,b_{k,2n+1})\in\R^{2n+1}$ $(1\leq k\leq m)$.
Consider
the $(n+1)$-tuple of polynomial
defined by
$$
(f_0,\cdots ,f_n)=(z_0,\cdots ,z_m)
\left(\begin{array}{cccccc}
1 & 0 & 0 &\cdots & 0 & 0
\\
\sqrt{-1}{\bf b} & {\bf c}_1 & {\bf c}_2 & \cdots & {\bf c}_{n-1} & {\bf c}_n
\end{array}
\right),
$$
%%%%%
where 
${\bf b}\in \R^m$ and ${\bf c}_k\in \C^m$ $(k=1,2,\cdots ,n)$ are given by
$$
\begin{cases}
{\bf b}=& ^{t}(b_{1,1},b_{1,2},b_{1,3},\cdots .b_{1,m}),
\\
{\bf c}_k=& ^t(b_{2k,1}+\sqrt{-1}b_{2k+1,1},b_{2k,2}+\sqrt{-1}b_{2k+1,2},\cdots ,b_{2k,m}+\sqrt{-1}b_{2k+1,m})
\end{cases}
$$
%%%%%%
Since it is easy to see that $(f_0,\cdots ,f_n)\in A_1^{\C}(m,n)$, one can define the map
$\varphi_{m,n}:V_{2n+1,m}\to A_1^{\C}(m,n)$ by
$\varphi_{m,n}({\bf b}_1,\cdots ,{\bf b}_m)=(f_0,\cdots ,f_n)$.
}
\end{defi}

%%(Lemma 6.2)%%
\begin{lemma}\label{lemma: V}
%%%%%%%%%%
If $1\leq m\leq 2n$, the map
$\varphi_{m,n}:V_{2n+1,m}\stackrel{\simeq}{\rightarrow}
A_1^{\C}(m,n)$ is a homotopy equivalence.
\end{lemma}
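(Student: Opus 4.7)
The plan is to parametrize $A_1^{\C}(m,n)$ by coordinates, translate the no-common-real-root condition into a rank condition, identify the resulting space as $\R^m\times W_{m,n}$ with $W_{m,n}$ the noncompact real Stiefel manifold of $m$-frames in $\R^{2n+1}$, and then deformation retract to $V_{2n+1,m}$ via Gram--Schmidt, verifying that $\varphi_{m,n}$ realizes this equivalence.

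Concretely, an element of $A_1^{\C}(m,n)$ is the $(m+1)\times(n+1)$ coefficient matrix $M$ of $(f_0,\dots,f_n)$, whose first row is $(1,0,\dots,0)$. Write its lower $m\times(n+1)$ block as $(\alpha\mid A)$ with $\alpha=\alpha_R+\sqrt{-1}\,\alpha_I\in\C^m$ and $A=A_R+\sqrt{-1}\,A_I\in\C^{m\times n}$ (real and imaginary parts), so $f_0=z_0+\sum_k\alpha_kz_k$ and $f_j=\sum_kA_{k,j}z_k$ for $j\ge 1$. For a nonzero $\mathbf{x}=(x_0,\mathbf{x}')\in\R^{m+1}$ to be a common real root, $f_0(\mathbf{x})=0$ forces $x_0=-\alpha_R\cdot\mathbf{x}'$ together with $\alpha_I\cdot\mathbf{x}'=0$, while $f_j(\mathbf{x})=0$ for $j\ge 1$ is equivalent to $A_R^T\mathbf{x}'=A_I^T\mathbf{x}'=0$. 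Hence $(f_0,\dots,f_n)\in A_1^{\C}(m,n)$ exactly when the real matrix $R=(\alpha_I\mid A_R\mid A_I)\in\R^{m\times(2n+1)}$ has rank $m$, and since $\alpha_R$ does not enter this condition, we obtain a homeomorphism
\[
A_1^{\C}(m,n)\ \cong\ \R^m\times W_{m,n},\qquad W_{m,n}=\bigl\{R\in\R^{m\times(2n+1)}:\operatorname{rank}R=m\bigr\}.
\]

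Projection onto $W_{m,n}$ is a homotopy equivalence (the $\R^m$ factor being contractible), and row-wise Gram--Schmidt provides a standard $O(m)$-equivariant deformation retraction $W_{m,n}\simeq V_{2n+1,m}$. Unpacking the definition, the composite $V_{2n+1,m}\xrightarrow{\varphi_{m,n}} A_1^{\C}(m,n)\to W_{m,n}\to V_{2n+1,m}$ is the self-homeomorphism of $V_{2n+1,m}$ induced by a fixed permutation of the $2n+1$ standard coordinates of $\R^{2n+1}$ (the permutation dictated by how the entries $b_{k,\ell}$ of each frame vector are distributed among the columns $\sqrt{-1}\mathbf{b},\mathbf{c}_1,\dots,\mathbf{c}_n$). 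Since this composite is a homeomorphism and the second arrow of it is a homotopy equivalence, $\varphi_{m,n}$ itself must be a homotopy equivalence. The main point requiring care is the real/imaginary decomposition of the vanishing of $f_0,\dots,f_n$ on $\R^{m+1}$, but this is routine linear algebra once the parametrization is in place.
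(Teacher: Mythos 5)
Your proof is correct and follows essentially the same route as the paper: reduce membership in $A_1^{\C}(m,n)$ to the condition that the real $m\times(2n+1)$ matrix of imaginary/real parts has rank $m$, split off the contractible $\R^m$ factor coming from $\alpha_R$, and compare with the Stiefel manifold. You are in fact slightly more careful than the paper, which asserts outright that $V_{2n+1,m}\times\R^m\to A_1^{\C}(m,n)$ is a homeomorphism (with $V_{2n+1,m}$ compact); your insertion of the noncompact Stiefel manifold $W_{m,n}$ and the Gram--Schmidt retraction is the honest version of that step.
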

%%%%%
\begin{proof}
%%%
Let us consider the element
$(f_0,\cdots ,f_n)\in\C [z_0,\cdots ,z_m]^{n+1}$ of the form
%%%()%%
\begin{equation}\label{matrix}
(f_0,\cdots ,f_n)=
(z_0,\cdots ,z_m)
\left(
\begin{array}{cccc}
1 & 0 & \cdots & 0
\\
a_{1,0} & a_{1,1} & \cdots & a_{1,n}
\\
%a_{2,0} & a_{2,1} & \cdots & a_{2,n}
%\\
\vdots & \vdots & \vdots & \vdots
\\
a_{m,0} & a_{m,1} & \cdots & a_{m,n}
\end{array}
\right),
\end{equation}
%%%%
where $a_{k,j}=b_{k,j}+\sqrt{-1}c_{k,j}$ $(b_{k,j},c_{k,j}\in\R)$
and we write
$$
\begin{cases}
{\bf a}=\ ^t(b_{1,0},b_{2,0},\cdots ,b_{m,0})\in \R^m
\\
B=\big(b_{k,j}\big)_{1\leq k\leq m,1\leq j\leq n},\
C=\big(c_{k,j}\big)_{1\leq k\leq m,0\leq j\leq n}.
\end{cases}
$$
It is easy to see that
the polynomials $f_0,\cdots ,f_n$ have no common
 {\it real} root beside zero if and only if
 the equation
$$
(z_0,z_1,\cdots ,z_m)
\left(\begin{array}{cccccccc}
1 & 0 & 0 & 0 &   \cdots & 0 & 0
\\
b_{1,0}&c_{1,0} & b_{1,1} & c_{1,1} &   \cdots & b_{1,n} & c_{1,n}
\\
%b_{2,0}&c_{2,0} & b_{2,1} & c_{2,1} &   \cdots & b_{2,n} & c_{2,n}
%\\
\vdots &\vdots & \vdots  & \vdots  &    \vdots & \vdots  & \vdots 
\\
b_{m,0}&c_{m,0} & b_{m,1} & c_{m,1} &   \cdots & b_{m,n} & c_{m,n}
\end{array}
\right)=
\left(
\begin{array}{c}
0
\\
0
\\
\vdots
%\\
%\vdots
\\
0
\end{array}
\right)
$$
has no  non-zero solution.
So we see that
$(f_0,\cdots ,f_n)\in A_1^{\C}(m,n)$ if and only if 
the $\big(m\times (2n+1)\big)$-matrix $\big(B, C)$ has rank $m$.
Thus the map
$\Phi : V_{2n+1,m}\times \R^m \to A_1^{\C}(m,n)$
given by $( B,C,{\bf a})\mapsto (f_0,\cdots ,f_n)$
is clearly a homeomorphism, where $(f_0,\cdots ,f_n)$ is defined by
(\ref{matrix}).
%%%%%
Since $\varphi_{m,n}=\Phi \vert  V_{2n+1,m}\times \{{\bf 0}_m\},$
the map
$\varphi_{m,n}$ is a homotopy equivalence.
\end{proof}
%%%%(End of proof of Lemma 6.2)%%%%

%%(Lemma 6.3)%%
\begin{lemma}\label{lemma: 1-connected}
%%%
$\I$
If $2\leq m \leq 2n$,  $\Map_1^*(\RP^m,\CP^n)$ is $(2n-m)$-connected.
\par
$\II$
If $1\leq m\leq 2n$, $V_{2n+1,m}$ is $(2n-m)$-connected.
\par
$\III$
If $1\leq m \leq 2n$,
$\pi_{2n-m+1}(V_{2n+1,m})=
\begin{cases}
\Z & \mbox{if }\ m\equiv 1\  \mo ,
\\
\Z /2 & \mbox{if }\ m\equiv 0\  \mo .
\end{cases}$
\end{lemma}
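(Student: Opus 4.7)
The plan is to prove each of the three parts by induction on $m$, with a different fibration driving each case.

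For (i), I would induct on $m \geq 2$. \emph{Base case $m = 2$}: applying $\Map^*(-, \CP^n)$ to the cofibre sequence $S^1 \xrightarrow{2} S^1 \to \RP^2$ gives the homotopy fibre sequence
\[
\Map^*(\RP^2, \CP^n) \to \Omega\CP^n \xrightarrow{2^*} \Omega\CP^n.
\]
The map $2^*$ is the squaring map on the H-space $\Omega\CP^n$, hence an H-map, and its homotopy fibre inherits an H-space structure; consequently the two path components $\Map_0^*(\RP^2, \CP^n)$ and $\Map_1^*(\RP^2, \CP^n)$ are mutually homotopy equivalent. A direct computation from the long exact sequence, using the vanishing $\pi_j(\CP^n) = 0$ for $3 \leq j \leq 2n$ and the injectivity of $\times 2 : \pi_2(\CP^n) = \Z \to \Z$, shows that the basepoint component is $(2n-2)$-connected, and hence so is $\Map_1^*(\RP^2, \CP^n)$. \emph{Inductive step $m \geq 3$}: the restriction map $H^2(\RP^m; \Z) \to H^2(\RP^{m-1}; \Z)$ is an isomorphism of $\Z/2$'s, so restriction carries $\Map_1^*(\RP^m, \CP^n)$ into $\Map_1^*(\RP^{m-1}, \CP^n)$, giving the fibration
\[
\Omega^m\CP^n \to \Map_1^*(\RP^m, \CP^n) \to \Map_1^*(\RP^{m-1}, \CP^n).
\]
The base is $(2n-m+1)$-connected by the inductive hypothesis, the fibre is $(2n-m)$-connected since $\pi_j(\CP^n) = 0$ for $3 \leq j \leq 2n$, and the long exact sequence delivers the conclusion.

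For (ii), use the classical fact that $V_{N, k}$ is $(N-k-1)$-connected, which is a one-line induction on $k$ via the fibration $V_{N-1, k-1} \to V_{N, k} \to S^{N-1}$; specializing to $(N, k) = (2n+1, m)$ gives the claim.

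For (iii), apply the companion fibration $V_{2n, m-1} \hookrightarrow V_{2n+1, m} \to S^{2n}$ (projection to the first frame vector). For $m \geq 3$, both $\pi_{2n-m+1}(S^{2n})$ and $\pi_{2n-m+2}(S^{2n})$ vanish, so the long exact sequence at degree $2n-m+1$ gives
\[
\pi_{2n-m+1}(V_{2n+1, m}) \cong \pi_{2n-m+1}(V_{2n, m-1}),
\]
reducing to a Stiefel manifold whose value of $N - k$ is still $2n - m + 1$, so the parity matches and induction finishes the case. The base cases are direct: $V_{2n+1, 1} = S^{2n}$ gives $\Z$ (in agreement with $m = 1$ odd), and for $V_{2n+1, 2}$ the unit-tangent-sphere-bundle fibration $S^{2n-1} \to V_{2n+1, 2} \to S^{2n}$ has connecting homomorphism equal to multiplication by $\chi(S^{2n}) = 2$, yielding $\Z/2$ (in agreement with $m = 2$ even).

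The principal obstacle is in the base case $m = 2$ of part (i): one must argue that \emph{both} components of $\Map^*(\RP^2, \CP^n)$ are $(2n-2)$-connected, not merely the one containing the basepoint. The H-space structure on the homotopy fibre of the H-map $2^*$ is the cleanest device for this; the remaining pieces (the inductive step of (i), and parts (ii) and (iii)) are standard LES computations.
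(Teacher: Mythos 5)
Parts (ii) and (iii) of your plan are correct, and the inductive step of (i) is exactly the paper's argument (restriction fibration over $\RP^{m-1}$). The genuine problem is the device you yourself single out in the base case $m=2$ of (i): the claim that $2^*$ is an H-map \emph{because it is the squaring map on the H-space} $\Omega\CP^n$. Squaring on an H-space is an H-map only up to homotopy commutativity (one must deform $abab$ into $aabb$); equivalently, the degree two map on $S^1$ is not a co-H map (in $\pi_1(S^1\vee S^1)$ the two composites give $abab$ and $a^2b^2$), so precomposition $2^*$ has no reason to respect the loop multiplication on $\Omega\CP^n$, and homotopy commutativity of $\Omega\CP^n$ is a genuinely delicate question (it already fails for $n=1$, where the Samelson product corresponding to $[\iota_2,\iota_2]=2\eta\neq 0$ obstructs it). Moreover, even granted an H-structure on the homotopy fibre, concluding that \emph{all} of its path components are homotopy equivalent requires it to be group-like and sufficiently associative that translations are homotopy equivalences; and the assertion $\Map_0^*(\RP^2,\CP^n)\simeq\Map_1^*(\RP^2,\CP^n)$ is stronger than anything the paper proves or needs. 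So, as written, the base case has a gap. Fortunately the detour is unnecessary: the long exact sequence of your own fibration $\Map^*(\RP^2,\CP^n)\to\Omega\CP^n\stackrel{2^*}{\to}\Omega\CP^n$ (or of the restriction fibration $\Omega^2\CP^n\to\Map^*(\RP^2,\CP^n)\to\Omega\CP^n$, which is what the paper uses) may be based at a map lying in the degree-one component; using $\pi_j(\CP^n)=0$ for $3\le j\le 2n$ together with the injectivity of multiplication by $2$ on $\pi_1(\Omega\CP^n)\cong\Z$ (in the paper's version: the connecting map $\Z\to\Z$ is onto, hence injective), one reads off directly that $\Map_1^*(\RP^2,\CP^n)$ is $(2n-2)$-connected, with no comparison of components needed.

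For (ii) and (iii) your route differs harmlessly from the paper's. The paper inducts along $S^{2n-m+1}\to V_{2n+1,m}\to V_{2n+1,m-1}$ and settles the even case of (iii) by the $Sq^1$-action on $H^*(V_{2n+1,m};\Z/2)$, identifying the relevant skeleton with $S^{2n-m+1}\cup_2 e^{2n-m+2}$; you instead descend along $V_{2n,m-1}\to V_{2n+1,m}\to S^{2n}$ and finish the even case with the Euler-number $2$ of the unit tangent bundle of an even sphere, which is if anything more elementary. Only make explicit that the required vanishing $\pi_{2n-m+1}(S^{N-1})=\pi_{2n-m+2}(S^{N-1})=0$ holds at \emph{every} stage of the descent (it needs the frame length to be at least $3$ at each application, which is the case since $N-k$ stays equal to $2n-m+1$), so the induction terminates correctly at $V_{\ast,1}$ or $V_{\ast,2}$.
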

%%%%%%
\begin{proof}
(i)
Consider the restriction fibration sequence
%%(18)%%
\begin{equation}\label{res-fib}
\Omega^m\CP^n \stackrel{\hat{j}}{\longrightarrow}
 \Map_1^*(\RP^m,\CP^n)\stackrel{r}{\longrightarrow}\Map_1^*(\RP^{m-1},\CP^n).
\end{equation}
%%%
We prove the assertion (i) by induction on $m$.
First, consider the case $m=2$.
Since $\pi_1(\Omega^2\CP^n)=0$ and $\Map_1^*(\RP^1,\CP^n)=\Omega\CP^n$,
taking $m=2$  in (\ref{res-fib}) and using the induced exact sequence
$$
0\to
\pi_1(\Map_1^*(\RP^2,\CP^n))\stackrel{r_*}{\longrightarrow}
\pi_1(\Omega\CP^n)=\Z\stackrel{\partial}{\rightarrow}
\Z=\pi_0(\Omega^2\CP^n)\to 0,
$$
we see that $\pi_1(\Map^*_1(\RP^2,\CP^n))=0$.
Since $\pi_k(\Omega^2\CP^n)\cong\pi_{k+2}(S^{2n+1})=0$ 
and $\pi_k(\Omega\CP^n)\cong\pi_{k+1}(S^{2n+1})=0$ for $2\leq k\leq 2n-2$,
applying (\ref{res-fib}) with $m=2$
we see that $\pi_k(\Map_1^*(\RP^2,\CP^n))=0$
for $2\leq k\leq 2n-2$.
Hence, the case $m=2$ has been proved.
%%%
Next,  assume that $\Map^*_1(\RP^{m-1},\CP^n)$ is
$(2n-m+1)$-connected for some $m\geq 3$.
Since $3\leq m <2n$, $\Omega^m\CP^n=\Omega^mS^{2n+1}$ is
$(2n-m)$-connected. 
Hence, from (\ref{res-fib})
we easily deduce that $\Map^*_1(\RP^m,\CP^n)$ is $(2n-m)$-connected.
We have proved (i).
\par
%%(ii)%%
(ii)
The assertion (ii) can be easily
proved by induction on $m$ by making use of the following
fibration sequence:
%%(19)%%
\begin{equation}\label{fibV}
S^{2n-m+1} \to V_{2n+1,m} \to V_{2n+1,m-1}.
\end{equation} 
We omit the details. 
\par
%%(iii)%%%
(iii)
First, let $m\equiv 0$ $\mo$.
It is known that $H^{2n-m+2}(V_{2n+1,m},\Z/p)=0$ for any
odd prime $p\geq 3$,
$$
H^*(V_{2n+1,m},\Z/2)=E[x_j:2n-m+1\leq j\leq 2n]
\quad (\vert x_j\vert =j)
$$
and that $Sq^1(x_{2n-m+1})=x_{2n-m+2}$.
Hence, the $(2n-m+2)$-skeleton of
$V_{2n+1,m}$ is $S^{2n-m+1}\cup_2e^{2n-m+2}$
(up to homotopy equivalence), and we have
$\pi_{2n-m+1}(V_{2n-m+1})= \Z/2.$
If $m\equiv 1$ $\mo$, (\ref{fibV})
induces the  exact sequence
$$
\pi_{N+1}(V_{2n+1,m-1})=\Z/2\stackrel{\partial}{\rightarrow}
\pi_{N}(S^{2n-m+1})=\Z\to
\pi_{N}(V_{2n+1,m}) \to 0,
$$
where $N=2n-m+1$.
Hence, $\pi_{2n-m+1}(V_{2n+1,m})\cong\Z$.
Thus (iii) has also been proved.
%%%%%%
\end{proof}
%%(End of Proof of Lemma 6.3)%%

%%%(Corollary 6.4)%%%%
\begin{cor}\label{cor: pi1}
%%%
If $1\leq m \leq 2n$, 
$\Alg_1^*(\RP^m,\CP^n)$ is $(2n-m)$-connected and
$$
\pi_{2n-m+1}(\Alg_1^*(\RP^m,\CP^n))=
\begin{cases}
\Z & \mbox{if }\ m\equiv 1\  \mo ,
\\
\Z /2 & \mbox{if }\ m\equiv 0\  \mo .
\end{cases}
$$
\end{cor}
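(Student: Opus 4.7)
The plan is to chain together the three identifications already established so that the computation of the homotopy groups of $\Alg_1^*(\RP^m,\CP^n)$ reduces to the homotopy groups of a Stiefel manifold, which are worked out in Lemma \ref{lemma: 1-connected}. There is essentially no new content beyond assembling these pieces.

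First I would recall that the projection $\Psi_1^{\C}\colon A_1^{\C}(m,n)\to \Alg_1^*(\RP^m,\CP^n)$ is a homeomorphism, as noted in the paragraph preceding Theorem \ref{thm: V}; indeed, for $d=1$ there is no room for rescaling fibres by a non-trivial positive homogeneous polynomial, so each algebraic map of degree $1$ has a unique representative in $A_1^{\C}(m,n)$. Hence
$$\Alg_1^*(\RP^m,\CP^n)\;\cong\;A_1^{\C}(m,n)$$
as topological spaces, and it suffices to compute the connectivity and $\pi_{2n-m+1}$ of $A_1^{\C}(m,n)$.

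Next I would invoke Lemma \ref{lemma: V}, which produces a homotopy equivalence
$$\varphi_{m,n}\colon V_{2n+1,m}\xrightarrow{\simeq} A_1^{\C}(m,n).$$
Composing these two identifications gives a homotopy equivalence $V_{2n+1,m}\simeq \Alg_1^*(\RP^m,\CP^n)$, so the assertions about connectivity and $\pi_{2n-m+1}$ of $\Alg_1^*(\RP^m,\CP^n)$ are translated into the corresponding assertions for $V_{2n+1,m}$.

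Finally, parts $\II$ and $\III$ of Lemma \ref{lemma: 1-connected} give exactly what we need: $V_{2n+1,m}$ is $(2n-m)$-connected, and $\pi_{2n-m+1}(V_{2n+1,m})$ is $\Z$ when $m$ is odd and $\Z/2$ when $m$ is even. Substituting these into the homotopy equivalence established in the previous step yields the stated corollary. Since every step is a direct appeal to previously proved results, no serious obstacle is expected; the only care required is to verify that the identification $\Psi_1^{\C}$ is a homeomorphism (an easy consequence of the minimal-degree uniqueness for $d=1$, already recorded in \eqref{restriction}), so that homotopy groups can be transferred.
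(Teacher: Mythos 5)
Your proposal is correct and follows exactly the paper's own argument: identify $\Alg_1^*(\RP^m,\CP^n)$ with $A_1^{\C}(m,n)$ via $\Psi_1^{\C}$, use the homotopy equivalence $\varphi_{m,n}\colon V_{2n+1,m}\to A_1^{\C}(m,n)$ of Lemma \ref{lemma: V}, and then read off the connectivity and $\pi_{2n-m+1}$ from Lemma \ref{lemma: 1-connected}. No issues.
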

\begin{proof}
Since there is a homotopy equivalence
$V_{2n+1,m}\simeq \Alg_1^*(\RP^m,\CP^n)$, the assertion
follows from Lemma \ref{lemma: 1-connected}.
\end{proof}
%%(End of Corollary 6.4)%%%

%%
%%(Definition 6.5)%%%%%%
%\par\vspace{2mm}
%\par
%{\bf Definition. }
\begin{defi}
{\rm
For $1\leq m\leq 2n$,
define the map $\tilde{i}_m:V_{2n+1,m}\to \Map_1^*(\RP^m,\CP^n)$
by
%%( )%%
%\begin{equation}\label{tilde-i}
%%%%
$\tilde{i}_m=i_1^{\C}\circ \varphi_{m,n}.$
%\end{equation}
%%%%%%
}
\end{defi}

For $2\leq m\leq 2n$, it is easy to verify that the following diagram is commutative
%%(20)%%
\begin{equation}\label{fibCD}
%%%%
\begin{CD}
S^{2n-m+1} @>\tilde{j}>> V_{2n+1,m} @>>> V_{2n+1,m-1}
\\
@V{\hat{s}_m}VV @V{\tilde{i}_m}VV @V{\tilde{i}_{m-1}}VV
\\
\Omega^m \CP^n @>{\hat{j}}>>
\Map_1^*(\RP^m,\CP^n) @>r>>
\Map_1^*(\RP^{m-1},\CP^n)
\end{CD}
\end{equation}
%%%
where we identify
$S^{2n-m+1}\cong \frac{O(2n+2-m)}{O(2n+1-m)}$ and the two rows are
fibration sequences.

%%(Lemma 6.6)%%%
\begin{lemma}\label{lemma: s}
%%%%%%%%
If $2\leq m\leq 2n$,
the map $\hat{s}_m:S^{2n-m+1}\to \Omega^m\CP^n$ is a homotopy equivalence
up to dimension $D_{\C}(1;m,n)=4n-2m+1$.
\end{lemma}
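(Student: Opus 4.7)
\textit{Proof plan.} The strategy is to reduce the claim to showing $\hat s_m$ induces $\pm 1$ on $\pi_{2n-m+1}$, after which Freudenthal does the rest. Since $m\ge 2$, looping the Hopf fibration $S^1\to S^{2n+1}\to \CP^n$ down $m$ times yields a weak equivalence $\Omega^m S^{2n+1}\simeq \Omega^m\CP^n$ (because $\Omega^m S^1\simeq *$ for $m\ge 2$), so $\Omega^m\CP^n$ is $(2n-m)$-connected with $\pi_{2n-m+1}(\Omega^m\CP^n)\cong \pi_{2n+1}(S^{2n+1})=\Z$. The iterated suspension $\Sigma^m:S^{2n-m+1}\to \Omega^m S^{2n+1}$ is the adjoint of the identity of $S^{2n+1}$ and hence carries a generator of $\pi_{2n-m+1}$ to a generator; by iterating Freudenthal, it is an isomorphism on $\pi_k$ for $k\le 2(2n-m+1)-2=4n-2m$ and a surjection on $\pi_{4n-2m+1}$, i.e.\ a homotopy equivalence up to dimension $D_{\C}(1;m,n)=4n-2m+1$. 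Since $S^{2n-m+1}$ is a sphere, any two based maps into $\Omega^m\CP^n$ are homotopic iff they represent the same element of $\pi_{2n-m+1}(\Omega^m\CP^n)$. Thus, once $(\hat s_m)_*:\Z\to \Z$ is shown to be $\pm 1$, one has $\hat s_m\simeq \pm\Sigma^m$ and the lemma follows.

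\textit{Degree calculation.} To verify the $\pm 1$ claim, I would fix the standard $(m-1)$-frame $(e_1,\ldots,e_{m-1})$ as basepoint of $V_{2n+1,m-1}$ and parametrize the fiber $S^{2n-m+1}\subset V_{2n+1,m}$ by $v$ in the unit sphere of $\mathrm{span}(e_m,\ldots,e_{2n+1})$. Substituting into the matrix formula for $\varphi_{m,n}$, one checks that $\varphi_{m,n}(e_1,\ldots,e_{m-1},v):\RP^m\to \CP^n$ restricts to the $v$-independent map $\mu_n|_{\RP^{m-1}}$ on $\RP^{m-1}$, so that the family descends to a based map into the fiber $\Omega^m\CP^n$. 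With $u_k=v_{2k}+\sqrt{-1}v_{2k+1}$, the $z_m$-dependent part of $\varphi_{m,n}(e_1,\ldots,e_{m-1},v)$ is $(u_1 z_m,\ldots,u_n z_m)$, up to a small shift at the middle index when $m$ is odd. Passing to the adjoint $S^m\wedge S^{2n-m+1}=S^{2n+1}\to \CP^n$ and identifying the smash with the unit sphere of $\C^{n+1}$ via $(z,v)\mapsto (z_0+\sqrt{-1}z_1,\,z_m u_1,\ldots,z_m u_n)$ (whose norm squared is $|z_0+\sqrt{-1}z_1|^2+z_m^2\sum_k|u_k|^2=1$), one recognises the adjoint as the Hopf projection $S^{2n+1}\to \CP^n$, which is a generator of $\pi_{2n+1}(\CP^n)\cong \Z$. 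This gives $(\hat s_m)_*=\pm 1$, as required.

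\textit{Main obstacle.} The delicate step is the bookkeeping in the degree computation: one must identify the fiber of the restriction fibration $\Map^*_1(\RP^m,\CP^n)\to \Map^*_1(\RP^{m-1},\CP^n)$ with $\Omega^m\CP^n$ relative to the chosen extension $\mu_n|_{\RP^m}$, handle the asymmetry between even and odd $m$ around the index $(m-1)/2$, and rigorously match the adjoint with the Hopf projection up to pointed homotopy. Once these identifications are in place, the lemma is immediate from the reduction and the Freudenthal range established at the outset.
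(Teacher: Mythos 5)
Your proposal follows essentially the same route as the paper: reduce to showing $\hat{s}_{m*}$ carries a generator of $\pi_{2n-m+1}(S^{2n-m+1})\cong\Z$ to a generator of $\pi_{2n-m+1}(\Omega^m\CP^n)\cong\Z$, identify $\hat{s}_m$ up to homotopy with the $m$-fold suspension $E^m$, and invoke Freudenthal to get the range $4n-2m+1$. The only difference is that the paper outsources the generator computation to the method of [Y12, Lemma 3.1], whereas you sketch it directly via the explicit frame parametrization; your sketch of that step is plausible and correctly flags the remaining bookkeeping.
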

%%%
\begin{proof}
By means of a method similar to the one used in the proof of
[\cite{Y12}, Lemma 3.1]
we can show that 
$\hat{s}_{m*}:\pi_{2n-m+1}(S^{2n-m+1})
\stackrel{\cong}{\rightarrow}\pi_{2n-m+1}(\Omega^mS^{2n+1})$
$\cong
\pi_{2n-2m+1}(\Omega^m\CP^n)$
is an isomorphism.
Thus we can identify  $\hat{s}_m$ with the $m$-fold suspension 
 $E^m :S^{2n-m+1}\to\Omega^mS^{2n+1}\simeq \Omega^m\CP^n$
(up to homotopy equivalence).
Hence $\hat{s}_m$ is a homotopy equivalence up to dimension
$4n-2m+1$.
\end{proof}
%%(End of Lemma 6.6)%%%

%%(Lemma 6.7)%%
\begin{lemma}\label{lemma: m1}
%%%
If $n\geq 2$, %the induced homomorphism
$\tilde{i}_{1*}:\pi_k(V_{2n+1,1})\to \pi_k(\Map_1^*(\RP^1,\CP^n))$
is an isomorphism for any $2\leq k <4n-1$ and an epimorphism
for $k=4n-1$.
\end{lemma}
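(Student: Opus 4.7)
The plan is to show that $\tilde i_1$ is homotopic, up to sign, to $\Omega\gamma_n^{\C}\circ E$, where $E:S^{2n}\to \Omega S^{2n+1}$ is the Freudenthal suspension, and then combine Freudenthal's suspension theorem with the Hopf isomorphism. This mirrors the strategy used for $\hat s_m$ in Lemma \ref{lemma: s}.

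First I would unwind the definition of $\varphi_{1,n}$. For ${\bf b}=(b_1,\dots,b_{2n+1})\in V_{2n+1,1}=S^{2n}$, the resulting algebraic degree-$1$ map is $[z_0+\sqrt{-1}b_1z_1:(b_2+\sqrt{-1}b_3)z_1:\cdots:(b_{2n}+\sqrt{-1}b_{2n+1})z_1]:\RP^1\to\CP^n$. Parametrising $\RP^1$ by $[\cos\theta:\sin\theta]$ with $\theta\in[0,\pi]$, a direct check shows $\sum_{k=0}^n|f_k|^2\equiv 1$, so the corresponding loop in $\CP^n$ admits a canonical continuous lift $\tilde F:S^{2n}\times[0,\pi]\to S^{2n+1}$. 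Since $\tilde F({\bf b},0)=(1,0,\dots,0)$ and $\tilde F({\bf b},\pi)=(-1,0,\dots,0)$ independently of $\bf b$, this descends to a continuous map $\tilde F:\Sigma S^{2n}\to S^{2n+1}$. Inverting the defining formulas (extracting $\theta$ from the real part of the first coordinate and then ${\bf b}$ from the remaining coordinates divided by $\sin\theta$) exhibits $\tilde F$ as a homeomorphism, hence a map of degree $\pm 1$.

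Next I would use the loop-suspension adjunction to compare $\tilde i_1$ with $\Omega\gamma_n^{\C}\circ E$. The adjoint of $\tilde i_1$ is, by construction, $\gamma_n^{\C}\circ\tilde F$, while the adjoint of $\Omega\gamma_n^{\C}\circ E$ is $\gamma_n^{\C}\circ{\rm id}_{S^{2n+1}}=\gamma_n^{\C}$. Since $\tilde F$ has degree $\pm 1$ and, for $n\geq 2$, the Hopf fibration induces an isomorphism $\pi_{2n+1}(S^{2n+1})\stackrel{\cong}{\to}\pi_{2n+1}(\CP^n)\cong\Z$, both adjoints represent generators of $\pi_{2n+1}(\CP^n)$. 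Thus $\tilde i_1$ and $\Omega\gamma_n^{\C}\circ E$ agree up to sign as elements of $\pi_{2n}(\Omega\CP^n)=[S^{2n},\Omega\CP^n]_*$, and are therefore homotopic as based maps.

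The conclusion now follows by composing two standard facts. Freudenthal's suspension theorem gives that $E_*:\pi_k(S^{2n})\to\pi_k(\Omega S^{2n+1})=\pi_{k+1}(S^{2n+1})$ is an isomorphism for $k<4n-1$ and an epimorphism for $k=4n-1$. The long exact sequence of the Hopf fibration (using $n\geq 2$ so that $\pi_{k+1}(S^1)=0$ for all relevant $k$) shows that $(\Omega\gamma_n^{\C})_*:\pi_{k+1}(S^{2n+1})\to\pi_{k+1}(\CP^n)$ is an isomorphism for all $k\geq 2$. Composing yields the stated behaviour of $\tilde i_{1*}$. The most delicate point is the verification that $\tilde F:\Sigma S^{2n}\to S^{2n+1}$ is a homeomorphism of degree $\pm 1$; everything else is a formal consequence of adjunction, Freudenthal, and the Hopf long exact sequence. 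The restriction $k\geq 2$ is also essential, since $\pi_1(V_{2n+1,1})=0$ while $\pi_1(\Map_1^*(\RP^1,\CP^n))=\pi_2(\CP^n)=\Z$.
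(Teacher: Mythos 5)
Your argument is correct, but it is genuinely different from the one in the paper. The paper proves this lemma by projecting onto the first factor of the splitting $\Map_1^*(\RP^1,\CP^n)=\Omega\CP^n\simeq \Omega S^{2n+1}\times S^1$ and identifying $q_1\circ\tilde i_1$ with the map $S^{2n}\simeq Q^1_{(2n+1)}(\R)\to\Omega S^{2n+1}$ of \cite{KY1}, importing from [\cite{KY1}, Corollary 5] the fact that this map is a homotopy equivalence up to dimension $N(1,2n+2)=4n-1$; the dimension bound is thus a citation, not a computation. You instead make the geometry explicit: the identity $\sum_{k=0}^n|f_k|^2\equiv 1$ on the unit circle shows that the adjoint of $\tilde i_1$ is exactly $\gamma_n^{\C}\circ\tilde F$ with $\tilde F:\Sigma S^{2n}\to S^{2n+1}$ the standard join homeomorphism $({\bf b},\theta)\mapsto(\cos\theta,\sin\theta\cdot{\bf b})$, so $\tilde i_1$ agrees up to sign with $\Omega\gamma_n^{\C}\circ E$, and the range $2\leq k<4n-1$ (with surjectivity at $k=4n-1$) drops out of the Freudenthal suspension theorem together with the long exact sequence of the Hopf fibration. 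What your route buys is a self-contained proof that exposes why the critical dimension is the Freudenthal bound $4n-1$, essentially putting this lemma on the same footing as the paper's treatment of $\hat s_m$ in Lemma \ref{lemma: s}; what the paper's route buys is brevity and consistency with the resultant-space framework of \cite{KY1}. Two small points worth tightening: $\tilde i_1$ does not carry the basepoint of $S^{2n}$ to the constant loop, so the adjunction should be phrased via free homotopy classes, which is harmless because $\Omega\CP^n$ is a connected H-space (free and based classes coincide and the $\pi_1$-action is trivial); and the vanishing $\pi_{k+1}(S^1)=\pi_k(S^1)=0$ you need comes from the restriction $k\geq 2$, not from $n\geq 2$.
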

%%%%%%
\begin{proof}
%%%%
After identifications (up to homotopy equivalence) $V_{2n+1,1}=S^{2n}$ and
$\Map_1^*(\RP^1,\CP^n)=\Omega \CP^n= \Omega S^{2n+1}\times S^1$,
the map $\tilde{i}_1$ can be viewed as a map
$\tilde{i}_1:S^{2n}\to\Omega\CP^n$.
%%%%%
Let $q_1 :\Omega\CP^n=\Omega S^{2n+1}\times S^1\to \Omega S^{2n+1}$ denote
the projection onto the first factor.
%%%
Note that the composite
$q_1\circ \tilde{i}_1$ can be identified with the map
$S^{2n}\simeq Q^1_{(2n+1)}(\R)\to \Omega S^{2n+1}$
given in [\cite{KY1}, Corollary 5 (2)].
Hence it follows from [\cite{KY1}, Corollary 5] that $q_1\circ \tilde{i}_1$
 is a homotopy equivalence up to dimension
$N(1,2n+2)=4n-1$.
Recalling  that
 ${q_1}_*:\pi_l(\Omega \CP^n)\stackrel{\cong}{\rightarrow}\pi_l(\Omega S^{2n+1})$
is an isomorphism for all $l\geq 2$,
we see that $\pi_k(\tilde{i}_1)$ is an isomorphism for any $2\leq k <4n-1$ and
an epimorphism for $k=4n-1$.
\end{proof}
%%%(End of Proof of Lemma 6.7)%%

%%(Proposition 6.8)%%
\begin{prop}\label{prop: V}
%%%%%%%%%%%%%%%%%%%%%
If $2\leq m\leq 2n$, the map
$\tilde{i}_m:V_{2n+1,m}\to \Map_1^*(\RP^m,\CP^n)$
is a homotopy equivalence up to dimension $D_{\C}(1;m,n)=4n-2m+1$
\end{prop}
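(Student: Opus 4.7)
The plan is induction on $m\ge 2$, comparing the two fibration sequences in diagram (\ref{fibCD}) via the Five Lemma applied to their induced long exact sequences in homotopy, with the vertical maps $\hat s_m$, $\tilde i_m$, and $\tilde i_{m-1}$ as the comparison data.

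The two feeding inputs are exactly what the preceding lemmas supply. Lemma~\ref{lemma: s} asserts that $\hat s_m$ is a homotopy equivalence up to dimension $D_{\C}(1;m,n)=4n-2m+1$, and the inductive hypothesis asserts that $\tilde i_{m-1}$ is one up to dimension $4n-2(m-1)+1=4n-2m+3$, exactly two larger. With these in hand, a standard Five Lemma chase in the range $k<4n-2m+1$ yields an isomorphism on $\pi_k(\tilde i_m)$, while the Four Lemma variant at the top dimension $k=4n-2m+1$ (using that $\pi_{k+1}(\tilde i_{m-1})$ and $\pi_k(\tilde i_{m-1})$ are isomorphisms, that $\pi_k(\hat s_m)$ is surjective, and that $\pi_{k-1}(\hat s_m)$ is an isomorphism) yields surjectivity. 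For the base case $m=2$ one uses Lemma~\ref{lemma: m1} in place of the inductive hypothesis.

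The main subtlety to watch is purely low-dimensional. Lemma~\ref{lemma: m1} gives $\tilde i_{1,*}$ an isomorphism only for $2\le k<4n-1$; at $k=1$ it is the zero map $0=\pi_1(S^{2n})\to\pi_1(\Omega\CP^n)=\Z$. This gap does not disturb the induction because Lemma~\ref{lemma: 1-connected} shows that both $V_{2n+1,m}$ and $\Map_1^*(\RP^m,\CP^n)$ are $(2n-m)$-connected, so $\pi_k(\tilde i_m)$ is automatically an isomorphism for $k\le 2n-m$. The Five Lemma does genuine work only in the range $2n-m+1\le k\le 4n-2m+1$, which lies in $k\ge 2$ whenever $m\le 2n-1$, and that is precisely where Lemma~\ref{lemma: m1} delivers isomorphisms. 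The remaining boundary case $m=2n$ has $D_{\C}(1;2n,n)=1$, so only surjectivity of $\tilde i_{2n,*}$ on $\pi_1$ is needed; both $\pi_1(V_{2n+1,2n})$ and $\pi_1(\Map_1^*(\RP^{2n},\CP^n))$ equal $\Z/2$ by Lemma~\ref{lemma: 1-connected}(iii) and Lemmas~\ref{lemma: simply}(ii) and~\ref{lemma: Hopf}(ii), and a direct diagram chase in (\ref{fibCD}) extracts surjectivity from this matching.
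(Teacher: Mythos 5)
Your proposal is correct and takes essentially the same route as the paper: induction on $m$ via the Five Lemma applied to the map of fibration sequences (\ref{fibCD}), with Lemma \ref{lemma: s} controlling the fibre comparison $\hat{s}_m$ and Lemma \ref{lemma: m1} seeding the base case $m=2$. Your additional care about the failure of $\tilde{i}_1$ on $\pi_1$ and about the boundary case $m=2n$ only makes explicit what the paper handles implicitly (via the simple connectivity of both sides for $m=2$).
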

\begin{proof}
%%%
The proof proceeds by  induction on $m$.
First, consider the case $m=2$.
Since
$\pi_1(V_{2n+1,2})=\pi_1(\Map_1^*(\RP^2,\CP^n))=0$
by Lemma \ref{lemma: 1-connected},  the map
$\tilde{i}_2$ induces an isomorphism on $\pi_1(\ )$.
Hence it suffices to show that
$\tilde{i}_{2*}:\pi_k(V_{2n+1,2})\to \pi_k(\Map_1^*(\RP^2,\CP^n))$
is an isomorphism for any $2\leq k<D_{\C}(1;2,n)=4n-1$ and
an epimorphism for $k=D_{\C}(1;2,n)$.
However, recalling the commutative diagram (\ref{fibCD}) for $m=2$,
and using the Five Lemma,
 Lemma \ref{lemma: s} and Lemma \ref{lemma: m1}, we see that
 $\pi_k(\tilde{i}_2)$ is an isomorphism for any $2\leq k<4n-1=D_{\C}(1;2,n)$ and
 an epimorphism for $k=4n-1=D_{\C}(1;2,n)$.
Hence, the case $m=2$ is proved.
\par
Now assume that the map $\tilde{i}_{m-1}$ is a homotopy equivalence up to
dimension $D_{\C}(1;m-1,n)=4n-2m+3$ for some $m\geq 3$.
Since, by Lemma \ref{lemma: s}, $\hat{s}_m$ is a homotopy equivalence up to dimension $D_{\C}(1;m,n)=4n-2m+1$, the Five Lemma and the diagram (\ref{fibCD}) imply that
the map $\tilde{i}_m$ is a homotopy equivalence up to dimension $D_{\C}(1;m,n)$.
%%%
\end{proof}
%%(End of proof of Proposition 6.8)%%%
%
%%(Lemma 5.6)%%
%\begin{lemma}\label{lemma: V2}
%%%%%%%%%%
%If $1\leq m \leq 2n$,
%$\pi_{2n-m+1}(V_{2n+1,m})=
%\begin{cases}
%\Z & \mbox{if }\ m\equiv 1\  \mo
%\\
%\Z /2 & \mbox{if }\ m\equiv 0\  \mo
%\end{cases}$
%\end{lemma}
%%%%%%%%%%%
%\begin{proof}
%%%%%%
%First, consider the case $m\equiv 0$ $\mo$.
%In this case, it is known that $H^{2n-m+2}(V_{2n+1,m},\Z/p)=0$ for any
%odd prime $p\geq 3$,
%$$
%H^*(V_{2n+1,m},\Z/2)=E[x_j:2n-m+1\leq j\leq 2n]
%\quad (\vert x_j\vert =j)
%$$
%and that $Sq^1(x_{2n-m+1})=x_{2n-m+2}$.
%Hence, the $(2n-m+2)$-skeleton of
%$V_{2n+1,m}$ is $S^{2n-m+1}\cup_2e^{2n-m+2}$
%(up to homotopy equivalence), and we have
%$\pi_{2n-m+1}(V_{2n-m+1})= \Z/2.$
%If $m\equiv 1$ $\mo$ and  set $N=2n-m+1$, by using (\ref{fibV})
%we have the  exact sequence
%$$
%\pi_{N+1}(V_{2n+1,m-1})=\Z/2\stackrel{\partial}{\rightarrow}
%\pi_{N}(S^{2n-m+1})=\Z\to
%\pi_{N}(V_{2n+1,m}) \to 0.
%$$
%Hence, $\pi_{2n-m+1}(V_{2n+1,m})\cong\Z$.
%This completes the proof.
%%%%%
%\end{proof}
%%(End of proof of Lemma 5.6)%%%

%%(Proof of Theorem 1.8)%%
\par
{\it Proof of Theorem \ref{thm: V}. }
%%%%%%%
(i)
Since $i_1^{\C}=i_{1,\C}\circ \Psi_1^{\C}$ and the projection map
$\Psi_1^{\C}:A_1^{\C}(m,n)
\stackrel{\cong}{\rightarrow} \Alg_1^*(\RP^m,\CP^n)$
is a homeomorphism, it suffices to show that
$i_1^{\C}$ is a homotopy equivalence up to dimension $D_{\C}(1;m,n).$
As $\tilde{i}_m=i^{\C}_1\circ \varphi_{m,n}$ and
$\varphi_{m,n}$ is a homotopy equivalence,
this follows from Proposition \ref{prop: V}.
Hence, (i) has been proved.
\par
%%(ii)%%
(ii)
Since
$\hat{s}_{2n*}:\pi_1(S^1)\to \pi_1(\Omega^{2n}\CP^n)$ is an
epimorphism by Lemma \ref{lemma: s}
and $\pi_1(S^1)\cong\Z\cong\pi_1(\Omega\CP^n)$,
$\hat{s}_{2n}$ induces an isomorphism on $\pi_1(\ )$.
Consider the following commutative diagram 
of the exact sequences induced from 
(\ref{fibCD}) for $m=2n$
$$
\begin{CD}
\pi_2(V_{2n+1,2n-1}) @>\partial>>\pi_1(S^1) @>>> \pi_1(V_{2n+1,2n})=\Z/2@>>> 0
\\
@V{\tilde{i}_{2n-1*}}V{\cong}V @V{\hat{s}_{2n*}}V{\cong}V 
@V{\tilde{i}_{2n*}}VV @.
\\
\pi_2(\Map_1^*) @>\partial^{\p}>> \pi_1(\Omega\CP^n) @>>> \pi_1(\Map_1^*(\RP^{2n},\CP^n)) @>>> 0
\end{CD}
$$
where $\Map_1^*=\Map_1^*(\RP^{2n-1},\CP^n)$.
Since, by Proposition \ref{prop: V}, the induced homomorphism
$\tilde{i}_{2n-1*}:\pi_2(V_{2n+1,2n-1})
\stackrel{\cong}{\rightarrow}\pi_2(\Map_1^*(\RP^{2n-1},\CP^n))$
is an isomorphism, $\tilde{i}_{2n}$ induces an isomorphism on $\pi_1(\ )$.
\qed
%%End of Proof of Theorem 1.8)%%

%%(Proof of Corollary 1.9)%%
\begin{proof}[Proof of Corollary \ref{cor: V}]
%%%%
The assertion (i)  follows from Lemma \ref{lemma: simply}
and Theorem \ref{thm: II}.
The assertion (ii) also easily follows from Corollary \ref{cor: M0-homotopy},
Lemma \ref{lemma: V}, Lemma \ref{lemma: 1-connected},
Proposition \ref{prop: V}
and Theorem \ref{thm: V}.
%%
%%%%%%
\end{proof}
%%(End of proof of Corollary 1.9)%%

%    Bibliographies can be prepared with BibTeX using amsplain,
%    amsalpha, or (for "historical" overviews) natbib style.
\bibliographystyle{amsplain}
%    Insert the bibliography data here.

%%%(References)%%%%%%%

%%%%%%%%%%%%%%%%%%%%%%%%%%

\end{document}